\newtheorem{theo}{Theorem}[section]
\newtheorem{cor}[theo]{Corollary}
\newtheorem{prop}[theo]{Proposition}
\newtheorem{lem}[theo]{Lemma}
\theoremstyle{remark}
\newcommand{\R}{{\mathbb{R}}}
\newcommand{\N}{{\mathbb{N}}}
\newcommand{\C}{{\mathbb{C}}}
\newcommand{\Z}{{\mathbb{Z}}}
\newcommand{\Q}{{\mathbb{Q}}}
\newcommand{\la}{\lambda} 
\newcommand{\al}{\alpha} 
\newcommand{\be}{\beta} 
\newcommand{\Om}{\Omega} 
\newcommand{\ga}{\gamma}
\newcommand{\om}{\omega} 
\newcommand{\Si}{\Sigma} 
\newcommand{\si}{\sigma} 
\newcommand{\Ga}{\Gamma}
\newcommand{\De}{\Delta} 
\newcommand{\T}{\mathbb{T}}
\newcommand{\op}{\operatorname}
\newcommand{\mo}{\mathcal{M}}
\newcommand{\Ci}{\mathcal{C}^{\infty}}
\newcommand{\con}{\overline} 
\newcommand{\bigo}{\mathcal{O}}
\newcommand{\id}{\operatorname{id}}
\begin{document}
\author{Laurent Charles \footnote{Institut de
    Math{\'e}matiques de Jussieu-Paris give gauche (UMR 7586), Universit{\'e} Pierre et
    Marie Curie, Paris, F-75005 France.}}
 
\title{On the Witten asymptotic conjecture for Seifert manifolds} 

\maketitle

\begin{abstract} 
We give a new proof of Witten asymptotic conjecture for Seifert manifolds with non vanishing Euler class and one exceptional fiber. Our method is based on semiclassical analysis on a two dimensional phase space torus. We prove that the Witten-Reshetikhin-Turaev invariant of a Seifert manifold is the scalar product of two Lagrangian states, and we estimate this scalar product in the large level limit. The leading order terms of the expansion are naturally given in terms of character varieties, the Chern-Simons invariants and some symplectic volumes. For the analytic part, we establish some singular stationary phase lemma for discrete oscillatory sums. 
\end{abstract}

Witten asymptotic conjecture sets up a bridge between quantum and classical invariants. It predicts that the Witten-Reshetikhin-Turaev invariant of a closed 3-manifold $M$ for a given group compact $G$ has an asymptotic expansion in the large level limit, the leading terms of this expansion being function of the Chern-Simons invariant, the Reidemeister torsion and the spectral flow of the representation of $\pi_1(M)$ into $G$. This conjecture has been recently settled for some hyperbolic manifolds \cite{LJ1}, \cite{LJ2} and \cite{oim_MCG}. It was also checked for many Seifert manifolds \cite{FrGo}, \cite{Je1}, \cite{Ro2}, \cite{Ro}, \cite{LaRo}, \cite{Han}, \cite{HaTa}, \cite{hi2}, \cite{BeWi}, \cite{An}, \cite{AnHi}

In this paper, we come back to the Seifert case and propose a new approach, more conceptual than the previous ones. 
In particular, the character variety of $M$ and the Chern-Simons invariant appear naturally in the proof. Furthermore, we compute the leading terms of the amplitudes corresponding to irreducible representations as symplectic volumes of some moduli spaces, which is a new result. In the companion paper \cite{LL}, it is proved that these symplectic volumes are actually integrals of Reidemeister torsion.

To explain briefly our strategy, assume that $M$ is obtained by gluing some solid tori $T_1$, \ldots, $T_n$ to $\Si \times S^1$ where $\Si$ is a compact connected orientable surface with $n$ boundary components. The WRT invariant of $\Si \times S^1$ is an element of a vector space associated to the boundary $\partial \Si \times S^1$. This vector space may be viewed as the quantization of the character variety of $ \partial \Si \times S^1$. We will deduce from Verlinde formula and Riemann-Roch theorem that the WRT invariant of $\Si \times S^1$ is a Lagrangian state supported by the character variety of $\Si \times S^1$. Similarly, it is known that the WRT invariant of $T_1 \cup \ldots \cup T_n$ is also a Lagrangian state. So the WRT invariant of $M$ is the scalar product of two Lagrangian states. This can be estimated by using some pairing formula, when the Lagrangian submanifolds supporting the states intersect transversally. This transversality assumption is satisfied when $M$ has a non vanishing Euler number. Unfortunately, some difficulties arise because the character varieties of $\Si \times S^1$ has singularities, which lead us to estimate some singular oscillatory sums. 
For the sake of simplicity, we restrict ourselves in this paper to Seifert manifolds with one exceptional fiber and the group $\op{SU}(2)$.


\subsubsection*{Statement of the main result}

Let $S$ be the oriented Seifert manifold with unnormalized invariant $(g$; $(a,b))$, where $g$ is an integer $\geqslant 2$ and $a$, $b$ are two coprime integers such that $a\neq0 $ and $b \geqslant 1$. Recall that $S$ is obtained from a genus $g$ oriented surface $\Si$ with boundary a circle $C$, by gluing a solid torus $T$ to $\Si \times S^1$ in such a way that the homology class of a meridian of the boundary of $T$ is sent to $a [C ] + b [S^1]$.

Let $\mo (S)$ be the space of conjugacy classes of group morphisms from $\pi_1 (S)$ to $\op{SU}(2)$. Let $X = \pi_0 (\mo (S))$ be the set of connected components. Introduce the functions $\al , \be : \mo (S) \rightarrow [0, \pi ]$ given by 
$$ \al ( [\rho] )  =  \arccos \bigl( \tfrac{1}{2} \op{tr} ( \rho (C)) \bigr), \quad  \be ([ \rho] )  =  \arccos \bigl( \tfrac{1}{2} \op{tr} ( \rho (S^1)) \bigr) $$
These functions are actually constant on each component of $\mo (S)$. We denote again by $\al$ and $\be$ the induced maps from $X$ to $[0, \pi ]$. They allow to distinguish between the components, that is the joint map $(\al, \be) : X \rightarrow [0, \pi ]^2$ is one-to-one.

 We say that a component of $\mo (S)$ is abelian if it contains only abelian representations, and irreducible otherwise. 
We will divide $X$ into 4 disjoint subsets: $X_1 $ is the set of abelian components, whereas $X_2$, $X_3$, $X_4$ are the sets of irreducible components $x$ such that $\al (x) \neq 0,1$,  $\al (x) = 0$, $\al ( x) =1$ respectively. All the components in the same $X_i$ have the same dimension. In table \ref{tab:xi}, we indicate for each $i$ the possible values of $\al$ and $\be$ on $X_i$, the cardinal of $X_i$ and the dimension of its elements. 

\begin{table} 
\renewcommand\arraystretch{1.1}
\begin{tabular}{|l|c|c|c|c|c|}  
\hline 
  & $\al$ & $\be$ & type & $\#$ & $\frac{1}{2}$dimension \\
\hline $X_1$ & 0 & $\neq 0,\pi$ & abelian &  $\op{E} ( \frac{b-1}{2} )$ & $g$ \\
\hline $X_2$ & $\neq 0, \pi$ &  0 or $\pi$ & irreducible & $|a|-1$ & $3g-2$ \\
\hline $X_3$ & 0 &  0 or $\pi$ & irred/abelian &  2 if $b$ is even  & $3g-3$ \\ 
& & & & 1 otherwise & \\
\hline $X_4$ & 1 &  0 or $\pi$ & irreducible & 1 if $b$ is odd & $3g-3$ \\
& & & & 0 otherwise & 
\\ \hline
\end{tabular}
\caption{Characteristics of the components of $\mo (S)$} \label{tab:xi}
\end{table}

For any $\rho \in \mo (S)$, we denote by $\op{CS} ( \rho ) \in \R / 2 \pi \Z$ the Chern-Simons invariant of $\rho$, cf. Equation (\ref{eq:defrhox}) for a precise definition. The Chern-Simons function $\op{CS}$ is constant on each component of $\mo (S)$. We denote again by $\op{CS}$ the induced map from $X$ to $\R / 2 \pi \Z$

For any $t \in [0,1]$, let $\mo ( \Si, t)$ denote the space of conjugacy classes of group morphism $\rho$ from $\pi_1 (\Si)$  to $\op{SU}(2)$ such that $\op{tr} ( \rho (C) ) = \cos ( 2 \pi t)$. This space is a symplectic manifold. We denote by $v_g(t)$ its symplectic volume. 
  
 
\begin{theo} \label{theo:main-result}
We have the full asymptotic expansion
\begin{xalignat*}{2}
 Z_k ( S) = & \sum_{x \in X} e^{ik \op{CS} (x) } k^{n(x)} \sum_{\ell \in \N} k^{-\ell} a_{\ell} (x) + \sum_{x \in X_3}   k^{m(x)} \sum_{\ell \in \N} k^{-\ell} b_{\ell} (x) + 
 \bigo ( k^{-\infty})
\end{xalignat*}
where the coefficients $a_{\ell} (x)$, $b_{\ell} (x)$  are complex numbers. The exponents $n(x)$ and the leading coefficients $a_0 (x)$ are given in table \ref{tab:coef} according to whether $x$ belong to $X_1$, $X_2$, $X_3$ or $X_4$. Furthermore, if $x \in X_3$ 
$$ m(x) = 2g - \tfrac{3}{2}, \qquad  b_0 (x)  = e^{i \pi/4} i^g b^{g - 3/2} a ^{-g}  \pi^{-g +1} \frac{\sqrt 2 ( g-1)!}{ ( 2 (  g-1))!} .$$ 
\end{theo}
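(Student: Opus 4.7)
The plan is to realize $Z_k(S)$ as a pairing of two Lagrangian states on the quantization of the character variety $\mo(\partial T)$ of the torus $\partial T = \partial(\Si\times S^1)$, and then to estimate this pairing by a singular stationary phase in the large-$k$ limit. More concretely, using the Verlinde formula together with Riemann--Roch on the moduli space $\mo(\Si, t)$, I would first write
\[ Z_k(\Si\times S^1) = \sum_{j} u_k(j) \, e_j \]
for the standard basis $(e_j)$ of the quantum Hilbert space associated to the boundary torus, with amplitudes $u_k(j)$ of WKB (Lagrangian state) type whose phase comes from the Chern--Simons functional and whose symbol involves $v_g(j/k)$. The WRT invariant of the solid torus $T$ is the classical elementary Lagrangian state supported on the meridian curve in the character variety of $\partial T$. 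Gluing $T$ to $\Si\times S^1$ along the prescribed $a[C]+b[S^1]$ framing then expresses $Z_k(S)$ as a finite oscillatory sum over $j$, whose continuous limit is the pairing of the two Lagrangians in the character variety of $\partial T$.

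Second, I would analyse this oscillatory sum by stationary phase. The critical points of the phase are exactly the points where the Lagrangian supporting $Z_k(\Si\times S^1)$ and the one supporting $Z_k(T)$ intersect, and by the standard holonomy picture these are precisely the components $x\in X$ of $\mo(S)$. The value of the phase at the critical point corresponding to $x$ is $\op{CS}(x)$, which produces the oscillating factor $e^{ik\,\op{CS}(x)}$. At a transverse intersection point (cases $x\in X_1, X_2, X_4$), the usual stationary phase lemma gives a contribution of order $k^{n(x)}$ whose leading coefficient $a_0(x)$ is a product of the symbol $v_g(\al(x)/(2\pi))$ with a Hessian-determinant Gaussian factor, and comparing exponents of $k$ shows $n(x)=\tfrac12\dim x$ up to a shift by the dimension of the phase space; the tables then follow by bookkeeping in each case.

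The main obstacle, and the reason the theorem has the extra $b_\ell(x)$ terms, is the case $X_3$: here $\al(x)=0$, so the two Lagrangians fail to be transverse because the character variety of $\Si\times S^1$ is singular along the locus of reducible representations with $\rho(C)=\pm I$. I would address this by proving, and then applying, a \emph{singular} discrete stationary phase lemma for sums of the form $\sum_j f(j/k)\,e^{ik\phi(j/k)}$ where $f$ and $\phi$ extend only as continuous functions with a square-root type singularity at $j/k=0$ (coming from the Jacobian between the Weyl chamber coordinate and the symplectic coordinate on $\mo(\Si,t)$). Separating the regular contribution (which gives the usual $k^{n(x)}$ term with $n(x)=3g-3$ and amplitude $a_0(x)$ written as a symplectic volume) from the singular contribution (which produces a fractional power) is the technical heart; the fractional power is what yields $m(x)=2g-\tfrac32$.

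Finally, the explicit constant $b_0(x)=e^{i\pi/4}i^g b^{g-3/2} a^{-g}\pi^{-g+1}\sqrt{2}(g-1)!/(2(g-1))!$ should drop out of the singular stationary phase formula: the factor $i^g$ is the Maslov-type contribution from the Hessian of $\phi$ along $b[S^1]$, the $b^{g-3/2}$ and $a^{-g}$ come from the change of framing $a[C]+b[S^1]$ entering the quadratic form, the $\pi^{-g+1}$ and the factorial ratio arise from the Taylor coefficient of $v_g(t)$ at $t=0$ (one can check that $v_g(t)\sim c_g\, t^{2g-3}$ with $c_g=2\pi^{-2g+2}(g-1)!/(2g-2)!$ times the factor from the Weyl character formula), and $e^{i\pi/4}$ is the standard Gauss-sum phase from the one-dimensional oscillatory integral of $e^{ik\phi}$ at a half-integer-order degenerate critical point. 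Matching these contributions in closed form against the expected $b_0(x)$ is a direct, if delicate, computation once the singular stationary phase lemma is in place.
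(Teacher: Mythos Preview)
Your overall strategy---realize $Z_k(S)$ as the pairing of two Lagrangian states on the quantized torus $\mo(\partial T)$, then estimate by (singular) stationary phase over the intersection points indexed by $X$---is exactly the paper's approach, and your treatment of $X_1,X_2,X_4$ is correct in outline.

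Your diagnosis of the $X_3$ case, however, is off in a way that would derail the computation. The issue at $\al(x)=0$ is \emph{not} that the Lagrangian $B$ supporting $Z_k(T)$ fails to be transverse to the Lagrangian supporting $Z_k(\Si\times S^1)$; it is that the latter Lagrangian is itself singular there. Concretely, the image of $\mo(\Si\times S^1)$ in the torus is the union of two circles $A_1=\{x=0\}$ (abelian branch) and $A_2=\{y=0\}\cup\{y=1/2\}$ (irreducible branch), and these cross at the $X_3$ points. The state $Z_k(\Si\times S^1)$ is Lagrangian on neighborhoods of $A_1\setminus A_2$ and of $A_2\setminus A_1$ separately (via the Verlinde formula after discrete Fourier transform, and via Riemann--Roch, respectively), but admits no Lagrangian description near the crossing. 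Consequently there is no single phase $\phi$ with a ``square-root type singularity'' to which one can apply a continuous stationary phase lemma.

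What the paper actually does near $X_3$ is work directly with the coefficients: the Verlinde numbers satisfy $N^{g,k}_\ell=(k/2\pi)^{3g-2}\sum_{m}k^{-2m}P_{g,m}(\ell/k)$ with $P_{g,m}$ \emph{polynomials}, and the solid-torus state contributes a Gaussian phase $e^{2i\pi k(a/b)q^2}$. One splits each $P_{g,m}$ into its even and odd parts and applies two distinct discrete singular stationary phase lemmas (for $\sum_{\ell\ge 0}e^{i\al\ell^2/2k}f(\ell/k)$ and for the alternating analogue). The key algebraic input is that the even part of $P_{g,m}$ is the single monomial $\la_{g,m}x^{2(g-m-1)}$; in particular your claimed $v_g(t)\sim c_g t^{2g-3}$ is not the relevant expansion---the $b_0(x)$ term comes from the even part $P_{g,0}^+(x)=\la_{g,0}\,x^{2g-2}$ with $\la_{g,0}=2C_g(2\pi)^{3g-2}/(2(g-1))!$, while the $a_0(x)$ term comes from the odd part via $P_{g,0}'(0)$. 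Establishing this monomial structure requires a separate analysis of the discrete Fourier transform of $\sin^{-(2g-1)}$, which is the missing ingredient in your sketch.
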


\begin{table} 
\[
\renewcommand\arraystretch{1.7}
\begin{array}{|l|c|c|}  
\hline 
   & n(x) & a_0 (x)  \\
\hline
X_1 & g -1/2 &    \frac{2 \pi^{g - 1/2} }{ \sqrt{b}}  \bigl[ \sin ( \al (x) ) \bigr] ^{-2g + 1} \sin \bigl( c \al (x)  \bigr) \\
\hline
X_2 &   3g -2 & \frac{1 }{  \sqrt{a}}  v_{g} \bigr( \be (x) /\pi \bigl) \sin \bigl( d \be(x)   \bigr)  \\ 
\hline
X_3  & 3g -3 &  i(4 \pi)^{-1}  a^{-3/2}   v_g'(0) \\
\hline 
X_4  & 3g -3 &   (4 \pi)^{-1} a^{-3/2} v'_{g} (1) \\
\hline 
\end{array}
\]
\caption{The leading exponents and coefficients} \label{tab:coef}
\end{table}

The novelty in this result is the expression of the leading coefficients $a_{0} (x)$ and $b_0 (x)$. In the paper \cite{LL}, it is proved that $a_0 (x)$ is actually equal to the integral of a Reidemeister torsion, in agreement with the Witten asymptotic conjecture.  
Comparing Tables \ref{tab:xi} and \ref{tab:coef}, we also see that the exponent $n(x)$ is equal to half the dimension for an irreducible component, and half the dimension minus $1/2$ for the abelian components, in agreement with the prediction in \cite{FrGo}. 

All the components of $\mo (S)$ are smooth manifolds except the ones in $X_3$ which have a singular stratum of abelian representations. For these singular components, we have an additional term in the asymptotic expansion, the series $\sum k^{-\ell} b_{\ell} (x)$.  We don't know any geometric expression for $b_0 (x)$ or $m_0(x)$. For instance, since the singular strata have the same dimension as the abelian component in $X_1$, we could expect that $m(x)$ is equal to $ n(y)$, $y \in X_1$; but it is not the case. 
As a last remark, we haven't tried to determine the signs of the coefficients and to understand the spectral flow contribution.

\subsubsection*{Comparison with earlier results} 
The main reference on the subject is certainly the article \cite{Ro2} by Rozansky, where the author first shows that $Z_k (S)$ has an asymptotic expansion, the contribution of irreducible representations being presented in a residue form. In a second part, by comparing various expansions of path integrals, these residues are formally identified with Riemann-Roch number of moduli spaces, cf. Conjecture 5.1 and Proposition 5.3 of \cite{Ro2}. The connection with Theorem \ref{theo:main-result} is that these Riemann-Roch number are approximated at first order by the symplectic volumes $v_g(t)$. 

To compare with the previous articles on Witten asymptotic conjecture for Seifert manifolds, our proof has the advantage that the set $X = \pi_0 ( \mo (S))$ and the various invariants appear naturally. To the contrary, in most papers on the subject, it is first proved that $Z_k (S)$ is a sum of oscillatory terms with an explicit computation of the phases. Independently, one determines the set $X$ and the corresponding Chern-Simons invariants. After that, a one-to-one correspondence between the components of $\mo (S)$ and the various terms of the asymptotic expansion is established, such that the Chern-Simons invariant of a component is equal to the corresponding phase. 

The identification between the amplitudes and the Reidemeister torsion has been done in  a few cases similarly by comparing the results of two independent computations. One exception is the paper \cite{AnHi}, where everything is computed intrinsically, but the Seifert manifolds covered in \cite{AnHi} have all a vanishing Euler number, so they form a family disjoint to the Seifert manifolds we consider. 

The strategy we follow is inspired by our previous works \cite{oim_MCG} and \cite{LJ1}, \cite{LJ2} in collaboration with J. March\'e , where we  proved some generalized Witten conjecture for some manifolds with non empty boundary. In \cite{LJ1}, \cite{LJ2}, we considered the complement of the figure eight knot and our main tool was some q-difference relations. The paper \cite{oim_MCG} was devoted some mapping cylinders of pseudo Anosov diffeomorphisms and our main tool was the Hitchin connection. In the present paper, we prove a generalized Witten conjecture for the manifold $\Si \times S^1$. The main ingredients we use are the Verlinde formula and the Riemann-Roch theorem.

\subsubsection*{Sketch of the proof} 
The Seifert manifold $S$ being obtained by gluing a solid torus ${T}$ to $\Si \times S^1$, $Z_k (S)$ is the scalar product of two vectors  $Z_k ({T})$ and $Z_{k} (\Si \times S^1)$ of the Hermitian vector space $V_k (\partial \Si \times S^1)$. This vector space has a canonical orthonormal basis $(e_\ell, \; \ell = 1, \ldots, k-1)$. By \cite{FrGo}, the coefficients of $Z_{k} ( \Si \times S^1)$ in this basis are the Verlinde numbers $N^{g,k}_\ell$. 

These numbers can be computed in several ways. First, $N^{g,k}_\ell$ is the number of admissible colorings of a pants decomposition of $\Si$. However this elementary way is not very useful to study the large $k$ limit. At least, we learn that $N^{g,k}_{\ell}$ vanishes when $\ell$ is even. Second, the $N^{g,k}_{\ell}$ are Riemann-Roch numbers associated to the symplectic manifolds $\mo ( \Si, s)$ introduced above. This implies that for any odd $\ell$ satisfying $ 1<\ell <k-1$, we have  
\begin{gather} \label{eq:N_RR} 
 N^{g,k}_{\ell} = \Bigl( \frac{k}{2 \pi } \Bigr)^{3g -2} \sum_{n=0}^{3g-2} k^{-n} Q_{g,n} \Bigl( \frac { \ell}{k} \Bigr) 
\end{gather}
where the $Q_{g,n}$ are smooth function on $]0,1[$, $Q_{g,0} (t)$ being the symplectic volume $v_g (t)$.   
Third, by Verlinde formula, we have 
\begin{gather} \label{eq:N_Verlinde}
  N_{\ell}^{g,k} = \sum _{ m =1}^{k-1} S_{m,1} ^{1 - 2g} S_{m, \ell} \quad \text{ where } \quad S_{m, p}= \Bigl( \frac{2}{k}\Bigr)^{1/2}  \sin \Bigl(  \frac{\pi mp}{k} \Bigr).
\end{gather}

Introduce the Hermitian space $\mathcal{H}_k := \C^{\Z / 2k \Z}$ and denote by $(\Psi_{\ell}, \ell \in \Z / 2k \Z)$ its canonical basis. The family $(\mathcal{H}_k, \; k \in \N )$ may be viewed as the quantization of a two dimensional torus $M = \R^2 / \Z^2 \ni (x,y)$. This has the meaning that $(\mathcal{H}_k)$ may be identified with a space of holomorphic sections of the $k$-th power of a prequantum bundle over $M$. In this context some families $(\xi_k \in \mathcal{H}_k, \; k \in \N)$ concentrating in a precise way on a 1-dimensional submanifold of $M$ are called Lagrangian states. 

We will consider $V_k (\partial \Si \times S^1)$ as a subspace of $\mathcal{H}_k := \C^{\Z / 2k \Z}$ by setting  $e_{\ell} =  ( \Psi_{\ell} - \Psi_{-\ell}) /\sqrt 2$. We will prove that $( Z_{k} (\Si \times S^1), k \in \N)$ is a Lagrangian state supported by a Lagrangian submanifold of $M$.

To do this, we will establish and use the following characterization of Lagrangian states. 
Let $x_0, x_1 \in \R$ such that $ x_0< x_1< x_0+1$ and $\varphi$ be a function in $ \Ci (]x_0 , x_1 [, \R)$. Let $U$ be the open set $]x_0,x_1 [ \times \R/ \Z$ of $M$ and $\Ga$ be the Lagrangian submanifold $\{ (x, \varphi'(x)); \; x \in ]x_0, x_1[ \}$ of $U$. Then a family $( \xi_k = \sum \xi_k ( \ell) \Psi_\ell, k \in \N)$ is a Lagrangian state over $U$ supported by $\Ga$ if and only if 
$$ \xi_k ( 2 k x  ) = k^{-1/2 + N } e^{ 4 i \pi k \varphi (x) } \sum_{n = 0 } ^{ \infty} k^{-n } f_n ( x) + \bigo ( k^{-\infty}) $$
where  the $f_n$ are smooth functions on $]x_0, x_1[$. This formula may be viewed as a discrete analogue of the WKB expression, the function $\varphi$ is a generating function of $\Ga$, the leading order term $f_0$ of the amplitude determines the symbol of the Lagrangian state. 

By this characterization, we deduce from (\ref{eq:N_RR}) that $Z_{k} (\Si \times S^1)$ is the sum of two Lagrangian states over $M \setminus \{ x = 0 \text{ or } 1/2 \}$, supported respectively by $\{ y = 0 \}$ and $\{ y = 1/2 \}$. Indeed, we insert a factor $( 1 - (-1)^{\ell})/2$ in the right hand side of (\ref{eq:N_RR}) so that the equation is valid for even and odd $\ell$, and we use that for  $\ell = 2kx$, $(-1)^{\ell} = e^{ 4 i \pi k x/2}$.
To complete this description on a neighborhood of $\{ x = 0 \text{ or } 1/2 \}$, we will perform a discrete Fourier transform. By Verlinde formula (\ref{eq:N_Verlinde}), we easily get
\begin{gather} \label{eq:3}
 Z_{k} (\Si \times S^1) = \frac{\sqrt k }{2i  } \sum_{m \in (\Z/2k\Z)\setminus \{ 0,k \}} S_{m,1}^{1-2g} \Phi_m 
\end{gather}
where $(\Phi_m)$ is the basis of $\mathcal{H}_k$ given by $\Phi_m = (2k)^{-1/2} \sum_\ell e^{i \pi \ell m /k} \Psi_\ell$ for $m \in \Z /2k \Z$.  A similar characterization of Lagrangian state as above holds, where we exchange $x$ and $y$ and replace the coefficients in the basis $( \Psi_{\ell})$ by the ones in $( \Phi_m)$. We deduce from (\ref{eq:3}) that $Z_k ( \Si \times S^1)$ is Lagrangian on $M \setminus \{ y=0 \text{ or } 1/2 \}$ supported by $\{ x =0 \}$. 

Gathering these results, we conclude that $Z_k ( \Si \times S^1)$ is Lagrangian on the open set $ M \setminus \{ (0,1), ( 0,1/2) \}$ and supported by $\{ x= 0 \} \cup \{ y= 0\} \cup \{ y = 1/2 \}$. There is no  similar description on a neighborhood of $(0,0)$ and $(0,1/2)$ because the circle $\{x =0 \}$ intersects with $\{ y= 0 \}$ and $\{ y =1/2 \}$ at these points. 

By \cite{LJ1}, the state $Z_k ( {T})$ is Lagrangian supported by the circle $\{ y = ax/b \}$.  The scalar product of two Lagrangian states supported on Lagrangian manifolds $\Ga$, $\Ga'$ which intersects transversally, has an asymptotic expansion, and we can compute geometrically the leading order terms, each intersection point of $\Ga \cap \Ga'$ having a contribution \cite{oim_pol}. From this, we obtain Theorem \ref{theo:main-result}, except for the contribution of $X_3$ which corresponds to the singular points $(0,0)$, $(0,1/2)$.  

Actually, a large portion of this paper will be devoted to the contribution of $X_3$ in Theorem \ref{theo:main-result}.  On one hand we will establish some singular stationary phase lemma for discrete oscillatory sum. On the other hand we will prove several properties of the functions $Q_{g,n}$ in (\ref{eq:N_RR}):  first $Q_{g,n}$ vanishes for odd $n$, second $Q_{g, 2m}$ is a polynomial of degree $2(g -m) -1$, third the even part of $Q_{g, 2m}$ is a multiple of the monomial $ x^{2(g-m-1)}$ and the coefficient of this multiple with be explicitly computed for $m=0$. 

To prove these facts, we will study the discrete Fourier transform of the family $( \sin ^{-m}( \pi \ell / k ), \; \ell \in \Z/ 2k \Z)$ in the semiclassical limit $k \rightarrow \infty$. 
These families may be viewed as discrete analogue of the homogeneous distributions and are interesting by themselves. Then using Verlinde formula (\ref{eq:N_Verlinde}), we will recover the expression (\ref{eq:N_RR}) and obtain the above properties of the $Q_{g,n}$. Some of these properties also have symplectic proofs. For instance the fact that the $Q_{g,n}$ are polynomials is a consequence of Duistermaat-Heckman theorem by introducing some extended moduli space as in \cite{Je2}. The fact that the $Q_{g,n}$ vanish for odd $n$ may be deduced from Riemann Roch theorem by computing the characteristic class of the $\mo ( \Si, s)$, and expressing the Riemann Roch number in terms of $A$-genus instead of the Todd class. 

To finish this overview of the proof of Theorem \ref{theo:main-result}, let us briefly explain the topological interpretation of the previous computation.  For any topological space $T$, let $\mo ( T)$ be the space of conjugacy classes of group morphism $\pi_1 (T) \rightarrow \op{SU}(2)$. We have a natural identification between $\mo ( \partial \Si  \times S^1)$ and the quotient $N$ of $M = \R^2 /\Z^2$ by the involution $(x,y) \rightarrow (-x, -y)$. This identification restricts to a bijection between: 
\begin{enumerate} 
\item the projection of $\{x = 0\} \cup \{y =0\} \cup \{ y = 1/2 \}$ and the image of the restriction map $r: \mo ( \Si \times S^1 ) \rightarrow \mo  (  \partial \Si  \times S^1)$
\item the projection of   $\{ y = ax/b \}$ and the image of  the restriction map $r':\mo  ( {T})  \rightarrow \mo  (  \partial \Si  \times S^1)$. 
\end{enumerate}
Finally $\mo (S)$ may be viewed as the fiber product of $r$ and $r'$, its connected components being the fibers of the projection $\mo (S) \rightarrow \mo  (  \partial \Si  \times S^1)$. 
\begin{figure}[!ht]
\centering
\def\svgwidth{9cm}
  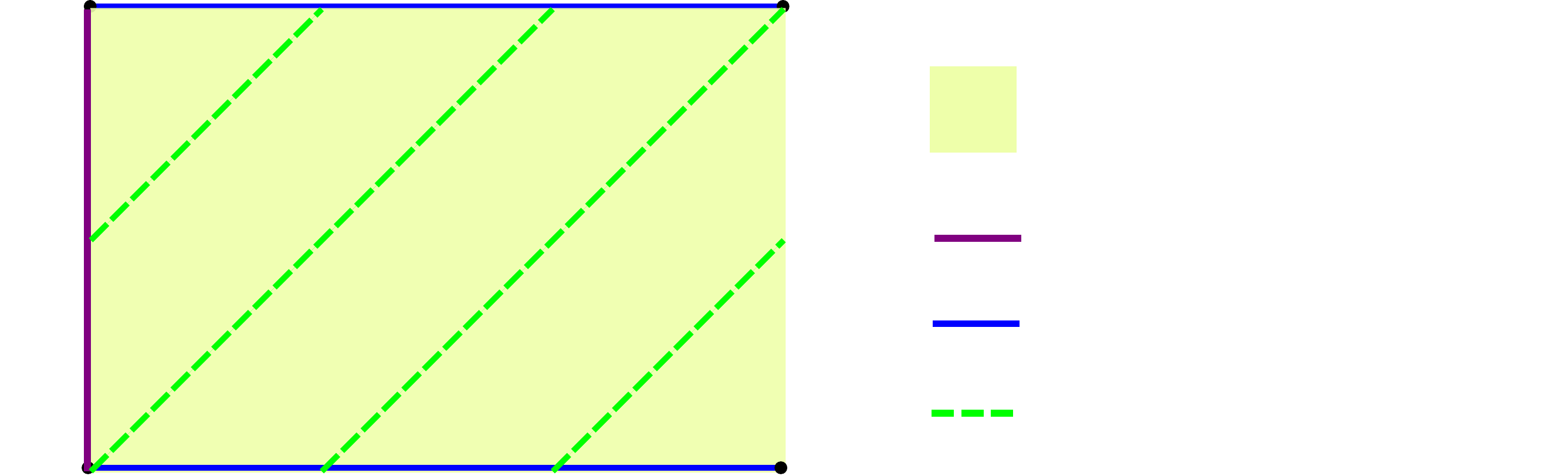 
\caption{Character variety intersections} \label{fig:dessin}
\end{figure}
So the different part of the asymptotic expansion of $\langle Z_k ( {T}), Z_k (\Si \times S^1) \rangle$ are naturally indexed by $X= \pi_0 ( \mo (S))$. In the decomposition $X = X_1 \cup X_2 \cup X_3 \cup X_4$ used in Theorem \ref{theo:main-result}, $X_1$ correspond to the points of $M$ such that $x=0$ and $y \neq 0,1/2$, $X_2$ to $ y =0$ or $1/2$ and $x \neq 0,1/2$, $X_3$ to $(0,0)$, $(0,1/2)$ and $X_4$ to $(1/2,0)$, $(1/2,1/2)$. As we will see, the Chern-Simons invariants appears naturally by interpreting the prequantum bundle of $L$ as the Chern-Simons bundle. Furthermore the expressions for the coefficients $a_0$ come from the leading order term $Q_{g,0} (t) = v_g (t)$ in (\ref{eq:N_RR}), the $(1-2g)$-th power of the sinus in (\ref{eq:N_Verlinde}) and the coefficients $a,b$ of the surgery.
 
\subsubsection*{Outline of the paper} 

In section \ref{sec:witt-resh-tura}, we recall how we compute the WRT invariants of a Seifert manifold in terms of the Verlinde numbers. In section \ref{sec:discr-four-transf}, we study the discrete Fourier transform of the negative power of a sinus and we apply this to the Verlinde numbers. In section \ref{sec:geom-quant-tori}, we recall some basic facts on the quantization of tori and their Lagrangian states. Furthermore we establish a criterion characterizing the Lagrangian states in terms of a generating function of the associated Lagrangian manifold. In section \ref{sec:asympt-behav-z_k}, we prove that $Z_k (\Si \times S^1) $ is a Lagrangian state and deduce the asymptotic behavior of $Z_k (S)$ by using some singular stationary phase lemma proved in the Section \ref{sec:sing-discr-stat}. Finally, Section \ref{sec:geom-interpr-lead} is devoted to the geometric interpretation of the results.


\section{The Witten-Reshetikhin-Turaev invariant of a Seifert manifold} \label{sec:witt-resh-tura}

For any integer $k \geqslant 2$ and any closed oriented 3-manifold $M$, we denote by $Z_k (M)$ the Witten-Reshetikhin-Turaev (WRT) invariant of $M$ for the group $\op{SU}(2)$ at level $k-2$. We are interested in the large level limit, $k \rightarrow \infty$. Since we haven't chosen a spin structure on $M$, the sequence $Z_k (M)$, $k \geqslant 2$ is only defined up to multiplication by $\tau_k^n$ where $\tau_k = e^{ \frac{3i \pi}{4} - \frac{3i \pi } { 2k}}$.  

In the case $M$ is a Seifert manifold, it is easy to compute $Z_k(M)$ by using a surgery presentation of $M$, cf. Section 1 of \cite{FrGo}. Let us explain this.  
 Let $\Si$ be a compact oriented surface with boundary a circle $C$. Let $D$ be a disc and $S^1$ be the standard circle. Consider the Seifert manifold $S$ obtained by gluing the solid torus  $ D \times S^{1}$ to $\Si \times S^1$ along a preserving orientation diffeomorphism $\varphi : \partial D \times S^1 \rightarrow C \times S^1$
$$ S = ( \Si \times S^1 ) \cup_{\varphi} ( D \times S^1)^{-} .$$
The WRT invariant of  a manifold obtained by gluing two manifolds along their boundary may be computed as a scalar product in the setting of topological quantum field theory, \cite{Wi} \cite{ReTu}.   In our particular case, we have 
\begin{gather} \label{eq:scalar_product}
 Z_k (S) = \bigl\langle Z_k ( \Si \times S^1) , \rho_k ( \varphi) ( Z_k ( D \times S^1)) \bigr\rangle_{ V_k ( C \times S^1)}  
\end{gather}
Here $V_{k} ( C  \times S^1)$ is the Hermitian vector space associated to the torus $C \times S^1$. It has dimension $k-1$. To any oriented basis $( \mu, \la)$ of $H_1 (  C \times S^1)$ is associated an orthornormal basis of $V_k ( C \times S^1)$, called the Verlinde basis. Let us choose $\mu = [ C]$, $\la = [S^1]$ and denote by $e_ \ell$, $\ell = 1, \ldots , k-1$ the corresponding basis.   

The bracket in Equation (\ref{eq:scalar_product}) denote the scalar product of $V_k ( C \times S_1)$. Furthermore for any compact oriented 3-manifold $M$ with boundary $C \times S^1$, we denote by $Z_k (M) \in V_k ( C \times S^1)$ the corresponding vector defined in the Chern-Simons topological quantum field theory.

\begin{lem} \label{lem:SiS}
One has $Z_k ( D \times S^1) = e_1$ and $$Z_k ( \Si \times S^1) = \sum_{\ell =1 }^{ k-1} N _\ell ^{g,k} e_{\ell}, $$ where $g$ is the genus of $\Si$ and $N_{\ell} ^{g,k}$ is the dimension of the vector space associated in Chern-Simons quantum field theory  to any genus $g$  surface equipped with one marked point colored by $\ell$. 
\end{lem}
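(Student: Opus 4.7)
The plan is to invoke three standard ingredients of the Chern--Simons TQFT: the definition of the Verlinde basis in terms of solid tori with colored cores, the gluing axiom for scalar products on the boundary vector space, and the trace formula $Z_k(F \times S^1, L) = \dim V_k(F, \mathrm{colors})$ for a closed oriented surface $F$ decorated with marked points.

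First I would recall that the basis $(e_\ell)_{\ell = 1, \ldots, k-1}$ of $V_k(C \times S^1)$ is, by definition, $e_\ell = Z_k(D \times S^1, K_\ell)$, where $K_\ell$ denotes the core circle $\{0\} \times S^1$ of the solid torus colored by the $\ell$-th irreducible representation of $\op{SU}(2)$; in the shifted labeling adopted here, $\ell = 1$ corresponds to the trivial representation. The choice of oriented basis $(\mu, \lambda) = ([C], [S^1])$ of $H_1(C \times S^1)$ is precisely the one under which $[\partial D]$ is identified with $\mu$, so that this family is the orthonormal Verlinde basis. Because the uncolored solid torus coincides with the solid torus whose core is colored by the trivial representation, the first assertion $Z_k(D \times S^1) = e_1$ follows immediately.

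For the second assertion, I expand $Z_k(\Sigma \times S^1) = \sum_{\ell} c_\ell \, e_\ell$ and compute the coefficient $c_\ell = \langle Z_k(\Sigma \times S^1), e_\ell \rangle$. By the gluing axiom, this scalar product equals $Z_k$ of the closed 3-manifold obtained by gluing $\Sigma \times S^1$ to $(D \times S^1)^{-}$ through the identification $\partial D \simeq C$, equipped with the core of the solid torus colored by $\ell$. The resulting closed manifold is $\hat\Sigma \times S^1$, where $\hat\Sigma$ is the closed genus $g$ surface obtained by capping $\Sigma$ with a disc, together with an embedded knot $\{p\} \times S^1$ (with $p$ the center of the capping disc) colored by $\ell$. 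Applying the trace formula to this $S^1$-bundle gives $c_\ell = \dim V_k(\hat\Sigma, \ell) = N^{g,k}_\ell$, which is the announced formula.

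The whole argument is essentially a bookkeeping exercise inside the TQFT axioms, so I do not anticipate any serious technical obstacle. The only points deserving attention are matching the orientation of $(D \times S^1)^{-}$ with the one used to define the Verlinde basis, and checking that the convention $\mu = [C]$, $\lambda = [S^1]$ is consistent with the solid torus filling in the basis definition, so that the gluing pairing genuinely produces the identification with $\dim V_k(\hat\Sigma, \ell)$; both points are routine once the basis $(\mu, \lambda)$ has been fixed as in the statement.
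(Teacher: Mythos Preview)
Your proof is correct and follows essentially the same route as the paper: both invoke the definition of the Verlinde basis via colored solid tori to get $Z_k(D\times S^1)=e_1$, then use the gluing axiom to identify $\langle Z_k(\Sigma\times S^1), e_\ell\rangle$ with $Z_k(\hat\Sigma\times S^1,\ell)$, and finally the trace formula for $F\times S^1$ to obtain $\dim V_k(\hat\Sigma,\ell)$. The only cosmetic difference is that the paper spells out the trace formula by cutting $\hat\Sigma\times S^1$ as $\hat\Sigma\times[0,1]$ glued to itself, whereas you cite it as a known identity.
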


In our convention, the set of colors is $\mathcal{C}_k = \{ 1, 2, \ldots, k-1 \}$, the color $\ell$ corresponding to the $\ell$-dimensional irreducible representation of $\op{SU}(2)$. 
\begin{proof} Recall first that the Verlinde basis consists of the vectors given by  
$$ e_\ell = Z_k ( D \times S^1, x, \ell) $$
where $x$ is the banded link $[0,1/2] \times S^1$ of $D \times S^1$. Since $\ell =1$ is  the trivial color in our convention,  $Z_k ( D \times S^1) = e_1$. Let us compute the coefficients of $Z_k ( \Si \times S^1)$. We have 
$$ \langle Z_{k} ( \Si \times S^1) , e_{\ell} \rangle = Z_k ( \overline{\Si} \times S^1, x, \ell) $$
where $\overline \Si$ is the closed surface $  \Si \cup_{C} D^-$. Viewing $\overline \Si \times S^1$ as the gluing of $ \overline{\Si} \times [0,1]$ with itself, we obtain that  $Z_k ( \overline{\Si} \times S^1, x, \ell) $ is the trace of the identity of $V_k ( \overline{ \Si}, \ell)$.
\end{proof}

There are several ways to compute the numbers $N _\ell ^{g,k}$. First $N_{\ell}^{g,k}$ is the number of admissible colorings of any pants decomposition of $\Si$. But this elementary way is not very useful to study the large $k$ limit. Alternatively we can use the Verlinde formula. 

\begin{theo} \label{theo:Verlinde}
For any $k \in \N^*$, $\ell = 1 , \ldots , k-1$ and $g \in \N^*$, we have 
$$ N_{\ell}^{g,k} = \sum _{ m =1}^{k-1} S_{m,1} ^{1 - 2g} S_{m, \ell}
$$
where $S_{m, p}= \bigl( \frac{2}{k}\bigr)^{1/2}  \sin ( \pi \frac{mp}{k} )$.
\end{theo}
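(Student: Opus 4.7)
The approach I would take is to interpret $N_\ell^{g,k}$ as the value of a Frobenius trace on the fusion algebra of $\op{SU}(2)$ at level $k-2$, then diagonalize this algebra via the $S$-matrix. Let $A$ denote the commutative $\C$-algebra with basis $(e_\ell)_{\ell \in \coul_k}$, where $\coul_k = \{1,\ldots,k-1\}$, with $e_1$ as unit and multiplication $e_a e_b = \sum_c N_{ab}^c e_c$ encoding the fusion rules (since all $\op{SU}(2)$ representations are self-dual, $N_{ab}^c = N_{abc}$ is symmetric in the three indices). Equip $A$ with the Frobenius trace $\eta : A \to \C$ defined by $\eta(e_\ell) = \delta_{\ell,1}$.

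The first step is to use the gluing axiom of the Chern-Simons TQFT, exactly as in the proof of Lemma~\ref{lem:SiS}. Decomposing $\Si$ together with its marked point into $2g-1$ pairs of pants along $3g-2$ internal circles and summing products of the fusion coefficients over all admissible colorings of these circles gives
\begin{equation*}
 N_\ell^{g,k} = \eta \bigl( \omega^g\, e_\ell \bigr), \qquad \omega := \sum_{j \in \coul_k} e_j^2 \in A,
\end{equation*}
where $\omega$ is the handle element: each factor of $\omega$ corresponds topologically to attaching one handle to a sphere. This identity can be cross-checked on the base cases $(g,n) \in \{(0,0),(0,1),(0,2),(0,3),(1,0)\}$, where it reproduces $\dim V = 1$, $\delta_{\ell,1}$, $\delta_{a,b^*}$, $N_{abc}$, and $|\coul_k| = k-1$ respectively.

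The second step is the classical fact that the matrix $S$ simultaneously diagonalizes all operators of left-multiplication on $A$: there exist orthogonal idempotents $1 = \sum_m p_m$ such that $e_\ell = \sum_m (S_{\ell,m}/S_{1,m})\, p_m$. Using the column-unitarity $\sum_j S_{j,m} S_{j,n} = \delta_{m,n}$ this yields $\omega = \sum_m S_{1,m}^{-2}\, p_m$, and the row identity $\sum_m S_{1,m}^2 = 1$ combined with $\eta(1) = 1$ forces $\eta(p_m) = S_{1,m}^2$. Substituting into $N_\ell^{g,k} = \eta(\omega^g e_\ell)$ gives
\begin{equation*}
 N_\ell^{g,k} = \sum_{m=1}^{k-1} S_{1,m}^2 \cdot S_{1,m}^{-2g} \cdot \frac{S_{\ell,m}}{S_{1,m}} = \sum_{m=1}^{k-1} S_{1,m}^{1-2g}\, S_{\ell,m},
\end{equation*}
which is the desired identity. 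The non-trivial ingredient, and the step I expect to be the main obstacle, is the diagonalization of the fusion algebra by $S$ --- a form of Verlinde's conjecture; in the present $\op{SU}(2)$ situation it can be verified directly from the explicit formula $S_{m,p} = \sqrt{2/k}\,\sin(\pi mp/k)$ via standard trigonometric identities for sums of sines at rational multiples of $\pi$, or else quoted from any standard reference on rational conformal field theory or the modular tensor category of quantum $\op{SU}(2)$ at a root of unity.
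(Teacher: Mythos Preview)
The paper does not actually supply a proof of this theorem: it is simply stated as the Verlinde formula, treated as a known input to the rest of the argument. So there is no ``paper's own proof'' to compare against; your proposal is supplying what the paper takes for granted.

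Your outline is the standard one and is essentially correct: express $N_\ell^{g,k}$ via the handle element $\omega$ in the fusion algebra, then diagonalize by the $S$-matrix idempotents. One step is under-justified, though. You write that ``the row identity $\sum_m S_{1,m}^2 = 1$ combined with $\eta(1)=1$ forces $\eta(p_m) = S_{1,m}^2$''. This does not follow: those two facts together only give the single scalar constraint $\sum_m \eta(p_m) = 1 = \sum_m S_{1,m}^2$, which certainly does not determine each $\eta(p_m)$ individually. The clean way to get $\eta(p_m) = S_{1,m}^2$ is to invert your change of basis. From $e_\ell = \sum_m (S_{\ell,m}/S_{1,m})\, p_m$ and the fact that for $\op{SU}(2)$ the matrix $S$ is real symmetric with $S^2 = \mathrm{id}$, one obtains $p_m = S_{1,m} \sum_\ell S_{m,\ell}\, e_\ell$, and then $\eta(p_m) = S_{1,m} S_{m,1} = S_{1,m}^2$ directly from $\eta(e_\ell) = \delta_{\ell,1}$. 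With that fix the computation goes through exactly as you wrote it.
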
 

Later we will also use the fact that $N_{\ell}^{g,k}$ can be computed with the Riemann-Roch theorem, cf. Theorem \ref{theo:RR}.

It remains to explain the meaning of the $\rho_k$ appearing in Equation (\ref{eq:scalar_product}). $\rho_k $ is the representation of the mapping class group of $C \times S^1$ in $V_k ( C \times S^1)$ provided by the topological quantum field theory. It is actually a projective representation. More precisely, $\rho_k ( \varphi)$ is well-defined up to a power of the constant $\tau_k$ defined above. Using the basis $( \mu,\la)$, the mapping class group of $C \times S^1$ is identified with $\op{SL} ( 2, \Z)$. Then we have 
\begin{xalignat*}{3} 
\rho_k(T)e_\ell=  &  e^{\frac{i\pi(\ell^2-1)}{2k}}e_\ell,  & &  \text{ if } \quad T=\begin{pmatrix} 1&1\\ 0&1\end{pmatrix} 
\intertext{ and } 
  \rho_k(S)e_\ell=  & \sqrt{\frac{2}{k}}\sum_{\ell'=1}^{k-1}\sin \Bigl( \frac{\pi \ell\ell'}{k} \Bigr) e_{\ell'},   & & \text{ if }  S=\begin{pmatrix} 0&-1\\ 1&0\end{pmatrix}. 
\end{xalignat*}
Since $\op{SL} ( 2, \Z)$ is generated by the matrices $S$ and $T$, the representation is completely determined by these formulas.

\section{The discrete Fourier transform of a negative power of sinus} \label{sec:discr-four-transf}
 
Let $\T = \R / 2 \Z$ and $R_k = (\frac{1}{k} \Z) / 2 \Z \subset \T$. 
Introduce for any $m \in \N$, the function $\Xi_m$ from $R_k$ to $\C$ given by 
\begin{gather} \label{eq:defXi}
\Xi_{m,k} (x) =  (  i )^{-m} \sum_{ y \in R_k \setminus \{ 0, 1\} }  \bigl[ \sin ( \pi y ) \bigr]^{-m} e^{ik \pi y x }, \qquad \forall x \in R_k .
\end{gather}
This function may be viewed as the discrete Fourier transform of $y \rightarrow \bigl[ \sin ( \pi y ) \bigr]^{-m}$. We are interested in its behavior as $k$ tends to infinity.

\begin{theo} \label{theo:dev_part}
 For any $m\in \N^*$, there exists a polynomial function $P_{m}  \in \Q [k,x]$   such that 
for any $k \in \N^*$ and $x \in [0,2 ] \cap \frac{1}{k} \Z $, we have
$$ \Xi_{m,k} (x) = \bigl( 1 + ( -1 ) ^{kx + m }\bigr)  P_{m} ( k,x) .$$
Furthermore, $P_{m}$ is a linear combination of the monomials $ k^q x^p$ where $p,q$ run over the integers satisfying $0 \leqslant p \leqslant q \leqslant m$.
\end{theo}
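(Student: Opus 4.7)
The plan is to derive a closed form for $\Xi_{m,k}(p/k)$ as a sum over $k$-th roots of unity, then extract the polynomial structure via a partial-fraction expansion together with a classical power-sum identity.

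First I would use the two involutions $y \mapsto 2 - y$ and $y \mapsto 1 - y$, both of which preserve $R_k \setminus \{0,1\}$. The first flips $\sin(\pi y)$ and conjugates the exponential (using $kx \in \Z$), while the second preserves $\sin(\pi y)$ and conjugates the exponential up to a factor $(-1)^{kx}$. Running these through the sum defining $\Xi_{m,k}$ yields $\Xi_{m,k}(p/k) = (-1)^{p+m}\, \Xi_{m,k}(p/k)$, which both forces vanishing when $p+m$ is odd and pins down the prefactor $1 + (-1)^{p+m}$. For $p+m$ even, setting $w = e^{i\pi j/k}$ and then $z = w^2$ (a $k$-th root of unity), the $w \leftrightarrow -w$ pairing consolidates the sum to
\[
\Xi_{m,k}(p/k) = 2^{m+1} \sum_{z^k = 1,\, z \ne 1} \frac{z^s}{(z-1)^m}, \qquad s = \tfrac{m+p}{2}.
\]

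Next I would decompose $z^s = \sum_{j=0}^{s}\binom{s}{j}(z-1)^j$ into principal and polynomial parts at $z=1$:
\[
\frac{z^s}{(z-1)^m} = \sum_{j=0}^{m-1}\binom{s}{j}(z-1)^{j-m} + \tilde R(z), \qquad \deg \tilde R = s - m < k,
\]
the degree bound using $p \le 2k$ and $m \ge 1$. Summing over $z \ne 1$ with $z^k = 1$, the principal part contributes $\sum_{j=0}^{m-1}\binom{s}{j}\, T_{m-j}(k)$, where $T_\ell(k) := \sum_{z^k = 1,\, z\ne 1}(z-1)^{-\ell}$. For the polynomial part, since $\deg \tilde R < k$ only the constant coefficient survives in $\sum_{z^k = 1}\tilde R(z) = k\, \tilde R(0)$, so that $\sum_{z \ne 1}\tilde R(z) = k\, \tilde R(0) - \tilde R(1)$; direct computation yields $\tilde R(1) = \binom{s}{m}$ (from the Taylor expansion at $z = 1$) and $\tilde R(0) = (-1)^{m+1}\sum_{j=0}^{m-1}\binom{s}{j}(-1)^j$.

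Assembling these pieces produces an explicit formula for $\Xi_{m,k}(p/k)/2$ as a $\Q$-linear combination of products $\binom{s}{j} \cdot (\text{polynomial in }k)$ with $s = (m+p)/2$. Since $\binom{s}{j}$ is a polynomial in $s$ (hence in $p$) of degree $j$, the total-degree claim for the polynomial $Q_m(k,p) := \Xi_{m,k}(p/k)/2$ reduces to the fact that $T_\ell(k) \in \Q[k]$ has degree $\le \ell$ for each $\ell \ge 1$. This will be the main obstacle; I would establish it as a preliminary lemma by observing that $T_\ell = -F^{(\ell)}(1)/(\ell-1)!$ for $F(x) = \log\bigl((x^k - 1)/(x-1)\bigr) = \log\prod_{j=1}^{k-1}(x - \zeta_j)$, and then inductively bounding the $k$-degrees of the successive derivatives $F^{(\ell)}(1)$ via Fa\`{a} di Bruno's formula, using that the derivatives at $x=1$ of $1 + x + \cdots + x^{k-1}$ are the explicit polynomials $\ell!\binom{k}{\ell+1}$ in $k$. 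Granted this, each term $\binom{s}{j}T_{m-j}(k)$ has total degree $\le j + (m-j) = m$ in $(k,p)$, and likewise the terms $k\binom{s}{j}$ from $\tilde R(0)$ and the term $\binom{s}{m}$ have total degree $\le m$, so $Q_m$ has total degree $\le m$. Reparametrizing via $P_m(k,x) = Q_m(k,kx)$ converts this total-degree statement into precisely the monomial constraint $0 \le p \le q \le m$ claimed for $P_m$.
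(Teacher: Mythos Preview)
Your argument is correct and follows a genuinely different route from the paper's.

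The paper proves the theorem by induction on $m$: starting from the explicit formula for $\Xi_{0,k}$, it uses the identity $\Delta \Xi_{m+1} = \Xi_m$ (with $\Delta = \tfrac12(L-L^{-1})$ a symmetric difference) and explicitly inverts $\Delta$ on the relevant subspaces. The inversion is a discrete antiderivative, and the polynomial degree bound is propagated via Faulhaber's formula $\sum_{m=1}^\ell m^p = Q_p(\ell)$ with $\deg Q_p = p+1$. This gives a recursive algorithm for the $P_m$ and feeds directly into the finer structural results established afterwards (the parity of $P_m$ in $k$, and the shape of the singular part $P_m^\pm$).

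Your approach is a direct computation: after isolating the parity factor via the substitution $y\mapsto y+1$, you collapse the sum to $k$-th roots of unity, then split $z^s/(z-1)^m$ into its principal part at $z=1$ and a polynomial remainder of degree $<k$. The whole degree bound then rests on the single lemma that $T_\ell(k)=\sum_{z^k=1,\,z\ne 1}(z-1)^{-\ell}$ lies in $\Q[k]$ with degree $\le \ell$. Your Fa\`a di Bruno argument for this is fine: writing $g(x)=(x^k-1)/(x-1)$ one has $g^{(j)}(1)=j!\binom{k}{j+1}$, a polynomial in $k$ of degree $j+1$ divisible by $k$; each term in the Fa\`a di Bruno expansion of $(\log g)^{(\ell)}(1)$ is then a polynomial in $k$ of degree exactly $\ell$ after the $k^{|\pi|}$ in the denominator cancels. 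The reparametrization $P_m(k,x)=Q_m(k,kx)$ is exactly what converts ``total degree $\le m$ in $(k,p)$'' into the triangular constraint $0\le p\le q\le m$.

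What each approach buys: yours yields an explicit closed formula for $P_m$ in one stroke and makes the $\Q$-rationality and degree bound completely transparent. The paper's inductive method is less explicit but is tailored to the subsequent refinements (Propositions~3.4--3.6), which flow naturally from the recursion $\Delta\Xi_{m+1}=\Xi_m$; extracting those refinements from your closed formula would require additional work on the power sums $T_\ell(k)$.

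Two small remarks. First, only the single involution $y\mapsto y+1$ is needed to produce the prefactor; the two you mention compose to it. Second, for the endpoint $x=2$ (i.e.\ $p=2k$) your degree bound $s-m<k$ still holds since $m\ge 1$, so the argument covers the full range $[0,2]\cap\tfrac1k\Z$ as stated.
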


The proof, given in Section \ref{sec:proof-theor-refth}, allows to compute inductively the polynomials $P_m$. In particular, we have 
\begin{xalignat*}{1}   & P_1 (k,x ) =  k ( - x +1 ),  \\ & P_2 (k,x)  =  k^2 \bigl( -\tfrac{1}{2} x^2 +x - \tfrac{1}{3} \bigr) + \tfrac{1}{3}, \\
&  P_3 (k,x  ) = k^3  \bigl( - \tfrac{1}{6} x^3 + \tfrac{1}{2} x^2 -\tfrac{1}{3} x \bigr) + k \bigl( \tfrac{1}{2} x -\tfrac{1}{2} \bigr).
\end{xalignat*} 
In Section \ref{sec:furth-prop-polyn}, we will establish further properties of the polynomials $P_m$. First, each $P_m$ is a linear combination of monomials $k^q x^p$ where $0 \leqslant p \leqslant q \leqslant m$ and $q \equiv m$ modulo 2. Second we will describe the singularity of $\Xi_{m,k}$ at $x =0$ as follows. Write $P_m = P_m^{+} + P_m^{-}$ where $P_m^+$ (resp. $P_m ^{-}$) is a linear combination of the $k^q x^{2 \ell}$ (resp. $k^q x^{2 \ell+1}$). Then 
$$  \frac{1}{2} \bigl( P_m  ( k ,x ) - P_m ( k, 2 + x ) \bigr) = \begin{cases} P^-_m ( k, x) \quad \text{ if $m$ is even,} \\ P^+_m ( k, x) \quad \text{ if $m$ is odd.}
\end{cases} $$
Furthermore, if $m \geqslant 1$ is even (resp. odd), $P_m^-$ (resp. $P^+_m$) is a linear combination of the monomials $k^{m-2 \ell} x^{m- 2\ell -1}$, with $\ell = 0 ,1 \ldots$. The coefficient of $k^{m} x^{m-1}$ is $ 1 /  (m-1)!$. In Section \ref{sec:appl-count-funct}, we will apply this to the counting function $N^{g,k}_{\ell}$.
 
\subsection{Proof of Theorem \ref{theo:dev_part}} \label{sec:proof-theor-refth}

In the sequel, everything depends on $k$. Nevertheless, to reduce the amount of notation, the subscript $k$ will often be omitted. Introduce the space $\mathcal{H} := \C^{R_k}$ and its scalar product 
$$ \langle f, g \rangle = \frac{1}{2k} \sum_{x \in R_k } f(x) \overline{g(x)} , \qquad f, g \in \mathcal{H} .$$ 
\subsubsection*{The endomorphisms $T$, $L$ and $\Delta$} 
Introduce the endomorphisms $T$, $L$ and $\Delta$ of $ \mathcal{H}$ given by  
$$ (Tf ) (x) = ( -1)^{kx} f(x), \qquad ( L f) ( x) := f \Bigl( x + \frac{1}{k} \Bigr), \qquad \De = \tfrac{1}{2} \bigl( L - L^{-1} \bigr)$$ 
Observe that $T$ is unitary, $T^2 = \op{id}$ and $\mathcal{H} = \mathcal{H}^+ \oplus^{\perp} \mathcal{H}^-$  where $\mathcal{H}^{\pm} = \ker ( T \mp \op{id}).$ Furthermore, $\mathcal{H}^+$ (resp. $\mathcal{H}^-$) consists of the functions vanishing on the $x \in R_k$ such that $kx$ is odd (resp. even). The orthogonal projector of $\mathcal{H}$ onto $\mathcal{H}^{\pm}$ is $\frac{1}{2} ( \op{id} \pm T)$.  

Let $u_0 \in \mathcal{H} $ be the function constant equal to $1$, $u_1 = T u_0$ and $u_0 ^\pm   = \frac{1}{2} ( u_0  \pm  u_1 ) $. Denote by $\mathcal{H}_0^{\pm}$ the subspace of $\mathcal{H}^\pm$ orthogonal to $u_0 ^\pm$.  
  
\begin{lem} The kernel of $\Delta$ is  spanned by $u_0$ and $u_1$. Furthermore $\Delta$ restricts to a bijection from $\mathcal{H}_0^{\pm} $ to $  \mathcal{H}_0^{\mp}$.
\end{lem}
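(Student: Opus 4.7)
My plan is to use only the structure of $L$ as the unitary shift by $1/k$ (so $L^{2k}=\id$, $L^*=L^{-1}$) together with the anti-commutation relation $LT=-TL$ which follows immediately from $(-1)^{k(x+1/k)}=-(-1)^{kx}$.

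For the kernel, $\Delta f=0$ means $L^2f=f$, i.e.\ $f(x+2/k)=f(x)$ on $R_k$. The orbits of $L^2$ on $R_k$ are the two sets $\{x\in R_k:kx\text{ even}\}$ and $\{x\in R_k:kx\text{ odd}\}$, each of size $k$. Hence $\ker\Delta$ is two-dimensional, and since $u_0\equiv 1$ and $u_1=(-1)^{kx}$ both lie in it and are linearly independent, they span it.

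For the second part, the relation $T\Delta=-\Delta T$ shows that $\Delta$ sends $\mathcal{H}^{\pm}$ into $\mathcal{H}^{\mp}$. Moreover $\Delta^*=\frac{1}{2}(L^{-1}-L)=-\Delta$ is skew-adjoint, so for $f\in\mathcal{H}^{+}$
\[
\langle\Delta f,u_0^-\rangle=-\langle f,\Delta u_0^-\rangle=0
\]
since $u_0^-\in\ker\Delta$; this gives $\Delta(\mathcal{H}_0^{+})\subset\mathcal{H}_0^{-}$, and symmetrically for the other sign. Injectivity on $\mathcal{H}_0^{\pm}$ is immediate because $\ker\Delta\cap\mathcal{H}^{\pm}=\C\, u_0^{\pm}$, which meets $\mathcal{H}_0^{\pm}$ only at $0$. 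A dimension count concludes: each parity class in $R_k$ has $k$ points, so $\dim\mathcal{H}^{\pm}=k$ and $\dim\mathcal{H}_0^{\pm}=k-1$; the injection $\Delta:\mathcal{H}_0^{\pm}\to\mathcal{H}_0^{\mp}$ between equidimensional spaces is therefore bijective.

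There is no real obstacle here: everything reduces to elementary finite-dimensional linear algebra once the two algebraic facts $LT=-TL$ and $L^*=L^{-1}$ are noted. The mildest subtlety is just checking that $u_0^{\pm}$ really lies in both $\ker\Delta$ and $\mathcal{H}^{\pm}$, which follows from $T^2=\id$ and $Tu_0=u_1$.
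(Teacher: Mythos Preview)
Your proof is correct and follows essentially the same approach as the paper's: both use the anti-commutation $LT=-TL$ to get $\Delta(\mathcal{H}^{\pm})\subset\mathcal{H}^{\mp}$, the skew-adjointness of $\Delta$ (from $L$ unitary) to control the range, and the identification of $\ker\Delta$ as the span of $u_0,u_1$. The only cosmetic difference is that the paper phrases the surjectivity as ``the range of $\Delta$ is the orthogonal complement of $\ker\Delta$'' in one line, whereas you separate it into an explicit inclusion $\Delta(\mathcal{H}_0^{\pm})\subset\mathcal{H}_0^{\mp}$, injectivity, and a dimension count; the content is the same.
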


\begin{proof} We easily check that $u_0$ and $u_1$ belong to the kernel of $\Delta$ and that this kernel is 2-dimensional. 
Furthermore we have that $LT + T L = 0$, so that $\Delta T + T \Delta = 0$ and consequently $ \Delta ( \mathcal{H}^{\pm} ) \subset \mathcal{ H}^{\mp}$. Since $L$ is unitary, $\Delta $ is skew-Hermitian. So the range of $\Delta$ is the subspace of $\mathcal{H}$ orthogonal to $u_0$ and $u_1$.  
\end{proof} 
\subsubsection*{Discrete Fourier transform} 
Let $(u_y, \; y \in R_k )$ be the orthonormal basis of $\mathcal{H}$ 
$$ u_{y} (x) =  e^{ik\pi y  x }, \quad \forall x \in R_k  $$ 
When $y=0$ or $1$, we recover the functions $u_0$ and $u_1$ introduced previously. 
Denote by $\mathcal{F} : \mathcal{H} \rightarrow \mathcal{H}$ the discrete Fourier transform
$$ \mathcal{F} ( f) ( y) = \langle f, u_y \rangle , \qquad \forall y \in R_k . $$
Using the relations $ T u _y = u_{y+1}$ and $ L u _y = e^{i \pi y} u_y$, we deduce that for any $f \in \mathcal{H}$ and $y \in R_k$ 
\begin{gather} \label{eq:fourierTDelta}
 \mathcal{F} (T f )  ( y) = \mathcal{F}(f) ( y- 1), \qquad \mathcal{F} ( \Delta  f)  ( y) =  i \sin ( \pi y ) \mathcal{F} ( f) ( y)  . 
\end{gather} 
By definition of $\Xi_m$, Equation (\ref{eq:defXi}), its discrete Fourier transform is given by 
$$  \mathcal{F} ( \Xi_m ) ( y)  = \begin{cases}   \bigl[ i\; \sin ( \pi y) \bigr]^{-m} \text{ if } y \neq 0, 1 \\ 0 \qquad \text{ otherwise.} \end{cases}
$$
Observe that  $\mathcal{F} ( \Xi_m ) ( y -1 ) = (-1)^m \mathcal{F} ( \Xi_m ) ( y)$,  so that $T \Xi_m = ( -1)^m \Xi_m$. Furthermore $\mathcal{F} ( \Xi_m ) ( 0) = \mathcal{F} ( \Xi_m ) ( 1) = 0$ implies that $\Xi_m$ is orthogonal to both $u_0$ and $u_1$. Hence, $\Xi_m$ belongs to $\mathcal{H}^{+}_0$ or $\mathcal{H}^{-}_0$ according to whether $m$ is even or odd. Furthermore we have that $\Delta \Xi_{m+1} = \Xi_m$. So we can inductively compute $\Xi_m$ by inverting the operators $\Delta: \mathcal{H}^{\pm}_0 \rightarrow \mathcal{H}^{\mp}_0$. 

Let us first compute $\Xi_0$. We have 
$$ \Xi_0  =  \Bigl( \textstyle{\sum}_{y \in R_k} u_y \Bigr) -  ( u_0 + u_1 ) .$$
Furthermore $\sum_{y \in R_k} u_y  = 2k \delta$ where $\delta ( x) = 1$ if $x = 0$ and $\delta ( x) = 0$ otherwise. So we obtain
\begin{gather} \label{eq:xi0}
 \Xi_0 (x) = 2 k \delta (x) - 1 - ( -1)^{kx} .
\end{gather}  

Let us compute the inverse of $\Delta :\mathcal{H}^-_0  \rightarrow \mathcal{H}^+_0 $.  Denote by $S_k $ the set of integers $\{1, \ldots, k \}$. Let $\tilde{L} $ and $L$ be the endomorphisms of $\C^{S_k}$ defined by 
$$  \quad \forall \ell \in S_k, \quad \tilde{L} (f) ( \ell ) = \sum _{m = 1}^{ \ell} f (m), \qquad L ( f) = 2 \tilde{L} (f) - \frac{2}{k} \sum_{1}^{k} \tilde{L} (f)(\ell)  $$
Let us identify $\mathcal{H}^+$ and $\mathcal{H}^-$ with $\C^{S_k}$ by sending $g^{\pm} \in \mathcal{H}^\pm$ into $f^{\pm} \in \C^{S_k}$ given by 
\begin{gather}\label{eq:rel1}
 f^{+} ( \ell ) = g^+\Bigl( \frac{2(\ell-1)}{k } \Bigr) , \qquad f^{-} ( \ell ) = g^{-} \Bigl(  \frac{2 \ell -1 } { k}  \Bigr) .
\end{gather} 
Observe that the subspaces $ \mathcal{H}^{+}_0$ and $\mathcal{H}^{-}_0$ get identified with the subspace of $\C^{S_k}$ consisting of function with vanishing average. 
\begin{lem}
The inverse of $\Delta : \mathcal{H}^-_0  \rightarrow \mathcal{H}^+_0$ is $L$.  
\end{lem}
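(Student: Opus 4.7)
The plan is to transport both operators to $\C^{S_k}$ via the identifications (\ref{eq:rel1}), recognize $\Delta$ as a cyclic backward-difference operator, and then verify that $L$ is its inverse on the zero-average subspace by a direct telescoping computation.

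First I would pin down the subspaces. Since $u_0^{\pm}$ is the indicator of $\{x \in R_k : kx \text{ even (resp.\ odd)}\}$, a short computation gives $\langle g^\pm, u_0^\pm \rangle_{\mathcal{H}} = \tfrac{1}{2k}\sum_{\ell=1}^k f^{\pm}(\ell)$, so both $\mathcal{H}^+_0$ and $\mathcal{H}^-_0$ correspond, under (\ref{eq:rel1}), to the zero-average subspace $V := \{f \in \C^{S_k} : \sum_\ell f(\ell) = 0\}$. Next I would compute $\Delta$ under this identification. For $g^- \in \mathcal{H}^-$ and $x = 2(\ell-1)/k$, the neighbors $x \pm 1/k$ both lie in the support of $g^-$: one has $x+1/k = (2\ell-1)/k$, giving $f^-(\ell)$, and $x-1/k = (2\ell-3)/k$, giving $f^-(\ell-1)$ for $\ell \geq 2$; for $\ell = 1$ one has $-1/k \equiv (2k-1)/k \pmod{2}$, giving $f^-(k)$. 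Thus, with the cyclic convention $f^-(0) := f^-(k)$, the operator $\Delta$ transports to the cyclic backward-difference $D : V \to V$, $Df(\ell) = \tfrac{1}{2}(f(\ell) - f(\ell-1))$.

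The main calculation is then the identity $D \circ L = \id$ on $V$. The constant term $-\tfrac{2}{k}\sum_{\ell'} \tilde L(f)(\ell')$ in the definition of $L$ is independent of $\ell$ and therefore cancels under the difference $D$, leaving $DLf(\ell) = \tilde L f(\ell) - \tilde L f(\ell-1)$. For $\ell \geq 2$ this is manifestly $f(\ell)$ by definition of $\tilde L$; for $\ell = 1$ the cyclic convention forces $\tilde L f(0) := \tilde L f(k) = \sum_{m=1}^k f(m) = 0$ since $f \in V$, so the same formula still yields $f(1)$. Moreover $L$ automatically lands in $V$, since by construction it is $2\tilde L$ minus its own average. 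Combined with the preceding lemma (which already yielded that $\Delta : \mathcal{H}^-_0 \to \mathcal{H}^+_0$ is a bijection), this forces $L = \Delta^{-1}$.

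The only delicate step — and the reason the subtraction of the average appears in the definition of $L$ — is the closure of the telescoping at $\ell = 1$: without the zero-average hypothesis on $f$, the wrap-around term $\tilde L f(k)$ would not vanish and the identity would fail. Everything else is routine bookkeeping.
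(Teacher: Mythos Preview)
Your proof is correct and follows essentially the same route as the paper: transport $\Delta$ to $\C^{S_k}$ via (\ref{eq:rel1}), identify it with the cyclic backward-difference $Df(\ell)=\tfrac12(f(\ell)-f(\ell-1))$ with $f(0):=f(k)$, and verify $D\circ L=\id$ on the zero-average subspace by telescoping, the vanishing-average hypothesis handling the wrap-around at $\ell=1$. The only cosmetic difference is that the paper first checks $2\tilde\Delta\tilde L f=f$ and then kills the constant via $\tilde\Delta\,1=0$, whereas you discard the constant term of $L$ at the outset; the content is the same.
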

\begin{proof} 
Let $\tilde{\Delta}$ be the endomorphism of $\C^{S_k}$ corresponding to $\Delta : \mathcal{H} ^- \rightarrow \mathcal{H} ^+$ through the identifications $\mathcal{H}^- \simeq \C^{S_k}$ and $\mathcal{H}^+ \simeq \C^{S_k}$. A straightforward computation shows that for any $f \in \C^{S_k}$, we have $$\tilde{\Delta} f  (\ell) = \frac{1}{2} ( f ( \ell) - f ( \ell -1 )) , \qquad \ell \in S_k$$ with the convention that $f (0) = f(k)$.  Assume that the average of $f$ vanishes, so that $\tilde {L} f ( k) = \tilde{L} f ( 0)$. So we have that $2 \tilde{\Delta} \tilde{L} f = f$.  Since $\tilde{\Delta} 1 = 0 $, it follows that $ \tilde{\Delta} L f = f$. Furthermore $Lf$ has a vanishing average. 
\end{proof}

Let us apply this to compute $\Xi_1$. By (\ref{eq:xi0}) and (\ref{eq:rel1}), the function $f^{+}_0 \in \C^{S_k}$  corresponding to $\Xi_0$ is given by $f^{+}_0 ( 1) = 2k -2 $ and $f^{+}_0 ( \ell ) = -2$ for $\ell = 2, \ldots, k$. So that $\tilde{L} ( f^{+}_0)(\ell)  = 2k - 2\ell$ and $L( f^{+}_0)( \ell) = -4 \ell + 2k + 2 $. Inverting the second relation of (\ref{eq:rel1}), we obtain for any $x$ of the form $(2 \ell + 1)/k$,  
\begin{gather} \label{eq:rel2}
 g^{-} ( x) = f^{-} \Bigl( \frac{kx + 1 }{2}  \Bigr) 
\end{gather}  
 which leads to $ \Xi_1 ( x )   = 2 k ( - x +1 )$. This proves Theorem \ref{theo:dev_part} for $m=1$. 

Assume now that Theorem \ref{theo:dev_part} has been proved for some even $m \geqslant 2$. So the restriction of $\Xi_m $ to $[0,2] \cap \frac{2}{k} \Z $ is a linear combination of the monomials $ k^ q x^p$ where $ p \leqslant q \leqslant m$. Then, the function $f_m^+ \in \C^{S_k}$ corresponding to  $\Xi_m $ is a linear combination of the monomials $ k^ {q-p} \ell^p$ where $ p \leqslant q \leqslant m$. Now, it is a well-known consequence of Euler-Maclaurin formula that for any $p  \in \N$, there exists a polynomial $Q_p$ with degree $p+1$ and vanishing at $0$ such that 
\begin{gather} \label{eq:sompol} 
 \sum_{m = 1 } ^{\ell} m ^p = Q_p ( \ell ) , \qquad \forall \ell \in \N  
\end{gather}
Let $f \in \C^{S_k}$ be given by $f( \ell) = \ell^p$. Then $\tilde{L}( f) ( \ell) = Q_p ( \ell)$. 
Applying (\ref{eq:sompol}) to the monomials of $Q_p$, we obtain a polynomial $R_p$ of degree $p+ 2$, vanishing at $0$ and such that  
$$ \sum_{m = 1 } ^{\ell} Q_p ( m) = R_{p } ( \ell) , \qquad \forall \ell \in \N   
$$
Consequently $ L ( f) ( \ell ) = 2Q_p ( \ell) - 2k^{-1}R_{p } (k) $, so that 
$$ L( k^{q-p} f ) ( p) = 2k^{q-p} Q_p ( \ell ) -  2k^{q-p-1} R_p ( k) . $$ 
Since $R_p(0) = 0$, $k^{-1} R_p ( k)$ is polynomial in $k$ with degree $p+1$.  
This proves that $L ( f_m)$ is a linear combination of the monomials $ k^ {q-p} \ell^p$ where $ p \leqslant q \leqslant m +1 $. Applying the relation (\ref{eq:rel2}), we obtain that the restriction of $\Xi_{m+2} $ to $[0,2] \cap \frac{1}{k} ( 1+ 2\Z)$ is  a linear combination of the monomials $ k^ {q-p + p'} x^{p'}$ where $ p' \leqslant p \leqslant q \leqslant m +1 $. Equivalently it is a linear combination of the $ k^ {q} x^{p} $ where $ p \leqslant q \leqslant m +1 $, which proves Theorem \ref{theo:dev_part} for $m+1$. 

Similarly we can show that the result for $m$ odd implies the result for $m+1$. To do this, we identify $\mathcal {H}_0^+$ and $\mathcal{H}_0^-$ with $\C ^{S_k}$ by using the relations
 \begin{gather*}
 f^{+} ( \ell ) = g^+\Bigl( \frac{2\ell}{k } \Bigr) , \qquad f^{-} ( \ell ) = g^{-} \Bigl(  \frac{2 \ell -1 } { k}  \Bigr) .
\end{gather*} 
instead of (\ref{eq:rel1}). Then the inverse of $\Delta : \mathcal{H}^+_0  \rightarrow \mathcal{H}^-_0$ is still given by $L$. The remainder of the proof is unchanged. 

\subsection{Further properties of the polynomials $P_{m}$} \label{sec:furth-prop-polyn}

For any $m$, let us write $P_m = P_m^{+} + P_m ^{-}$, where $P_m^{+}$ (resp. $P_m ^{-}$) is a linear combination of the monomials $ k^q x^{2 \ell}$ (resp. $ k^q x^{2 \ell + 1}$). 
\begin{prop} \label{eq:sing_part}
For any $m \in \N^*$, we have 
$$ \frac{1}{2} \bigl( P_m  ( k ,x ) - P_m ( k, 2 + x ) \bigr) = \begin{cases} P^-_m ( k, x) \quad \text{ if $m$ is even,} \\ P^+_m ( k, x) \quad \text{ if $m$ is odd.} 
\end{cases}$$
\end{prop}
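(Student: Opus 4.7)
My strategy is to derive the identity from a single symmetry of $\Xi_{m,k}$, then to translate that symmetry into a polynomial identity on $P_m$, and finally to split into even/odd powers of $x$.

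First I would establish the symmetry $\Xi_{m,k}(-x) = (-1)^m \Xi_{m,k}(x)$ directly from the definition (\ref{eq:defXi}): substituting $y \mapsto -y$ in $R_k \setminus \{0,1\}$ changes $[\sin(\pi y)]^{-m}$ by a factor $(-1)^m$, while $e^{-ik\pi y x}$ becomes $e^{ik\pi y x}$ after using $e^{2ik\pi x} = 1$ (valid since $kx \in \Z$ for $x \in R_k$). Combining this with the obvious $2\Z$-periodicity $\Xi_{m,k}(2-x) = \Xi_{m,k}(-x)$ yields
\[
\Xi_{m,k}(2-x) = (-1)^m \Xi_{m,k}(x), \qquad x \in R_k.
\]

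Next I would translate this into a statement on $P_m$ using Theorem \ref{theo:dev_part}. For $x \in [0,2]\cap \tfrac{1}{k}\Z$ we also have $2-x \in [0,2]\cap \tfrac{1}{k}\Z$, and since $(-1)^{k(2-x)+m} = (-1)^{kx+m}$, the theorem gives
\[
\bigl(1+(-1)^{kx+m}\bigr)\bigl[P_m(k,2-x) - (-1)^m P_m(k,x)\bigr] = 0.
\]
Whenever $kx+m$ is even, the prefactor is $2$ and we obtain $P_m(k,2-x) = (-1)^m P_m(k,x)$ pointwise. The key technical step is then to promote this pointwise relation to a polynomial identity in $\Q[k,x]$: for each fixed $k$, the polynomial $R(k,x):= P_m(k,2-x)-(-1)^m P_m(k,x)$ has degree at most $m$ in $x$ and vanishes at the $\geqslant k$ values of $x$ for which $kx+m$ is even in $[0,2]\cap \tfrac{1}{k}\Z$. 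For $k>m$ this forces $R(k,\cdot)\equiv 0$, so $R$ vanishes for infinitely many $k$, hence identically in $\Q[k,x]$.

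Substituting $x \mapsto -x$ in the identity just obtained gives $P_m(k,2+x) = (-1)^m P_m(k,-x)$. By definition of the even/odd decomposition, $P_m(k,-x) = P_m^+(k,x) - P_m^-(k,x)$, so
\[
P_m(k,2+x) = (-1)^m\bigl[P_m^+(k,x)-P_m^-(k,x)\bigr].
\]
Plugging into $\tfrac{1}{2}\bigl(P_m(k,x)-P_m(k,2+x)\bigr)$ with $P_m = P_m^++P_m^-$, the $P_m^+$ terms cancel and the $P_m^-$ terms double up when $m$ is even, and vice versa when $m$ is odd, which is the claimed formula. The only real obstacle is the passage from the pointwise relation (valid only on the parity-restricted grid) to the polynomial identity; but as explained above this is handled by a standard Vandermonde-style argument once one notices that the prescribed degree bound $\deg_x P_m \leqslant m$ from Theorem \ref{theo:dev_part} is much smaller than the number $\sim k$ of available zeros.
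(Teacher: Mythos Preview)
Your proof is correct and follows essentially the same route as the paper: establish the parity $\Xi_{m,k}(2-x)=(-1)^m\Xi_{m,k}(x)$, translate it via Theorem~\ref{theo:dev_part} into the polynomial identity $P_m(k,2-x)=(-1)^mP_m(k,x)$, then substitute $x\mapsto -x$ and split into even/odd parts. The paper simply compresses your degree-counting step into the one-line remark ``since $P_m$ is polynomial in $k$ and $x$, this equality actually holds for any $k$ and $x$'', and derives the parity of $\Xi_m$ from that of its Fourier transform rather than by the direct change of variable (incidentally, in your parity argument the appeal to $e^{2ik\pi x}=1$ is unnecessary: the substitution $y\mapsto -y$ alone already turns $e^{-ik\pi yx}$ into $e^{ik\pi yx}$).
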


\begin{proof}
The discrete Fourier transform $\mathcal{F}( \Xi_m)$ has the same parity as $m$, so the same holds for $\Xi_m$. This implies that for any $x \in [0,2] \cap \frac{1}{k} \Z$ such that $kx $ has the same parity as $m$, we have 
$$ P_m( k, 2- x ) = ( -1)^m P_m( k, x)$$
Since $P_m$ is polynomial in $k$ and $x$, this equality actually holds for any $k$ and $x$. So we have
$$ P_m ( k, x ) - P_m ( k, 2 + x ) = P_m ( k,x ) - ( -1)^m P_m (k,  -x) ,
$$
which concludes the proof.
\end{proof}

\begin{prop} \label{prop:calcul_pm}
For any $m \geqslant 2$, we have 
\begin{gather} \label{eq:rec_Pm} 
 \sum_{\ell =1 }^{\infty} \frac{k^{-2 \ell +1 }}{(2 \ell -1)!} \Bigl( \frac{d}{dx} \Bigr)^{2 \ell - 1 } P_{m} ( k, x) =  P_{m-1} ( k, x).
\end{gather}
Furthermore, if $m$ is even, 
\begin{gather} \label{eq:pair_init}
 k \int_0^2 P_m ( k, x ) \; dx =  4 \sum_{\ell=1}^{\infty} \frac{B_{2 \ell} }{ ( 2 \ell ) !} \Bigl( \frac{2}{k} \Bigr)^{ 2 \ell -1 } \Bigl( \frac{d}{dx} \Bigr)^{2 \ell - 1 } P_m^- ( k,0) 
\end{gather}
where the $B_{\ell}$ are the Bernouilli numbers. If $m$ is odd,  
\begin{gather} \label{eq:impair_init} P_{m} (k, \tfrac{1}{k} ) =  P_{m-1} ( k,0) .
\end{gather} 
\end{prop}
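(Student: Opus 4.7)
All three identities will be derived from the single identity $\Delta \Xi_m = \Xi_{m-1}$ together with the explicit form $\Xi_m(x) = \bigl(1 + (-1)^{kx + m}\bigr) P_m(k,x)$ from Theorem \ref{theo:dev_part}.

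For the recursion (\ref{eq:rec_Pm}), I would compute $\Delta \Xi_m$ directly. Since $L$ and $L^{-1}$ both send $(-1)^{kx}$ to $-(-1)^{kx}$, one gets
\[
L \Xi_m (x) = \bigl(1 - (-1)^{kx+m}\bigr) P_m(k, x + \tfrac{1}{k}), \qquad L^{-1} \Xi_m (x) = \bigl(1 - (-1)^{kx+m}\bigr) P_m(k, x - \tfrac{1}{k}),
\]
so $\Delta \Xi_m(x) = \tfrac{1}{2}\bigl(1 - (-1)^{kx+m}\bigr)\bigl(P_m(k, x+\tfrac{1}{k}) - P_m(k, x-\tfrac{1}{k})\bigr)$. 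Since $P_m$ is polynomial, the Taylor expansion is exact and only the odd derivatives survive the difference, giving $2 \sum_{\ell \geqslant 1} k^{-(2\ell-1)}/(2\ell-1)!\; \partial_x^{2\ell-1} P_m$. Equating this to $\Xi_{m-1}(x) = \bigl(1 - (-1)^{kx+m}\bigr) P_{m-1}(k,x)$ and dividing by the nonzero prefactor on the lattice points of the opposite parity yields (\ref{eq:rec_Pm}) on infinitely many values of $x$; since both sides are polynomial in $x$ and $k$, it is a polynomial identity.

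For the odd-$m$ identity (\ref{eq:impair_init}), I would first establish the parity $\Xi_m(-x) = (-1)^m \Xi_m(x)$ by substituting $y \mapsto -y$ (equivalently $2 - y$) in the Fourier sum (\ref{eq:defXi}) and using that $\sin$ is odd; for odd $m$, $\Xi_m$ is an odd function on $R_k$. Evaluating $\Xi_{m-1}(0) = (\Delta \Xi_m)(0) = \tfrac{1}{2}(\Xi_m(\tfrac{1}{k}) - \Xi_m(-\tfrac{1}{k})) = \Xi_m(\tfrac{1}{k})$. Since $m$ is odd, the prefactor $(1 + (-1)^{kx + m})$ equals $2$ at $x = \tfrac{1}{k}$ (where $kx$ is odd), giving $\Xi_m(\tfrac{1}{k}) = 2 P_m(k, \tfrac{1}{k})$; and since $m-1$ is even, the prefactor equals $2$ at $x = 0$ as well, giving $\Xi_{m-1}(0) = 2 P_{m-1}(k,0)$. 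Comparing yields (\ref{eq:impair_init}).

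For the even-$m$ identity (\ref{eq:pair_init}), I would start from $\mathcal{F}(\Xi_m)(0) = 0$, which by definition of the scalar product gives $\sum_{x \in R_k} \Xi_m(x) = 0$. For $m$ even the summand vanishes at $kx$ odd and equals $2 P_m(k,x)$ at $kx$ even, so the identity becomes $\sum_{j=0}^{k-1} P_m(k, 2j/k) = 0$. I would then apply the Euler--Maclaurin formula to $\sum_{j=0}^{k} f(j)$ with $f(j) = P_m(k, 2j/k)$; since $P_m$ is polynomial the series terminates, and after the substitution $x = 2j/k$ the integral term becomes $\tfrac{k}{2}\int_0^2 P_m(k,x)\,dx$. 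The two endpoint contributions cancel because Proposition \ref{eq:sing_part} combined with $P_m(k,2-x) = (-1)^m P_m(k,x) = P_m(k,x)$ gives $P_m(k,2) = P_m(k,0)$, while differentiating the same symmetry yields $P_m^{(2\ell-1)}(k,2) = -P_m^{(2\ell-1)}(k,0)$, so the Bernoulli correction collapses to $-2\sum_{\ell\geq 1}(B_{2\ell}/(2\ell)!)(2/k)^{2\ell-1} P_m^{(2\ell-1)}(k,0)$. Finally, $P_m^+$ has only even powers of $x$, so its odd derivatives vanish at $0$ and $P_m^{(2\ell-1)}(k,0) = P_m^{-,(2\ell-1)}(k,0)$. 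Rearranging gives (\ref{eq:pair_init}).

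The most delicate step is the Euler--Maclaurin argument in the last paragraph: one must carefully keep track of which boundary contributions cancel (via symmetry about $x=1$) and which double up, and the identification of $P_m^{(2\ell-1)}(k,0)$ with $P_m^{-,(2\ell-1)}(k,0)$ is what aligns the output with the stated form. The other two parts are essentially formal consequences of $\Delta \Xi_m = \Xi_{m-1}$ and the symmetries of the Fourier coefficients.
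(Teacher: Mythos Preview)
Your proposal is correct and follows essentially the same route as the paper: all three identities come from $\Delta\Xi_m=\Xi_{m-1}$, the parity $\Xi_m(-x)=(-1)^m\Xi_m(x)$, and the vanishing average of $\Xi_m$, with Taylor expansion giving (\ref{eq:rec_Pm}) and Euler--Maclaurin giving (\ref{eq:pair_init}). The only cosmetic difference is that you handle the Euler--Maclaurin endpoint terms via the symmetry $P_m(k,2-x)=P_m(k,x)$ directly, whereas the paper phrases the same cancellation through Proposition~\ref{eq:sing_part}; both yield $P_m^{(2\ell-1)}(k,2)-P_m^{(2\ell-1)}(k,0)=-2\,\partial_x^{2\ell-1}P_m^-(k,0)$.
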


\begin{proof} 
Since $\Delta \Xi_{m} = \Xi_{m-1}$, we have for any $x \in [0,2] \cap \frac{1}{k} \Z$ such that $kx$ has the same parity as $m-1$, 
$$\tfrac{1}{2} \bigl(  P_{m} (k, x + \tfrac{1}{k} ) - P_{m} (k, x - \tfrac{1}{k} ) \bigr) = P_{m-1}(k, x ) .$$
$P_{m}$ and $P_{m-1}$ being polynomials the same equality holds for any $x$ and $k$. We obtain Equation (\ref{eq:rec_Pm}) by applying Taylor formula. 

Recall that the average of $\Xi_m$ vanishes. For even $m$ this implies that 
$\sum_{\ell = 1}^{k} P_{m} ( k, \tfrac{2\ell}{k}  ) = 0 .$ By Euler-Maclaurin formula, we have
\begin{xalignat*}{2}
 \sum_{\ell = 1}^{k} P_{m} ( k, \tfrac{2\ell}{k}  ) = & \int_0 ^k P_m (k,  \tfrac{2x}{k} ) dx + \frac{1}{2} \bigl( P_m ( k, 2 ) - P_m ( k,0) \bigr) \\  + &  \sum_{ \ell \geqslant 1} \frac{B_{2\ell}}{ ( 2 \ell)!} \Bigl( \frac{2 }{k} \Bigr)^{ 2 \ell -1 }  \Bigl[ \Bigl( \frac{d}{dx} \Bigr)^{2 \ell - 1 } P_m ( k,  2 ) - \Bigl( \frac{d}{dx} \Bigr)^{2 \ell - 1 } P_m ( k,0 ) \Bigr]
\end{xalignat*} 
Using that $P_m (  k,2) = \Xi_{m, k } ( 0 ) = P_m ( k,0)$ and Proposition \ref{eq:sing_part}, we obtain Equation (\ref{eq:pair_init}). 

If $m$ is odd, $\Delta \Xi_{m} = \Xi _{m-1}$ implies that 
$$ \tfrac{1}{2} \bigl( P_m ( k, \tfrac{1}{k} ) -  P_m ( k, - \tfrac{1}{k}  ) \bigr) = P_{m-1} (k,0).$$
Since $\Xi_m $ is also odd, we have $P_m (k , - \tfrac{1}{k} ) = - P_m ( k, \tfrac{1}{k})$ which proves Equation (\ref{eq:impair_init}).
\end{proof} 

Proposition \ref{prop:calcul_pm} allows to compute the $P_m$'s inductively. Indeed, we have the following 
\begin{prop} \label{prop:recurrence}
For any $m \geqslant 2$, $P_{m}$ is the unique solution in $\C [ k,x]$ of Equations (\ref{eq:rec_Pm}) and (\ref{eq:pair_init}) (resp. (\ref{eq:impair_init})) if $m$ is even (resp. odd).
\end{prop}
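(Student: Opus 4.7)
The existence half is just the content of Proposition \ref{prop:calcul_pm}, so the plan focuses entirely on uniqueness. Given another solution $Q \in \C[k,x]$, I would work with $R := Q - P_m$ and show $R = 0$. Both constraints are linear in $P_m$ (Equation (\ref{eq:rec_Pm}) trivially, and (\ref{eq:pair_init}) because $P_m \mapsto P_m^-$ is linear), so $R$ satisfies the homogeneous version of (\ref{eq:rec_Pm}) together with the homogeneous version of (\ref{eq:pair_init}) or (\ref{eq:impair_init}), according to the parity of $m$.

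The first main step is to re-interpret the homogeneous version of (\ref{eq:rec_Pm}) as the symmetric-difference identity $\tfrac{1}{2}\bigl( R(k, x+\tfrac{1}{k}) - R(k, x-\tfrac{1}{k})\bigr) = 0$. Since $R$ is a polynomial in $x$, the Taylor expansion used in the proof of Proposition \ref{prop:calcul_pm} is a finite sum and the two forms are literally equivalent as identities in $\C[k, k^{-1}, x]$. From $R(k, x + \tfrac{1}{k}) = R(k, x - \tfrac{1}{k})$ one deduces $R(k, x + \tfrac{2}{k}) = R(k, x)$, and specializing to any nonzero complex value of $k$ produces a polynomial in $x$ with a nontrivial period, which therefore must be constant. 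Hence every coefficient of $R$ of positive $x$-degree vanishes at each $k \in \C^*$ and so vanishes identically, giving $R(k,x) = c(k)$ for some $c \in \C[k]$.

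The second step is to plug this constant-in-$x$ $R$ into the initial condition. In the even case, $c(k)$ has no odd-degree monomials in $x$, so $R^- = 0$ and the right-hand side of (\ref{eq:pair_init}) vanishes; the left-hand side reduces to $2k\, c(k)$, forcing $c \equiv 0$. In the odd case, (\ref{eq:impair_init}) directly gives $c(k) = R(k, \tfrac{1}{k}) = 0$. Either way $R = 0$, and therefore $Q = P_m$.

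I do not expect a genuine obstacle. The only point that calls for care is checking that the passage from the series form (\ref{eq:rec_Pm}) back to the symmetric-difference form is valid at the level of identities in $\C[k, k^{-1}, x]$ rather than merely on the sub-lattice used in the original derivation; this is automatic because the Taylor series terminates on polynomials. Everything else is elementary linear algebra in $\C[k][x]$, and the whole argument is essentially self-dual to the proof of Proposition \ref{prop:calcul_pm}.
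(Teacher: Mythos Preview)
Your proof is correct. Both arguments reach the same intermediate conclusion—that the kernel of the operator on the left of (\ref{eq:rec_Pm}) inside $\C[k,x]$ consists exactly of polynomials in $k$ alone—and then use (\ref{eq:pair_init}) or (\ref{eq:impair_init}) to kill that remaining freedom. The route to that intermediate conclusion differs, however. The paper expands a candidate as $\sum_j k^j Q_j(x)$ and reads (\ref{eq:rec_Pm}) as a triangular system that determines each $DQ_j$ (hence each $Q_j$ up to an additive constant); you instead recognise the homogeneous left-hand side as the exact Taylor expansion of the symmetric difference $\tfrac12\bigl(R(k,x+\tfrac1k)-R(k,x-\tfrac1k)\bigr)$, deduce periodicity of $R(k_0,\cdot)$ for each $k_0\neq 0$, and conclude constancy in $x$. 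Your argument is slicker and avoids bookkeeping in powers of $k$; the paper's version has the minor advantage of indicating constructively how one would solve for the $Q_j$'s, which is in the spirit of the inductive computation of the $P_m$. For the second step you are also more explicit than the paper's ``arguing similarly'': plugging $R=c(k)$ into (\ref{eq:pair_init}) gives $2k\,c(k)=0$ since $R^-=0$, and into (\ref{eq:impair_init}) gives $c(k)=0$ directly.
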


\begin{proof} 
Write 
$ P_m ( k,x) = k^{r} Q_r ( x) + k^{r-1} P_{r-1} ( x) + \ldots + Q_0 (x)$. Denote by $D$ the derivation $\frac{d}{dx}$. Then 
\begin{xalignat*}{2} 
  \sum_{\ell =1 }^{\infty} \frac{k^{-2 \ell +1 }}{(2 \ell -1)!} D^{2 \ell - 1 } P_{m} ( k, \cdot ) = & k^{r-1} D Q_r  + k^{r-2} DQ_{r-1}    +   k^{r-3 }( DQ_{r-2}   + \\ &   \tfrac{1}{3!} D^3 Q_{r}  ) + k^{r-4} ( DQ_{r-3} + \tfrac{1}{3!} D^3 Q_{r-1}) + \ldots \\ = &  \sum_{\ell =0} ^{r} k^{r -\ell-1 }( D Q_{r -\ell} +  R_{\ell}) + \bigo(k^{-2} )  
\end{xalignat*}
where for any $ 0 \leqslant \ell \leqslant r$, $R_{\ell} \in \C [x]$ only depends on $Q_{r}, Q_{r-1}, \ldots , Q_{r-\ell +1}$. 
So Equation (\ref{eq:rec_Pm}) leads to a triangular system of equations for the $DQ_{r-\ell}$'s. We conclude that $P_m$ is the unique solution of Equation (\ref{eq:rec_Pm}) up to some polynomial in $k$. Arguing similarly, we prove  that this latter polynomial is uniquely determined by Equation (\ref{eq:pair_init}) or (\ref{eq:impair_init}) according to the parity of $m$.
\end{proof}

\begin{prop} \label{prop:par_k}
For any $m \in \N^*$,  $P_m $ is a linear combination of the monomials $ k^q x^p$, where $p,q$ run over the integers such that $p \leqslant q \leqslant m$ and $q$ has the same parity of $m$. 
\end{prop}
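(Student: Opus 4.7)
The plan is to prove Proposition \ref{prop:par_k} by induction on $m$, invoking Proposition \ref{prop:recurrence}: $P_m$ is the unique element of $\C[k,x]$ satisfying both (\ref{eq:rec_Pm}) and the appropriate normalizing condition. The base case $m=1$ is immediate from $P_1(k,x) = -kx + k$. Let $\mathcal{P}_m \subset \C[k,x]$ denote the subspace spanned by monomials $k^q x^p$ with $p \leqslant q \leqslant m$ and $q \equiv m \pmod 2$. Assuming $P_{m-1} \in \mathcal{P}_{m-1}$, it suffices to exhibit some $\tilde P \in \mathcal{P}_m$ satisfying both the recursion and the normalizing condition; uniqueness will then force $\tilde P = P_m$.

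First I would verify that the operator $\Phi_m := \sum_{\ell \geqslant 1} \frac{k^{-2\ell+1}}{(2\ell-1)!} D_x^{2\ell-1}$ sends $\mathcal{P}_m$ into $\mathcal{P}_{m-1}$ (a direct check on a monomial $k^q x^p$: the image contains monomials $k^{q-2\ell+1} x^{p-2\ell+1}$, and $q-2\ell+1$ has parity $m-1$). The leading term $\ell=1$ gives $p \cdot k^{q-1} x^{p-1}$, so matching the coefficient of $k^{q'} x^{p'}$ in $\Phi_m(\tilde P_0) = P_{m-1}$ and working from the highest $q'$ downward forces every coefficient $c_{q,p}$ of $\tilde P_0$ with $p \geqslant 1$, while the coefficients $b_q := c_{q,0}$ of the pure $k$-monomials (with $q$ of parity $m$, $0 \leqslant q \leqslant m$) remain free parameters.

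Next I would fix the $b_q$'s via the normalizing condition. For $m$ even, the right-hand side of (\ref{eq:pair_init}) involves only $\tilde P^- = \tilde P_0^-$ and so is independent of the $b_q$'s; a parity count shows both sides are polynomials containing only odd powers of $k$, and the contributions $2 b_q k^{q+1}$ land in distinct odd degrees, so matching coefficients determines the $b_q$'s uniquely. For $m$ odd, I would split $\tilde P_0(k, 1/k) = E_0(k) + O_0(k)$ into its $k$-even and $k$-odd parts: since $k^q x^p|_{x=1/k} = k^{q-p}$ and $q$ is odd, $E_0$ collects the terms with $p$ odd and $O_0$ those with $p$ even. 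The crucial observation is
\begin{equation*}
E_0(k) \;=\; \sum_{p \text{ odd}} A_p(k)\, k^{-p} \;=\; \bigl[\Phi_m(\tilde P_0)\bigr](k,0) \;=\; P_{m-1}(k,0),
\end{equation*}
where $A_p(k)$ denotes the coefficient of $x^p$ in $\tilde P_0$; the middle equality is obtained by reading off the coefficient of $x^0$ in $\Phi_m(\tilde P_0)$. Since $P_{m-1}(k,0)$ is $k$-even by the inductive hypothesis, the $k$-even half of (\ref{eq:impair_init}) is automatic, while its $k$-odd half, $O_0(k) + \sum_q b_q k^q = 0$, determines the $b_q$'s uniquely.

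The main obstacle is this automatic matching $E_0(k) = P_{m-1}(k,0)$ in the odd case: without it the normalizing condition restricted to $\mathcal{P}_m$ would be overdetermined and no $\tilde P \in \mathcal{P}_m$ could exist. Once it is in hand, uniqueness from Proposition \ref{prop:recurrence} closes the induction.
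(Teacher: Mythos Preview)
Your argument is correct, but the paper takes a considerably shorter route. Rather than constructing an element of $\mathcal{P}_m$ from scratch, the paper observes that the statement is equivalent (given Theorem~\ref{theo:dev_part}) to the parity relation $P_m(-k,x)=(-1)^m P_m(k,x)$, and then simply checks that $(-1)^m P_m(-k,x)$ satisfies the same equations (\ref{eq:rec_Pm}) and (\ref{eq:pair_init}) or (\ref{eq:impair_init}) as $P_m$; uniqueness from Proposition~\ref{prop:recurrence} then gives the result in one stroke. The only point requiring care is (\ref{eq:impair_init}) for $m$ odd, where one needs the oddness $P_m(k,-\tfrac{1}{k})=-P_m(k,\tfrac{1}{k})$, itself a consequence of the oddness of $\Xi_m$.

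What your approach buys is an explicit explanation of \emph{why} the normalizing condition is compatible with the subspace $\mathcal{P}_m$: your identity $E_0(k)=[\Phi_m(\tilde P_0)](k,0)=P_{m-1}(k,0)$ makes transparent that, in the odd case, the $k$-even half of (\ref{eq:impair_init}) is automatically satisfied once the recursion is. The paper's symmetry trick hides this mechanism but reaches the conclusion with almost no computation. Both proofs hinge on the uniqueness statement of Proposition~\ref{prop:recurrence}; yours applies it to a hand-built candidate in $\mathcal{P}_m$, the paper's applies it to the substitution $k\mapsto -k$.
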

\begin{proof}
We have to prove that $P_m ( -k, x) = ( -1)^m P_m ( k, x)$. Assume the result holds for $m-1$. Then we easily see that $P_m ( -k , x)$ satisfies the Equations of Proposition \ref{prop:calcul_pm}. To check Equation (\ref{eq:impair_init}) in the case $m$ is odd, we use that $P_m ( k, -\frac{1}{k} ) = -P_m ( k, \frac{1}{k} )$. We conclude with Proposition \ref{prop:recurrence}.
\end{proof}

\begin{prop} \label{prop:sing_part}
For any even $m \geqslant 2$ (resp. odd $m \geqslant 1$), $P_m^-$ (resp. $P^+_m$) is a linear combination of the monomials $k^{m-2 \ell} x^{m- 2\ell -1}$, with $\ell = 0 ,1 \ldots$. The coefficient of $k^{m} x^{m-1}$ is $ 1 /  (m-1)!$. 
\end{prop}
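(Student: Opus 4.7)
The plan is to prove both claims by induction on $m$, exploiting the key algebraic feature that the recurrence (\ref{eq:rec_Pm}) respects the ``diagonal'' filtration by $d = q - p$. Writing $D = d/dx$, one computes $\frac{k^{1-2\ell}}{(2\ell-1)!} D^{2\ell-1}(k^{p+d} x^p) = \binom{p}{2\ell-1} k^{(p-2\ell+1)+d} x^{p-2\ell+1}$, which stays on diagonal $d$. Decomposing $P_m = \sum_d P_m^{(d)}$ accordingly, the recurrence decouples into $P_{m-1}^{(d)} = \sum_{\ell \geq 1} \frac{k^{1-2\ell}}{(2\ell-1)!} D^{2\ell-1} P_m^{(d)}$ for each $d$. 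Since $D^{2\ell-1}$ flips the parity of $x$-exponents, this refines into
\[
P_{m-1}^{-} = \sum_{\ell \geq 1} \tfrac{k^{1-2\ell}}{(2\ell-1)!} D^{2\ell-1} P_m^{+}, \qquad P_{m-1}^{+} = \sum_{\ell \geq 1} \tfrac{k^{1-2\ell}}{(2\ell-1)!} D^{2\ell-1} P_m^{-},
\]
each still decoupled by diagonal. By Proposition \ref{prop:par_k}, monomials of $P_m$ satisfy $q \equiv m \pmod 2$, so the singular part studied here ($P_m^-$ for even $m$, $P_m^+$ for odd $m$) is exactly the part of $P_m$ supported on odd diagonals. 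The statement thus reduces to showing $P_m^{(d)} = 0$ for every odd $d \geq 3$, together with the leading-coefficient formula on $d = 1$.

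I proceed by induction on $m$, with base cases $m = 1, 2$ immediate from the explicit formulas in Section \ref{sec:proof-theor-refth}. Writing $P_m^{(d)} = \sum_{p=0}^{m-d} c_p^{(m,d)} k^{p+d} x^p$, the decoupled recurrence reads
\[
c_{p'}^{(m-1,d)} = \sum_{\ell \geq 1} \binom{p'+2\ell-1}{2\ell-1}\, c_{p'+2\ell-1}^{(m,d)}.
\]
Assume inductively $P_{m-1}^{(d)} = 0$ for all odd $d \geq 3$. Reading this linear system from the largest admissible $p'$ downward, at each step only $\ell = 1$ pairs with a coefficient not yet forced to vanish, and the resulting triangular cascade forces $c_p^{(m,d)} = 0$ for every $p \geq 1$. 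For even $m$, the singular part $P_m^{-}$ has $p$ odd, so $p = 0$ is absent and $P_m^{-,(d)} = 0$ directly. For odd $m$, the coefficient $c_0^{(m,d)}$ remains; I eliminate it via the initial condition (\ref{eq:impair_init}), which after substituting $x = 1/k$ becomes $\sum_p c_p^{(m,d)} = c_0^{(m-1,d)}$ for each $d$. For odd $d \geq 3$ the right-hand side vanishes by the parity constraint of Proposition \ref{prop:par_k} (the monomial $k^d$ of $P_{m-1}$ would demand $d \equiv m-1 \pmod 2$, contradicting $m, d$ both odd), and combined with the already-established vanishing of $c_p^{(m,d)}$ for $p \geq 1$ this forces $c_0^{(m,d)} = 0$, closing the induction.

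For the leading coefficient on diagonal $d = 1$, apply the decoupled recurrence at $p' = m - 2$: only $\ell = 1$ contributes (larger $\ell$ would require $c_p^{(m,1)}$ with $p \geq m$, outside the support), yielding $c_{m-2}^{(m-1,1)} = (m-1)\, c_{m-1}^{(m,1)}$. Combined with $c_0^{(1,1)} = 1$ from $P_1(k,x) = k(1-x)$, induction produces $c_{m-1}^{(m,1)} = 1/(m-1)!$, as claimed. The main obstacle throughout is that the pure-$k$ terms escape the recurrence (\ref{eq:rec_Pm}); fortunately this obstruction does not arise for even $m$ (no $p = 0$ in $P_m^-$), and for odd $m$ it is dispatched cleanly by (\ref{eq:impair_init}) together with the parity constraint. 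The more cumbersome initial condition (\ref{eq:pair_init}), whose integral mixes diagonals, turns out not to be needed.
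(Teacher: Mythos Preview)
Your proof is correct and follows essentially the same approach as the paper's: induction on $m$ via the recurrence (\ref{eq:rec_Pm}), with the integration constants killed by $x$-oddness in the even case and by (\ref{eq:impair_init}) combined with Proposition \ref{prop:par_k} in the odd case. Your diagonal filtration $d = q - p$ is a clean way to package what the paper does more implicitly: when the paper writes $A = P_m^- = \sum_j k^j A_j(x)$ and shows inductively that each $A_j$ is a monomial $a_j x^{j-1}$, this is exactly your statement that only the diagonal $d = 1$ survives; the paper's triangular system $DA_m = B_{m-1}$, $DA_{m-2} + \tfrac{1}{3!}D^3 A_m = B_{m-3}$, \ldots is your cascade on $c_p^{(m,d)}$ read off in the other index.
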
 

\begin{proof} Again the proof is by induction on $m$. Assume that $m$ is even and write $ A = P_m ^{-}$, $B = P_{m-1} ^{+}$. By Proposition \ref{prop:par_k}, we have
\begin{gather*} 
 A ( k, x ) = k^{m} A_m ( x) + k^{m-2} A_{m-2} ( x) + \ldots + A_0 ( x) , \\ \qquad B(k, x) = k^{m-1} B_{m-1} (x) + k^{m-3} B_{m-3} (x) + \ldots + k B_1 (x) .
\end{gather*}
We deduce from Equation (\ref{eq:rec_Pm}) that
\begin{gather*} 
 D A_m =  B_{m-1} , \qquad DA_{m-2} + \tfrac{1}{3!} D^3 A_m =  B_{m-3} , \qquad \ldots \\
D A_2 + \tfrac{1}{3!} D^3 A_4 + \ldots , \tfrac{1}{ (m-1)!} D^{m-1} A_m =  B_1 ,\\ DA_0+ \tfrac{1}{3!} D A_2 + \ldots + \tfrac{1}{ ( m +1 ) !} D^{m+1} A_m = 0 
\end{gather*} 
Assume that $ B_{\ell} (x) = b_{\ell} x^{\ell-1}$. Then these equations imply that $A_\ell ( x) = a_{\ell} x^{\ell-1} + a_{\ell}^0$. Since the $A_\ell$'s are odd, the constants $a_{\ell}^{0}$ vanish. 

Assume now that $m$ is odd and that the results holds for $m-1$. Arguing as above with Proposition \ref{prop:par_k} and Equation (\ref{eq:rec_Pm}), we prove that 
\begin{xalignat*}{2}
P_m ^+ ( k, x) =  & k^{m } ( c_m x^{m-1} + d_m ) + k^{m-2} ( c_{m-2} x^{m-3} + d_{m-2} ) + \ldots \\ + & k^{3} ( c_3 x^2 + d_3)  + k c_1
\end{xalignat*}
By Proposition \ref{prop:par_k}, $P_{m-1} ( k,0)$ is even. So Equation (\ref{eq:impair_init}) implies that $$P_m ^+ (k,  \tfrac{1}{k}  ) = 0.$$
We deduce that the $d_{\ell}$'s vanish and $c_m + c_{m-2} + \ldots + c_1 = 0$.  
\end{proof}

\subsection{Application to the counting function} \label{sec:appl-count-funct}

As a consequence of Verlinde formula, we have the following

\begin{lem} \label{lem:relat-with-count}
For any $k \in \N^*$, $\ell = 1 , \ldots , k-1$ and $g \in \N^*$, we have 
$$N^{g,k}_{\ell}  = C_g  k ^{ g -1 }  \Xi _{2g -1 } ( \ell / k ) $$
with $C_g =  (-1)^{g-1} 2 ^{-g}$.
\end{lem}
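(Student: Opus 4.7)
The strategy is straightforward bookkeeping: substitute the explicit sinusoidal form of $S_{m,p}$ into the Verlinde formula (Theorem \ref{theo:Verlinde}), and show that the resulting finite sum matches the defining sum of $\Xi_{2g-1,k}$ evaluated at $x=\ell/k$, up to the prefactor $C_g k^{g-1}$.

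First I would expand
$$ N^{g,k}_\ell = \sum_{m=1}^{k-1} S_{m,1}^{1-2g} S_{m,\ell} = \Bigl( \tfrac{2}{k}\Bigr)^{1-g} \sum_{m=1}^{k-1} \bigl[\sin(\pi m/k)\bigr]^{1-2g} \sin(\pi m \ell/k), $$
where the exponent $(1-g)$ on $2/k$ comes from $(1-2g)/2 + 1/2$. Next I would rewrite $\Xi_{2g-1,k}(\ell/k)$ using the definition (\ref{eq:defXi}): the index $y\in R_k \setminus\{0,1\}$ runs through $y = m/k$ for $m \in \{1,\ldots,2k-1\}\setminus\{k\}$, and the exponential becomes $e^{ik\pi (m/k)(\ell/k)} = e^{i\pi m\ell/k}$.

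The main computational step is to fold the sum over $\{1,\ldots,2k-1\}\setminus\{k\}$ into a sum over $\{1,\ldots,k-1\}$ by pairing $m$ with $2k-m$. The identities $\sin(\pi(2k-m)/k) = -\sin(\pi m/k)$ and $e^{i\pi(2k-m)\ell/k}=e^{-i\pi m\ell/k}$ (using $\ell\in\Z$) give
$$ \bigl[\sin(\pi(2k-m)/k)\bigr]^{1-2g} e^{i\pi(2k-m)\ell/k} = -\bigl[\sin(\pi m/k)\bigr]^{1-2g} e^{-i\pi m\ell/k}, $$
so the pair $(m, 2k-m)$ contributes $2i\bigl[\sin(\pi m/k)\bigr]^{1-2g}\sin(\pi m\ell/k)$. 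Thus
$$ \sum_{\substack{m=1 \\ m\neq k}}^{2k-1} \bigl[\sin(\pi m/k)\bigr]^{1-2g} e^{i\pi m\ell/k} = 2i \sum_{m=1}^{k-1} \bigl[\sin(\pi m/k)\bigr]^{1-2g} \sin(\pi m\ell/k). $$

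Finally I would track the constants. Using $i^{-(2g-1)} = (-i)^{2g-1} = -(-1)^{g-1} i = (-1)^{g}i$, multiplication by $2i$ produces the factor $2(-1)^{g-1}$. Combining with the first display,
$$ \Xi_{2g-1,k}(\ell/k) = 2(-1)^{g-1} \cdot \Bigl( \tfrac{k}{2} \Bigr)^{g-1} N^{g,k}_\ell = (-1)^{g-1} 2^{g} k^{1-g} N^{g,k}_\ell, $$
which rearranges to the claimed identity. The only step where care is needed is the sign and power of $i$ conversion; the rest is algebraic manipulation, so I don't expect any serious obstacle.
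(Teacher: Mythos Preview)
Your proposal is correct and follows essentially the same route as the paper: both substitute the explicit $S_{m,p}$ into the Verlinde formula and identify the resulting sum with the defining sum of $\Xi_{2g-1,k}$ via the odd symmetry $m\leftrightarrow 2k-m$. The paper goes in the opposite direction---it splits $\sin(\pi m\ell/k)=(e^{i\pi m\ell/k}-e^{-i\pi m\ell/k})/(2i)$ and recognises the two exponential pieces as the two halves of the sum over $R_k\setminus\{0,1\}$---but this is the same folding argument read backwards.

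One small slip: in your final display the middle expression should read $2(-1)^{g-1}\bigl(\tfrac{2}{k}\bigr)^{g-1}N^{g,k}_\ell$, not $\bigl(\tfrac{k}{2}\bigr)^{g-1}$; your subsequent simplification $(-1)^{g-1}2^{g}k^{1-g}N^{g,k}_\ell$ is nonetheless the correct one, so the conclusion stands.
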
 

\begin{proof}
We compute from Theorem \ref{theo:Verlinde} 
\begin{xalignat*}{2} 
N_{\ell }^{g,k}  = & \sum_{m= 1 } ^{k-1} S_{m,1} ^{1 - 2g} S_{m, \ell} =\frac{1}{2i} \Bigl( \frac{2}{k} \Bigr)^{1/2} \sum_{m=1}^{k-1} S_{m,1} ^{1 - 2g} ( e^{ i \pi \frac{m\ell}{k} } - e^{ -i \pi \frac{m\ell}{k} } )  \\
\intertext{ setting $ m = k y $ so that $S_{m, p}= \bigl( \frac{2}{k}\bigr)^{1/2}  \sin ( \pi y )$, we get} 
= & \frac{1}{2i} \Bigl( \frac{2}{k} \Bigr)^{1-g} \sum_{y \in R_k \setminus \{ 0 , 1\}}  \bigl[ \sin ( \pi y ) \bigr]^{1-2g} e^{i \pi y \ell }  \\
= & 2^{-g}(-k ) ^{g-1} i^{1-2g}  \sum_{y \in R_k \setminus \{ 0 , 1\}}  \bigl[ \sin ( \pi y ) \bigr]^{1-2g} e^{i \pi y \ell }
\end{xalignat*}
which ends the proof.
\end{proof} 

So we deduce from Theorem \ref{theo:dev_part} and the result of Section \ref{sec:furth-prop-polyn} the following 
\begin{theo} \label{theo:counting_smoot}
Let $g\in \N^*$. Then there exists a family of polynomial functions 
$P_{g, m} : [0,1] \rightarrow \R$, $m =0,1,\ldots, g-1$ such that for any $k \in \N^*$ and for any  odd integer $\ell$ satisfying $1 \leqslant \ell \leqslant k-1$, we have 
$$ N^{g, k }_{\ell}   = \Bigl( \frac{k}{2\pi} \Bigr)^{ 3 g -2 }  \sum _{m = 0 }^{g-1 } k^{-2m} P_{g, m} \Bigl( \frac{\ell}{k} \Bigr) . $$
Furthermore $P_{g, m}$ has degree $2(g -m) -1 $. The even part of $P_{g, m }$ is of the form $\la_{g,m} x^{ 2( g -m - 1)}$. For $m=0$, we have 
$$ \la_{g,0} = \frac{2 C_g ( 2 \pi)^{ 3g -2}}{ ( 2 ( g-1))!} .$$
\end{theo}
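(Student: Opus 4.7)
The plan is to feed Theorem \ref{theo:dev_part} (with $m = 2g-1$) into Lemma \ref{lem:relat-with-count} and then extract the finer structure from Propositions \ref{prop:par_k} and \ref{prop:sing_part}. For odd $\ell = kx$ we have $kx + (2g-1) = \ell + 2g - 1$ even, so the factor $1+(-1)^{kx+m}$ in Theorem \ref{theo:dev_part} equals $2$, giving
\[
N^{g,k}_{\ell} = 2 C_g\, k^{g-1}\, P_{2g-1}(k,\ell/k), \qquad \ell \text{ odd},\ 1\leq \ell \leq k-1.
\]

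Next, since $2g-1$ is odd, Proposition \ref{prop:par_k} asserts that $P_{2g-1}(k,x)$ is a linear combination of monomials $k^q x^p$ with $p\leq q \leq 2g-1$ and $q$ odd. Grouping by the powers of $k$ and writing $q = 2g-1-2m$ for $m=0,1,\ldots,g-1$, we obtain a decomposition
\[
P_{2g-1}(k,x) = \sum_{m=0}^{g-1} k^{\,2g-1-2m}\, Q_{g,m}(x),
\]
with $Q_{g,m}\in\Q[x]$ of degree at most $2(g-m)-1$. Substituting this into the preceding formula and extracting the prefactor $k^{3g-2}=(k/(2\pi))^{3g-2}(2\pi)^{3g-2}$ yields the announced expansion with
\[
P_{g,m}(x) := 2 C_g\, (2\pi)^{3g-2}\, Q_{g,m}(x).
\]

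The structural information on the even part now follows from Proposition \ref{prop:sing_part}: applied with $m=2g-1$, the even-in-$x$ part $P_{2g-1}^{+}$ is a linear combination of the monomials $k^{2g-1-2\ell} x^{2g-2-2\ell}$ for $\ell\geq 0$, the coefficient of $k^{2g-1}x^{2g-2}$ being $1/(2g-2)!$. Isolating the coefficient of $k^{2g-1-2m}$ forces the even part of $Q_{g,m}$ to be a scalar multiple of $x^{2(g-m-1)}$; hence the even part of $P_{g,m}$ has the form $\lambda_{g,m}\, x^{2(g-m-1)}$, and taking $m=0$ gives the stated
\[
\lambda_{g,0}=\frac{2 C_g\, (2\pi)^{3g-2}}{(2(g-1))!}.
\]
The bound $\deg P_{g,m}\leq 2(g-m)-1$ is already built into the construction via Proposition \ref{prop:par_k}. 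The only nontrivial remaining point — and the one I expect to be the main obstacle — is the lower bound $\deg P_{g,m}= 2(g-m)-1$, i.e.\ the nonvanishing of the coefficient of $k^{2g-1-2m}x^{2(g-m)-1}$ in $P_{2g-1}$. I would establish it by induction on $m$ using the recursion (\ref{eq:rec_Pm}) of Proposition \ref{prop:calcul_pm}, starting from the explicit nonvanishing leading monomial of $P_{2g-1}$ (whose coefficient is $-1/(2g-1)!$, as suggested by the examples $P_1,P_2,P_3$), and propagating nonvanishing of the top-$x$-degree term at each power of $k$ through the triangular system produced by applying $D^{2\ell-1}$ to $P_{2g-1}$.
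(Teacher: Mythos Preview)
Your proof is correct and follows exactly the paper's route: the paper merely states that the theorem is ``deduced from Theorem \ref{theo:dev_part} and the results of Section \ref{sec:furth-prop-polyn}'', and you have spelled out precisely how Lemma \ref{lem:relat-with-count}, Theorem \ref{theo:dev_part}, Proposition \ref{prop:par_k}, and Proposition \ref{prop:sing_part} combine to give the expansion, the monomial form of the even part, and the value of $\lambda_{g,0}$.

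Your observation that the cited results only yield $\deg P_{g,m}\leqslant 2(g-m)-1$ rather than equality is well taken --- the paper does not address this point either, and nothing later in the paper uses more than the upper bound together with the structure of the even part. Your sketched inductive argument via the recursion (\ref{eq:rec_Pm}) is plausible but would need to be carried out across the whole chain $P_{2g-1},P_{2g-2},\ldots$ rather than within $P_{2g-1}$ alone; in practice the statement should probably be read as ``degree at most $2(g-m)-1$''.
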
 

The polynomials $P_{g,m}$ will be expressed in Section \ref{sec:symplectic-volumes} as integrals of characteristic classes on some moduli spaces. In particular $P_{g,0} (s)$ is a symplectic volume.

\section{Geometric quantization of tori and semiclassical limit} 
\label{sec:geom-quant-tori}
\subsection{The quantum spaces} \label{sec:quantum-spaces}

Let $(E,\om)$ be a real two-dimensional symplectic vector space. Let $R$ be a lattice of $E$ with volume $4\pi$. Let $L_E = E \times \C$ be the trivial Hermitian line bundle over $E$. Endow $L_E$ with the connection $d + \frac{1}{i} \al$ where $\al \in \Om^1 ( E, \C)$ is given by
$$ \al_x ( y) = \frac{1}{2} \om ( x, y). $$
Consider the Heisenberg group $E \times U(1)$ with the product 
\begin{gather} \label{eq:action_prod}
 (x, u) . ( y,v) = ( x+ y, uv e^{i \om ( x, y) /2})
\end{gather}
This groups acts on $L_E$ by preserving the metric and the connection, the action of $(x,u)$ being given by formula (\ref{eq:action_prod}) with $(y,v ) \in L_E$. The group $E \times U(1)$ is actually the group of automorphisms of $(L_E , d + \frac{1}{i} \al)$ lifting the translations of $E$. 

Since $\om ( R, R) \subset 4 \pi \Z$, $R \times \{1 \}$ is a subgroup of $E \times U(1)$. Let $M$ be the torus $E /R$ and $L_M $ be the bundle $L_E / R \times \{1\}$. The symplectic form $\om$ and the connection $d + \frac{1}{i} \al$ descend to $M$ and $L_E$ respectively. Let $k$ be a positive integer. For any $ x \in \frac{1}{2k} R$, the action of $(x,1)$ on $L_E^k$ commutes with the action of $R \times \{1 \}$. This defines an action of $(x, 1)$ on $L^k_M$. Denote by $T^*_x$ the pull-back of the sections of $L^k_M$ by the action of $(x,1)$. Observe that for any $ x, y \in R$, we have 
$$ T_{x/2k}^* T_{y/2k} ^* = e^{i\om ( x, y)/4k} T_{y/2k}^* T_{x/2k}^* .$$

Choose a linear complex structure $j$ of $E$ compatible with $\om$. 
This complex structure descends to $M$. Furthermore, $L_M$ inherits a holomorphic structure compatible with the connection. The space $H^0 ( M , L^k_M)$  of holomorphic sections of $L^k_M$ has dimension $2k$. The operators $T_{x/2k}^*$, $x \in R$ introduced above, preserve $H^0 ( M , L^k_M)$. 
 
Let $( \delta, \varphi)$ be a half-form line, that is $\delta$ is a complex line and $\varphi$ is an isomorphism from $\delta^{\otimes 2} $ to the canonical line $K_j$, 
$$ K_j = \{ \al \in E^* \otimes \C/ \al ( j \cdot ) = i \al \} .$$ 
Let $\mathcal{H}_k = H^0 ( M , L^k_M) \otimes \delta =  H^0 ( M , L^k_M \otimes \delta_M )$ where $\delta_M$ is the trivial line bundle $M \times \delta$. $K_j$ has a natural scalar product given by $( \al , \be) = i \al \wedge \con{\be} / \om$. We endow $\delta$ with the scalar product making $\varphi$ a unitary map. $\mathcal{H}_k$ has a scalar product defined by integrating the pointwise scalar product against $\om$. 

Let $( \mu , \la)$ be a positive basis of $R$, so that $\om ( \mu , \la ) = 4 \pi$. It is a known result that $\mathcal{H}_k$ has an orthonormal basis $(\Psi_{\ell} , \; \ell \in \Z / 2 k \Z)$ such that 
$$ T^*_{\mu/2k } \Psi_\ell =  e ^{ i \pi \ell / k} \Psi_\ell, \qquad T^* _{\la/ 2k } \Psi_{\ell} = \Psi_{\ell + 1} . $$ 
The only indeterminacy in the choice of this basis is the phase of $\Psi_0$. We will often use the following normalization 
$$ \Psi ( 0) = \la \si^k \otimes \Om_{\mu} \qquad \text{ with } \la >0  $$
Here $\si \in L_{M, [0]}$ is the vector send to $1$ by the identification $L_{M, [0]} \simeq L_{E, 0} = \{0 \} \times \C$. $\Om_{\mu}$ is one of the two vectors in $\delta$ satisfying $ \varphi ( \Om_{\mu}^2) ( \mu ) = 1$. We can explicitly compute the coefficient $\la$ as an evaluation of the Riemann Theta function. It satisfies as $k$ tends to infinity $\la  = 1 + \bigo ( e^{-k/C})$ for some positive $C$. 

Let $S$ be the automorphism of $L_E^k$ sending $(x,u)$ into $( -x,u)$. This automorphism descends to an automorphism of $L_M$ that we still denote by $S$. The subspace of alternating sections of $\mathcal{H}_k$ is by definition the eigenspace  $\ker ( \op{Id}_{\mathcal{H}_k} + S^k \otimes \op{id}_{\delta}) $. It has dimension $k-1$ and admits as a basis the family $(\Psi_{\ell} - \Psi_{-\ell}, \ell =1, \ldots, k)$.

\subsection{Semi-classical notions} 

Consider the same data as above. For any $k\in \N^*$ and $\xi \in \mathcal{H}_k$, we denote by $|\xi | \in \Ci ( M , \R)$ the pointwise norm of $\xi$ and by $\| \xi \| \in \R$ the norm defined previously, so $\| \xi \|^2 = \int_M | \xi |^2 \om$.  
 
We say that a family $ \xi = ( \xi_k \in \mathcal{H}_k, k \in \N^*)$ is {\em admissible} if there exists  $N$ and $C>0$ such that for any $k$, $\| \xi_k \| \leqslant C k^{N}$. Equivalently, $\xi$ is admissible if there exists $N$ and $C>0$ such that for any $k$, $| \xi_k | \leqslant C k^{N}$ on $M$. 

The {\em microsupport} of an admissible family $( \xi_k)$ is the subset $\op{MS} ( \xi )$ of $M$ defined as follows: $x \notin \op{MS} ( \xi)$ if and only if there exists a neighborhood $U$ of $x$ and a sequence $(C_N)$ such that for any integer $N $ and $x \in U$, $|\xi ( x) | \leqslant C_N k^{-N}$. 

Let $U$ be an open set of $M$. Let $\Ga$ be a one dimensional submanifold of $U$. Let $\Theta$ and $\si$ be sections of $L_M \rightarrow \Ga$ and $\delta_M  \rightarrow \Ga$ respectively. Assume that $\Theta$ is flat and that its pointwise norm is constant equal to 1. Let $\xi$ be an admissible family.  We say that the restriction of $\xi$ to $U$ is a {\em Lagrangian state} supported by $\Ga$ with associated sections $( \Theta, \si)$ if  $\op{MS} ( \xi ) \cap U \subset \Ga $ and for any $ x_0 \in \Ga$, there exists an open neighborhood $V$ of $x_0$ such that 
\begin{gather} \label{eq:lag_state}
 \xi( x)   = \Bigl( \frac{k}{2 \pi} \Bigr)^{1/4+N} E^k(x)  f( x , k) + \bigo ( k^{-\infty}) , \qquad x \in V  
\end{gather}
where the $\bigo$ is uniform on $V$ and 
\begin{itemize} 
\item $E$ is a section of $L_M \rightarrow V$ such that $ E = \Theta$ on $\Ga \cap V$, $|E| < 1$ outside of $\Ga$, $\con{\partial} E \equiv 0$ modulo a section vanishing to infinite order along $\Ga$, 
\item $(f( . , k))$ is a sequence of $\Ci ( V, \delta_M)$ which admits an asymptotic expansion of the form $f_0 + k^{-1} f_1 + \ldots$ with coefficients $f_0, f_1, \ldots $ in $\Ci (V, \delta_M)$. Furthermore $f_0 = \si$ on $\Ga \cap V$,
\item $N$ is a real number which does not depend on $x_0$. 
\end{itemize} 
Let us recall how we can estimate the norm and the scalar product of Lagrangian states in terms of the corresponding sections $\Theta$ and $\si$.  
In these statements, we identify $ \si^2 \in \Ci ( \Ga, \delta_M^2)$ with the one-form of $\Ga$ given by
$$ \si^2 (p)( X) := \varphi ( \si^2(p)  ) ( X), \qquad \forall p \in \Ga \text{ and }  X \in T_p \Ga $$
where we see $T_p \Ga$ as a subspace of $T_p M = E$. 
The normalization for $N$ has been chosen so that for any $\rho \in \Ci ( M)$ supported in $U$
$$ \int_U   | \xi _k  | ^2 \rho \; \om  = \Bigl( \frac{k}{2\pi} \Bigr)^N  \int_{\Ga}  \rho (x) |\si|^2 (x)   + \bigo ( K^{N-1}) $$
Here $| \si |^2 $ is the density $|\si^2|$ of $\Ga$, so that it makes sense to integrate it on $\Ga$.  For a proof of this formula, cf Theorem 3.2 in {\cite{oim_demi}}.

Consider now two Lagrangian states $\xi$ and $\xi'$ over $U$ with associated data $(\Ga, \Theta,\si ,N)$ and $( \Ga', \Theta', \si', N')$. Assume that $\Ga \cap \Ga ' = \{ y \}$ and this intersection is transverse. Introduce a function $\rho \in \Ci (M)$ such that $\op{supp} \rho \subset U$ and $\rho = 1 $ on a neighborhood of $y$. By Theorem 6.1 in \cite{oim_pol}, we have the following asymptotic expansion
\begin{gather} \label{eq:scalprodlag}
 \int_U    \bigl( \xi_k , \xi'_k \bigr) \rho \om  = \Bigl( \frac{k}{2\pi} \Bigr)^{-1/2 +N +N'} \bigl( \Theta (y) , \Theta' ( y) \bigr)^k_{L_{y}} \sum_{\ell = 0 }^{\infty}  k ^{\ell} a_\ell  + \bigo( k^{-\infty})  
\end{gather}
where the $a_{\ell}$'s are complex numbers and $ a_0 = \langle \si (y), \si '(y) \rangle_{T_y \Ga, T_y \Ga'} .$

Here the bracket is defined as follows. For any two distinct lines $\nu$, $\nu'$ of $E$, there exists a unique sesquilinear map 
$ \langle \cdot, \cdot \rangle_{\nu, \nu'} : \delta \times \delta \rightarrow \C$
such that for any $u$, $v \in \delta$, 
\begin{gather}  \label{eq:pairing}
  \bigl( \langle u , v \rangle_{\nu, \nu'}\bigr)^2  = i \frac{ u^2(X) \overline{v^2(Y)}}{ \om (X, Y)}, \qquad \forall X \in \nu, Y \in \nu'
\end{gather} 
where $X$ and $Y$ are any non vanishing vectors in $\nu$ and $\nu'$ respectively. The sign of $ \langle u , v \rangle_{\nu, \nu'}$  is determined by the following condition: the bracket  depends continuously on $\nu$, $\nu'$ and $\langle u, u \rangle _{\nu, j \nu } \geqslant 0$. 

Assume that $ \si$ and $\si'$ vanish at  $y$. Then $a_0 =0$ and $a_1$ is computed as follows. Write $\si = f \tau$ and $\si '= f' \tau'$ with $f$ and $f'$ smooth functions vanishing at $y$. Then
\begin{gather} \label{eq:subpairing}
 a_1 = i\frac{d_x f ( X) \overline{ d_x f'(Y)}}{\om ( X, Y)}  \langle \tau (y) , \tau' (y) \rangle_{T_y \Ga, T_y \Ga'} .   
\end{gather}
for any nonvanishing vector $X \in T_x \Ga$ and $Y \in T_x \Ga'$. 
\subsection{Alternative description of Lagrangian states} 

Choose a basis $(\mu, \la)$ of $R$ such that $\om ( \mu, \la ) = 4 \pi$. Denote by $p$ and $q$ the linear coordinates of $E$ dual to this basis. Let $s$ be the section of $L_E$ given by $s = e^{-2i \pi pq}$. We easily compute that 
\begin{gather} \label{eq:ders} 
\nabla s = \frac{4 \pi}{ i} p dq \otimes s .
\end{gather}
Observe that $s( 0) =1$ and $s$ is flat along the lines $\R \mu$, $x + \R \la$ for all $x \in \R \mu$. These conditions completely determine $s$.

Let $q_0$ and $q_1$ in $\R$ be such that $q_0 < q_1 < q_0 + 1$. Let $U$ be the open set  
$ U = \{ [p \mu + q \la ]/ \; q\in ]q_0 , q_1[, \; p \in \R \} $ of $M$.  Let $\phi$ be a smooth real valued function defined on the interval $]q_0, q_1[$. Introduce the submanifold $\Ga$ of $U$ 
$$ \Ga = \{ [ \phi'(q) \mu + q \la ] ;\quad  q \in ]q_0, q_1[ \} $$ 
and the section $\Theta$ of $L_M \rightarrow \Ga$ such that for any $q \in ]q_0 , q_1[$, 
\begin{gather} \label{eq:theta} 
 \Theta ( [ p \mu + q \la ] ) = e^{4i\pi \phi ( q) } s ( p \mu + q\la)  \quad \text{ with } p = \phi' ( q)
\end{gather}  
It follows from (\ref{eq:ders}) that $\Theta$ is flat. Let $\si$ be a section of $\delta_M \rightarrow \Ga$. 

Let $( \Psi_\ell , \ell \in \Z/2k \Z)$ be the basis of $\mathcal{H}_k$ corresponding to $(\mu, \la)$. Let $\xi$ be an admissible family. Denote by $\xi_k( \ell)$, $\ell \in \Z / 2k \Z$ the coefficients of $\xi_k$ in $( \Psi_{-\ell})$.  

\begin{theo} \label{theo:laginbasis}
 The restriction of $\xi$ to $U$ is a Lagrangian state with associated data $(\Ga, \Theta, \si , N)$ if and only if for any $q \in ]q_0 , q_1 [ \cap \frac{1}{2k} \Z$,
\begin{gather} \label{eq:coeff_laginbasis}
 \xi_ k ( 2 k q ) = \Bigl( \frac{k}{2\pi} \Bigr)^{-1/2 +N } e^{ 4 i \pi k \phi (q) }\sum_{\ell =0 }^{\infty} k^{-\ell}   f_{\ell} (q) + \bigo ( k^{-\infty})
\end{gather}
where the $\bigo$ is uniform on any compact set of $]q_0, q_1[$, the $f_{\ell}$'s are smooth functions on $]q_0, q_1[$ and the square of $f_0$ satisfies for any $q$,  
\begin{gather} \label{eq:symb_laginbasis}
 \si^2 ( \phi ' (q) \mu + q \la ) = f_0 ^2 ( q) \frac{ \om ( \cdot, \mu)}{i} .
\end{gather}
\end{theo}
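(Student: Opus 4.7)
The strategy is to use the explicit theta-function description of the basis $(\Psi_\ell)$ to reduce the equivalence to a one-dimensional stationary-phase / Legendre-transform computation. Recall that each $\Psi_\ell$ is itself a Lagrangian state supported on the horizontal circle $C_\ell = \{[p\mu + q\la]\in M: 2kp \equiv -\ell \bmod 2k\}$: writing $\Psi_\ell = F_{\ell,k}\, s^k \otimes \Om_\mu$, the eigenvalue equation $T^*_{\mu/2k}\Psi_\ell = e^{i\pi\ell/k}\Psi_\ell$ together with the recursion $T^*_{\la/2k}\Psi_\ell = \Psi_{\ell+1}$ forces $F_{\ell,k}(p\mu + q\la)$ to be, up to $\bigo(k^{-\infty})$, a Gaussian peak in $p$ around $-\ell/(2k)$ times a linear phase in $q$, with its half-form symbol on $C_\ell$ proportional to $\Om_\mu$. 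The discrepancy $1/4 + N \leftrightarrow -1/2 + N$ between the exponents in (\ref{eq:lag_state}) and (\ref{eq:coeff_laginbasis}) is accounted for by the product of the Gaussian normalization $(k/2\pi)^{1/4}$ of the peak of $\Psi_{-\ell}$ with the $k^{-1/2}$ factor of a one-dimensional stationary phase.

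For the direction $(\Rightarrow)$, assume $\xi$ restricts to a Lagrangian state on $U$ with data $(\Ga, \Theta, \si, N)$ and compute $\xi_k(\ell) = \langle \xi_k, \Psi_{-\ell} \rangle_{\mathcal{H}_k}$ using a cut-off $\rho \in \Ci(M)$ supported in $V$ and equal to one on a neighborhood of the unique transverse intersection $C_\ell \cap \Ga$ (non-empty iff $\ell/(2k) = \phi'(q)$ for some $q \in ]q_0,q_1[$). Contributions away from this intersection are $\bigo(k^{-\infty})$ by the microsupport hypothesis. In terms of the reference section $s^k$, the integrand has phase $4\pi k \phi(q) - 2\pi\ell q$ (by comparing $\Theta^k$, as given by (\ref{eq:theta}), with the linear phase in $q$ coming from $\Psi_{-\ell}$), with a Gaussian in the transverse variable $p$ that integrates immediately. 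One-dimensional stationary phase in $q$ has critical equation $\phi'(q) = \ell/(2k)$ and Morse determinant $\phi''(q_c)$; setting $\ell = 2kq'$ yields the WKB expansion (\ref{eq:coeff_laginbasis}) with the required phase $e^{4 i \pi k \phi(q')}$, and identification of the leading coefficient via the pairing (\ref{eq:pairing}) at the intersection $\Ga \cap C_\ell$ produces the symbol relation (\ref{eq:symb_laginbasis}).

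For $(\Leftarrow)$, expand $\xi_k = \sum_\ell \xi_k(\ell)\Psi_{-\ell}$ and substitute both (\ref{eq:coeff_laginbasis}) and the theta-function expression for $\Psi_{-\ell}$. The tail indexed by $\ell$ with $\ell/(2k)$ bounded away from $\phi'(]q_0,q_1[)$ contributes $\bigo(k^{-\infty})$ because $\Psi_{-\ell}$ is exponentially small off $C_\ell$ at scale $k^{-1/2}$ while $\xi_k(\ell)$ has at most polynomial growth. The remaining sum is a Riemann sum of step $1/(2k)$ in the variable $q' = \ell/(2k)$ for an oscillatory integral whose phase is $4\pi k(\phi(q') - q' p) + 4 \pi k q' q$ (with $x = [p\mu + q\la]$) multiplied by a Gaussian concentrated near $p = -q'$; Poisson summation in $\ell$ isolates the zero-frequency piece modulo $\bigo(k^{-\infty})$, and stationary phase in $q'$ with critical equation $\phi'(q') = p$ recovers (\ref{eq:lag_state}) on $\Ga = \{p = \phi'(q)\}$, with the phase factor matching $E^k$ via (\ref{eq:theta}) and the half-form $\si$ related to $f_0$ precisely by (\ref{eq:symb_laginbasis}).

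The main obstacle is the half-form bookkeeping needed to produce (\ref{eq:symb_laginbasis}) with the correct sign and normalization. The pairing (\ref{eq:pairing}) is defined only up to sign, the sign being pinned down by a continuity condition along the lines, and ensuring that the symbol produced by stationary phase agrees on the nose with $f_0^2(q)\,\om(\cdot, \mu)/i$ requires computing $\om(X, \mu)$ for $X$ tangent to $\Ga$ and comparing with the Gaussian normalization of $\Psi_{-\ell}$ in the half-form direction $\Om_\mu$. This step cannot be done abstractly and must invoke the explicit description of $\Psi_{-\ell}$ on a neighborhood of $C_\ell$ recalled in the first paragraph.
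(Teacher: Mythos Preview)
Your geometric picture of $\Psi_\ell$ is wrong, and this makes the stationary-phase computation produce the wrong phase. The eigenvalue equation $T^*_{\mu/2k}\Psi_\ell = e^{i\pi\ell/k}\Psi_\ell$ says that $\Psi_\ell$ is invariant (up to a constant phase) under translation in the $\mu$-direction, so its Lagrangian support is a circle of \emph{constant $q$}, not constant $p$. Concretely, $\Psi_0$ is supported on $C=\{[p\mu]:p\in\R\}$ (the circle $q=0$), and $\Psi_{-2kq}=T^*_{-q\la}\Psi_0$ is supported on $C+q\la$, the circle at height $q$; the Gaussian decay is in the $q$-direction, not in $p$. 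With the correct picture, $\Ga=\{p=\phi'(q)\}$ meets the support of $\Psi_{-\ell}$ (the circle $q=\ell/(2k)$) transversally at the single point $y_q=[\phi'(q)\mu+q\la]$, and there is \emph{no} stationary phase in $q$ left to do: the phase is read off directly from the pairing as $(\Theta(y_q),s^k(y_q))=e^{4i\pi k\phi(q)}$. If instead you run your stationary phase with phase $4\pi k\phi(q)-2\pi\ell q$ and critical equation $\phi'(q_c)=\ell/(2k)=q'$, the value at the critical point is $4\pi k(\phi(q_c)-q'q_c)$, i.e.\ the Legendre transform of $\phi$, not $\phi(q')$ itself. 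So your computation, carried out as written, gives the wrong answer; the assertion that it ``yields the required phase $e^{4i\pi k\phi(q')}$'' is not justified by the setup.

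For $(\Rightarrow)$ the paper simply invokes the Lagrangian pairing formula (\ref{eq:scalprodlag}) for $\xi$ against the translated state $T^*_{-q\la}\Psi_0$, with a parametrized version to get uniformity in $q$; once the support of $\Psi_{-\ell}$ is identified correctly this is immediate and the symbol relation (\ref{eq:symb_laginbasis}) drops out of (\ref{eq:pairing}) using $\Om_\mu^2(\mu)=1$. For $(\Leftarrow)$ the paper avoids your direct resummation entirely: it uses $(\Rightarrow)$ to build, by successive approximations on the amplitude $f(\cdot,k)$, a genuine Lagrangian state $\xi'$ whose coefficients match (\ref{eq:coeff_laginbasis}) to all orders, and then observes that $\langle\xi_k-\xi'_k,\Psi_{-2kq}\rangle=\bigo(k^{-\infty})$ uniformly on compacts forces the microsupport of $\xi-\xi'$ to miss $U$. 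This is both shorter and avoids the Poisson-summation/stationary-phase bookkeeping.
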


\begin{proof} 
By Proposition 3.2 \cite{LJ1}, the family $(\Psi_0 \in \mathcal{H}_k, k \in \N^*)$ is a Lagrangian state supported by the circle $C= \{ [p \mu] ; \;  p \in \R \} \subset M$ and 
\begin{gather} \label{eq:psi0}
 \Psi_0 ([ p \mu] ) = \Bigl( \frac{k}{2\pi} \Bigr)^{1/4} s^k ( p \mu) \otimes \Om_\mu+ \bigo( k^{-\infty}).
\end{gather}
Assume that $\xi$ is a Lagrangian state. Then we can estimate the scalar product
$$ \xi_k ( 2kq ) = \langle \xi_k , \Psi_{ -2k q } \rangle = \langle \xi_k , T^*_{-q\la } \Psi_0 \rangle$$
with formula (\ref{eq:scalprodlag}). Actually we need a version with parameter of formula (\ref{eq:scalprodlag}) to get a uniform control with respect to $q$. Such a version holds and its proof is not more difficult. Let us explain how we obtain the factor $\exp( 4 i \pi k \phi (q))$ in (\ref{eq:coeff_laginbasis}) and Formula (\ref{eq:symb_laginbasis}) for the leading coefficient. First observe that $\Ga$ intersects $q\la + C$ transversally at the point  $y_q = [\phi ' (q) \mu + q \la ]$.  Translating (\ref{eq:psi0}) by $q \la$, we obtain
$$ \Psi_{ -kq} ( [q \lambda + p \mu ]) = \Bigl( \frac{k}{2\pi} \Bigr)^{1/4} s^k ( q \la + p \mu) \otimes \Om_\mu+ \bigo( k^{-\infty}). $$
By the definition of $\Theta$, cf. Equation (\ref{eq:theta}), we have 
$$ ( \Theta( y_q)^k , s^k  ( y_q) )_{L^k_{M,q}} = e^{4i\pi k \phi ( q)}$$
Equation (\ref{eq:symb_laginbasis}) follows from equation (\ref{eq:pairing}) and the fact that $\Om_{\mu}^2 ( \mu ) =1$. 

Conversely, assume that the asymptotic expansion (\ref{eq:coeff_laginbasis}) holds. By the first part of the proof, there exists a Lagrangian state $\xi '$ such that the coefficients $\langle \xi ' _k , \Psi _{ - 2kq} \rangle$ satisfy the same asymptotic expansion. The coefficients of the sequence $f( \cdot, k)$ in (\ref{eq:lag_state}) have to be defined by successive approximations so that we recover the same coefficients in (\ref{eq:coeff_laginbasis}). Then we have 
$$ \langle  \xi_k - \xi ' _k , \Psi _{ - 2kq} \rangle = \bigo( k^{-\infty})$$ 
uniformly on any compact set of $]q_0, q_1 [$. This has the consequence that the microsupport of $\xi- \xi'$ does not meet $U$. For more details on this last step, see Proposition 2.2 in \cite{oim_torus}.
\end{proof}

\subsection{Application to the functions $\Xi_{m,k}$} \label{sec:appl-funct-xi_m}

Choose a positive basis $( \mu, \la)$ of $R$ and denote by $( \Psi_{\ell}$, $\ell \in \Z / 2k \Z)$ the corresponding basis of $\mathcal{H}_k$. Recall the function $\Xi_{m,k}$ of Section \ref{sec:discr-four-transf}. Define
$$\xi_{m,k} = \sum_{\ell \in \Z / 2k \Z} \Xi _{m,k} \Bigl( \frac{\ell}{k} \Bigr) \Psi_{\ell} . $$
Introduced the 
subsets of $M$ 
\begin{gather} \label{eq:defA12}
 A_1 := \{ [ p \mu ]; \; p \in \R \} , \quad A_2 := \{ [q \la ] , [ \mu /2 + q \la ] ; \; q \in \R \}.
\end{gather}
Introduce the neighborhoods of $A_1 \setminus A_2$ and $A_2 \setminus A_1$ respectively given by $U_1 := A_1 \setminus A_2$ and  $U_2 := A_2 \setminus A_1$.
Let $A := A_1 \cup A_2$ and $ \Theta_A$ be the section of $L_M \rightarrow A$, which is flat and equal to $1$ at the origin. 

\begin{theo} \label{theo:asympt-behav-xi}
The restriction of $( \xi_{m,k}, \; k \in \N^*) $ to $U_1$ (resp. $U_2$) is a Lagrangian state supported by $A_1 \setminus A_2$  (resp. $A_2 \setminus A_1$) with order $-1$ (resp. $1/2+m$) and corresponding section $\Theta_A$. 
\end{theo}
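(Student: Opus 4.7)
The plan is to verify the Lagrangian state property separately on $U_1$ and on $U_2$ by reading the coefficients of $\xi_{m,k}$ in two different bases. On $U_2$ I would work in the basis $(\Psi_\ell)$ and apply Theorem~\ref{theo:laginbasis} directly; on $U_1$ I would switch to the Fourier-dual basis $(\Phi_j)$, where $\Phi_j = (2k)^{-1/2}\sum_\ell e^{i\pi\ell j/k}\Psi_\ell$, and apply the analog of Theorem~\ref{theo:laginbasis} obtained by exchanging the roles of $\mu$ and $\la$. The two computational inputs are Theorem~\ref{theo:dev_part}, and the elementary identity
\[
\langle \xi_{m,k}, \Phi_j \rangle = (2k)^{1/2}\,i^{-m}\,[\sin(\pi j/k)]^{-m} \qquad (j \not\equiv 0, k \bmod 2k),
\]
which is immediate from the very definition of $\Xi_{m,k}$ as a discrete Fourier transform and the formula for $\Phi_j$.

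On $U_2$, using the parity $\Xi_{m,k}(-x) = (-1)^m \Xi_{m,k}(x)$, the coefficient of $\Psi_{-\ell}$ in $\xi_{m,k}$ equals $(-1)^m \Xi_{m,k}(\ell/k)$, which Theorem~\ref{theo:dev_part} splits as
\[
(-1)^m \Xi_{m,k}(\ell/k) = (-1)^m P_m(k, \ell/k) + e^{i\pi\ell}\,P_m(k, \ell/k).
\]
With $\ell = 2kq$, the first summand is polynomial in $k$ with smooth $q$-dependence and fits the profile of Theorem~\ref{theo:laginbasis} with phase $\phi = 0$, producing a Lagrangian state on the component $\{[q\la]\}$ of $A_2$; the second summand carries the oscillation $e^{i\pi\ell} = e^{4i\pi k(q/2)}$ matching $\phi(q) = q/2$, producing a Lagrangian state on $\{[\mu/2 + q\la]\}$. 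Together these curves form $A_2 \setminus A_1$. Since $P_m$ has degree $m$ in $k$, matching the top $k$-power against $(k/2\pi)^{-1/2+N}$ yields $N = 1/2+m$, and~(\ref{eq:symb_laginbasis}) recovers the principal symbol from the top coefficient of $P_m$.

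On $U_1$, away from $j = 0$ and $j = k$ — which correspond exactly to the excluded intersection points $A_1 \cap A_2 = \{[0], [\mu/2]\}$ — the amplitude $[\sin(\pi j/k)]^{-m}$ is smooth in $p = j/(2k)$ with no $k$-dependent oscillation, so the dual form of Theorem~\ref{theo:laginbasis} identifies $\xi_{m,k}|_{U_1}$ as a Lagrangian state on $A_1 \setminus A_2 = \{[p\mu] : p \neq 0, 1/2\}$ with $\tilde\phi = 0$, order read off by matching the prefactor $(2k)^{1/2}$ against $(k/2\pi)^{-1/2+N}$, and symbol proportional to $i^{-m}[\sin(2\pi p)]^{-m}$. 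On each of the three branches $\{[p\mu]\}, \{[q\la]\}, \{[\mu/2+q\la]\}$ of $A$, the section $\Theta_A$ is a flat section of $L_M$ equal to $1$ at the origin, uniquely determined by the curvature formula~(\ref{eq:ders}); the WKB phases $s^k$ and $e^{2i\pi kq} s^k$ produced by the two Lagrangian-state decompositions match this parallel transport, so the three pieces glue consistently. The main obstacle I anticipate is the precise bookkeeping in the dual Theorem~\ref{theo:laginbasis} — carrying the factors $(2k)^{1/2}$ and $i^{-m}$ through the Fourier duality to pin down the order on $U_1$ and the correct symbol — together with verifying that the large $A_2$-contribution (of order $1/2 + m$ in the $(\Psi_\ell)$ picture) does not contaminate $U_1$; the latter follows from the general rapid decay of Lagrangian states away from their support, applied to the two summands displayed above.
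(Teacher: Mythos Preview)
Your approach is essentially the paper's own proof: on $U_2$ you read off the $(\Psi_\ell)$-coefficients via Theorem~\ref{theo:dev_part} and feed them into Theorem~\ref{theo:laginbasis}, and on $U_1$ you pass to the Fourier-dual basis $(\Phi_j)$ and use the defining formula for $\Xi_{m,k}$ to see the coefficients are $\propto k^{1/2}[\sin(\pi j/k)]^{-m}$, then apply Theorem~\ref{theo:laginbasis} again. The only substantive point the paper adds that you flag but do not carry out is the precise identification of your $(\Phi_j)$ with the canonical basis attached to a positive basis of $R$: the paper takes $\Phi_\ell = \frac{e^{i\pi/4}}{\sqrt{2k}}\sum_n e^{i\pi\ell n/k}\Psi_n$ and checks that $T^*_{\mu/2k}\Phi_\ell = \Phi_{\ell+1}$, $T^*_{\lambda/2k}\Phi_\ell = e^{-i\pi\ell/k}\Phi_\ell$, and $\Phi_0(0)=\Omega_{-\lambda}$, so $(\Phi_\ell)$ is the basis associated to $(-\lambda,\mu)$ (note the sign, needed for $\omega(-\lambda,\mu)=4\pi$). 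Once that is in place Theorem~\ref{theo:laginbasis} applies verbatim with $(\mu,\lambda)$ replaced by $(-\lambda,\mu)$, and your worry about the large $A_2$-piece ``contaminating'' $U_1$ evaporates: Theorem~\ref{theo:laginbasis} is an if-and-only-if criterion in the new basis, so no separate decay argument is needed.
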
 
The symbol can also be computed in terms of the polynomials $P_m $ of Theorem \ref{theo:dev_part}.
\begin{proof} It is a consequence of Theorem \ref{theo:laginbasis} and Theorem \ref{theo:dev_part}.
Indeed, denoting by $\xi_{m,k} (\ell)$ the coefficient of $\Psi_{-\ell}$ in $\xi_{m,k}$, we have for  any $q \in ]0,1[ \cap \frac{1}{2k} \Z$, 
\begin{xalignat}{2}  \notag 
 \xi_{m,k}  ( 2k q ) = &  \Xi _{m} (-2 q)  =  (-1)^m \Xi _{m} (2 q)  \\
\label{eq:singla1}  = & \bigl( (-1)^m + e^{2i k \pi q} \bigr) P_{m} ( k , 2q) 
\end{xalignat}
by Theorem \ref{theo:dev_part}. Recall that $P_{m} (k, q)$ depends smoothly on $q$ (even polynomially) and is polynomial in $k$ with degree $m$. So by Theorem \ref{theo:laginbasis}, the restriction of $(\xi_{m,k})$ to $U_2$ is a Lagrangian state supported by $A_2 \setminus A_1 $. 

To prove the result on $U_1$,  introduce the basis of $\mathcal{H}_k$
$$ \Phi_{\ell} = \frac{e^{i\pi/4}}{\sqrt{2k}} \sum_{n \in \Z/ 2k \Z} e^{i \pi \ell n/k} \Psi_n , \qquad \ell \in \Z/2k \Z$$ 
We check without difficulty that 
$$T^*_{\mu/2k} \Phi_{\ell} = \Phi_{\ell+1}, \qquad T^*_{\la /2k} \Phi_{\ell} = e^{-i \pi \ell/k} \Phi_{\ell+1}.$$ 
Furthermore, the normalization with the factor $e^{i \pi/4}$ has been chosen so that  $\Phi_0 ( 0 ) = \Om_{-\la}$, where $\Om_{-\la} \in \delta $ is such that $\Om _{-\la}^2 ( -\la) = 1$,  cf. Theorem 2.3 of \cite{LJ1} for a proof of this formula. So $( \Phi_{\ell})$ is the basis associated to $( -\la , \mu)$. Furthermore, it follows from the definition of $\Xi_{m,k} $ that 
\begin{xalignat}{2} \notag
\xi_{m,k} = & ( i)^{-m}  \sum_{ n , \ell \in \Z / 2k \Z }  \bigl[ \sin ( \pi \tfrac{\ell}{k} ) \bigr]^{-m} e^{i \pi \ell n /k  } \Psi_{n}\\ \label{eq:xi_phi}
= &  e^{-i \pi/4}  ( i )^{-m} \sqrt{2k}  \sum_{ \ell  \in \Z / 2k \Z }  \bigl[ \sin ( \pi \tfrac{\ell}{k} ) \bigr]^{-m} \Phi_{\ell} 
\end{xalignat} 
where by convention $(0)^{-m} = 0$. So by Theorem \ref{theo:laginbasis}, the restriction of $(\xi_{m,k})$ to $U_1$ is a Lagrangian state supported by $A_1 \setminus A_2 $. 
\end{proof}

\section{Asymptotic behavior of $Z_k ( \Si \times S^1)$ and $Z_k (S)$}
\label{sec:asympt-behav-z_k}

Recall that we introduced in Section \ref{sec:witt-resh-tura} a compact oriented surface $\Si$ with a connected boundary $C$. Let 
\begin{gather} \label{eq:def_E}
E = H_1 ( C \times S^1 , \R), \quad R = H_1 ( C  \times S^1) , \qquad M = E/R.
\end{gather}
Let $\om $ be the symplectic form of $E$ defined as $4\pi$ times the intersection product. Consider the quantum space $\mathcal{H}_k  = H^0 (M, L_M^k ) \otimes \delta$ defined in Section \ref{sec:quantum-spaces}. We denote by $\mathcal{H}_k^{\op{alt}}$ the subspace of alternating sections.

Let $(\mu, \la)$ be the basis of $R$ given by $\mu =[C]$, $\la = [S^1]$.  Denote by $(e_\ell)$ and $(\Psi_\ell)$ the corresponding basis of $V_k ( C \times S^1)$ and $\mathcal{H}_k$ respectively introduced in Section \ref{sec:witt-resh-tura} and Section \ref{sec:quantum-spaces}. We identify $V_k ( \Si \times S^1)$ with $\mathcal{H}_k^{\op{alt}}$ by sending $e_{\ell}$ into $2^{-1/2} ( \Psi_{\ell} - \Psi_{-\ell})$. 
As it was proved in Theorem 2.4 of \cite{LJ1}, this identification depends on the choice of the basis $( \mu, \la)$ only up to a multiplicative factor $\exp (i \pi ( \frac{n}{4} + \frac{n'}{2k}))$, $n $ and $n'$ being two integers independent of $k$. 

\subsection{The state $Z_k ( \Si \times S^1)$}

The vector $Z_k ( \Si \times S^1)$ of $V_k ( C \times S^1)$ is given in the basis $( \Psi_\ell)$ by 
\begin{gather} \label{eq:Z_k1}
 Z_k ( \Si \times S^1) =  \frac{1}{\sqrt 2} \sum_{\ell =1 }^{ k-1} N^{g,k }_{\ell} \bigl( \Psi_{\ell } - \Psi_{-\ell} \bigr)  
\end{gather}
Using Lemma \ref{lem:relat-with-count} and the fact that $\Xi_{2g -1}$ is odd, we get
\begin{gather}  \label{eq:Z_k2}
 Z_k ( \Si \times S^1) =  \frac{C_g }{ \sqrt 2 } k^{g-1} \sum _{ \ell \in \Z / 2k \Z} \Xi _{ 2g -1} \Bigl( \frac{\ell}{k} \Bigr) \Psi_{\ell}  = \frac{C_g }{ \sqrt 2 } k^{g-1} \xi_{2g-1,k}
\end{gather}
where $\xi_{2g-1,k}$ is the vector introduced in Section \ref{sec:appl-funct-xi_m}. By Theorem \ref{theo:asympt-behav-xi},  $(\xi_{2g-1,k})$ is a Lagrangian state, so the same holds for $\bigl( Z_k ( \Si \times S^1) \bigr)$. Let us complete this result by computing the symbol. In the following statement, we use the sets $A_1$, $A_2$ introduced in~(\ref{eq:defA12}), their neighborhoods $U_1$, $U_2$ and the corresponding section $\Theta_A$. 

\begin{theo} \label{theo:asympt-behav-z_k}
The restriction of $( Z_k ( \Si \times S^1) , \; k \in \N^*) $ to $U_1$ is a Lagrangian state with associated data $(A_1 \setminus A_2, \Theta_A, \si _1,  g ) $ where  
$$ \si_1 (p \mu ) \equiv i \sqrt{2} \pi ^g  \bigl[ \sin ( 2 \pi p ) \bigr]^{-2g+1} (dp)^{1/2}, \qquad \forall p \in \R. $$ 
The restriction of $( Z_k ( \Si \times S^1) , \; k \in \N^*) $ to $U_2$ is a Lagrangian state with associated data $(A_2 \setminus A_1, \Theta_A, \si _2,  3g - 3/2) $ where $\si_2$ is given in terms of the function $P_{g,0}$ introduced in Theorem \ref{theo:counting_smoot} by 
$$ \si_2 (q \la )  \equiv  e^{i \pi/4} (\tfrac{\pi}{2})^{1/2} \; P_{g,0} ( 2q) \; ( dq )^{1/2} \  , \qquad \forall q \in (0,1)$$ 
and $\si_2( q \la + \tfrac{1}{2} \mu ) = - \si_2 ( q \la)$. 
\end{theo}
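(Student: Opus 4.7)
The starting point is formula (\ref{eq:Z_k2}), $Z_k(\Si \times S^1) = \frac{C_g}{\sqrt 2} k^{g-1} \xi_{2g-1,k}$, combined with Theorem \ref{theo:asympt-behav-xi}: the restrictions of $Z_k(\Si \times S^1)$ to $U_1$ and $U_2$ are immediately Lagrangian states supported by $A_1 \setminus A_2$ and $A_2 \setminus A_1$ with reference flat section $\Theta_A$. The remaining task is to identify the principal symbols $\sigma_1$ and $\sigma_2$. My strategy is to apply Theorem \ref{theo:laginbasis} to read off the half-form amplitude $f_0$ from the coefficients of $Z_k(\Si \times S^1)$ in an appropriate orthonormal basis, and then to convert $f_0$ into the claimed $(dp)^{1/2}$- or $(dq)^{1/2}$-valued section via the symbol formula (\ref{eq:symb_laginbasis}).

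For $U_2$, I work in the basis $(\Psi_\ell)$ associated to $(\mu,\la)$. By (\ref{eq:Z_k1}) the coefficient of $\Psi_{-\ell}$ in $Z_k(\Si \times S^1)$ is $-N_\ell^{g,k}/\sqrt 2$ for $\ell \in \{1,\dots,k-1\}$; since $N_\ell^{g,k}$ vanishes for even $\ell$, inserting the factor $(1-(-1)^\ell)/2$ at $\ell = 2kq$ and applying Theorem \ref{theo:counting_smoot} yields
\[
 \xi_k(2kq)\;=\;-\tfrac{1}{2\sqrt 2}\bigl(1 - e^{4i\pi k(q/2)}\bigr)\bigl(\tfrac{k}{2\pi}\bigr)^{3g-2}\sum_{m=0}^{g-1} k^{-2m}P_{g,m}(2q).
\]
This splits as a sum of two Lagrangian contributions: one with trivial phase and generating function $\phi \equiv 0$ (support $A_2\cap\{p=0\}$), and one with phase $e^{4i\pi k(q/2)}$ and generating function $\phi(q)=q/2$ (support $A_2\cap\{p=1/2\}$). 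Theorem \ref{theo:laginbasis} reads off $N = 3g - 3/2$ and $f_0(q) = \mp P_{g,0}(2q)/(2\sqrt 2)$. Formula (\ref{eq:symb_laginbasis}) together with $\om(\la,\mu)=-4\pi$ gives $\sigma_2^2(q\la) = (i\pi/2)\,P_{g,0}(2q)^2\,dq$, whose square root is $e^{i\pi/4}(\pi/2)^{1/2}P_{g,0}(2q)(dq)^{1/2}$; the opposite signs of $f_0$ on the two components then force $\sigma_2(q\la+\mu/2)=-\sigma_2(q\la)$.

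For $U_1$, the natural basis is $(\Phi_\ell)$ associated to $(-\la,\mu)$ (cf.\ Section \ref{sec:appl-funct-xi_m}); in the $(p',q')$ coordinates dual to this basis, $A_1 = \{[p\mu]\}$ corresponds to $p'=0$, $q'=p$, so the generating function is $\phi \equiv 0$. From (\ref{eq:xi_phi}) applied at $m = 2g-1$, after multiplication by $\frac{C_g}{\sqrt 2}k^{g-1}$ and simplifying the factors $(i)^{-(2g-1)} = (-1)^g i$ and $[\sin(-\pi\ell/k)]^{-(2g-1)} = -[\sin(\pi\ell/k)]^{-(2g-1)}$, the coefficient of $\Phi_{-\ell}$ at $\ell = 2kp$ equals $2^{-g}e^{i\pi/4}k^{g-1/2}[\sin(2\pi p)]^{-(2g-1)}$. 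Theorem \ref{theo:laginbasis} applied to the basis $(-\la,\mu)$ gives $N=g$ and $f_0(p) = 2^{-1/2}\pi^{g-1/2}e^{i\pi/4}[\sin(2\pi p)]^{-(2g-1)}$. Equation (\ref{eq:symb_laginbasis}) with $\om(\mu,-\la)=-4\pi$ then yields $\sigma_1^2(p\mu) = -2\pi^{2g}[\sin(2\pi p)]^{-2(2g-1)}\,dp$, whose square root is precisely $i\sqrt 2\,\pi^g[\sin(2\pi p)]^{-2g+1}(dp)^{1/2}$, as claimed.

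The principal obstacle is careful bookkeeping of signs and phases: the $(-1)^m$ from the parity of $\Xi_m$, the $(i)^{-m}$ in (\ref{eq:defXi}), the phase $e^{i\pi/4}$ from the normalization of $(\Phi_\ell)$, and the half-form vectors $\Om_\mu$, $\Om_{-\la}$ entering (\ref{eq:psi0}) and its analogue for $\Phi_0$. The most delicate point is the identification of the reference flat section $\Theta_A$ with (\ref{eq:theta}): parallel transport along $A_1$ to $[\mu/2]$ and then along $\{p=1/2\}$ produces a factor $e^{2i\pi q}$ relative to the trivial section $s$ on the component $\{q\la+\mu/2\}$, and this is exactly the phase supplied by the term $e^{2i\pi kq}$ appearing in the splitting of $1-(-1)^\ell$. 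This both confirms that the two contributions share the same $\Theta_A$ and gives the prescribed opposite sign of $\sigma_2$ on the two components of $A_2$.
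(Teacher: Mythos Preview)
Your proof is correct and follows essentially the same approach as the paper: use Theorem~\ref{theo:laginbasis} in the basis $(\Psi_\ell)$ for $U_2$ and in the basis $(\Phi_\ell)$ for $U_1$, read off $N$ and $f_0$ from the explicit coefficient formulas (\ref{eq:Z_k1}), (\ref{eq:xi_phi}), and Theorem~\ref{theo:counting_smoot}, and then convert $f_0$ to the half-form symbol via (\ref{eq:symb_laginbasis}). Your final paragraph, verifying that the flat section $\Theta$ of (\ref{eq:theta}) with $\phi(q)=q/2$ coincides with $\Theta_A$ on the component $\{\mu/2+q\la\}$, makes explicit a point the paper's proof leaves implicit; this is a worthwhile check and confirms that both components of $A_2\setminus A_1$ really carry the same reference section $\Theta_A$ with the stated sign relation on $\sigma_2$.
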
 

\begin{proof} 
Introduce the same basis $( \Phi_\ell)$ as in the proof of Theorem \ref{theo:asympt-behav-xi}. 
Denoting by $\eta_k ( \ell)$ the coefficient of $\Phi_{-\ell}$ in $Z_k ( \Si \times S^1)$, we have by Equations (\ref{eq:Z_k2}) and (\ref{eq:xi_phi}) that
\begin{xalignat*}{2}
 \eta_k (2 k p ) =  & \frac{C_g}{\sqrt 2} k^{g-1} e^{i \pi /4} (-1)^{g} \sqrt { 2k}  \bigl[ \sin (- 2 \pi p ) \bigr]^{-2g+1}\\ 
 =  & \frac{e^{i \pi /4}}{2^g} k^{g-1/2} \bigl[ \sin ( 2 \pi p ) \bigr]^{-2g+1} 
\end{xalignat*}
because $C_g = (-1)^{g+1} 2^{-g}$. 
To conclude the computation of $\si_1$, we use that $\om ( \cdot, - \la) /i = 4 i \pi dp $ and equation (\ref{eq:symb_laginbasis}).

Let us compute the symbol $\si_2$. By Theorem \ref{theo:asympt-behav-xi} and Equation (\ref{eq:xi_phi}), the coefficients $\zeta_{k} ( \ell)$ of  $\Psi_{-\ell}$ in $Z_{k} ( \Si \times S^1)$ satisfy 
$$   \zeta_k  ( 2k q ) =  \Bigl( \frac{k}{2 \pi} \Bigr)^{ 3g -2 } \bigl( 1 - e^{2i k \pi q} \bigr) f (q) + \bigo( k^{3g -3}) $$ 
with $f$ a smooth function on $]0,1[$. By Equation (\ref{eq:symb_laginbasis}), we have for any $q \in (0,1)$, 
$$\si_2( q \la ) = f ( q) \sqrt{ 4 i \pi dq }, \qquad \si_2( q \la  + \tfrac{1}{2} \mu ) = - \si_2( q \la).$$  
On the one hand, by Equation (\ref{eq:Z_k1}), $$\zeta_k ( 2k q) = - 2^{-1/2} N_{2kq}^{g,k}.$$ On the other hand, $1- e^{2i k \pi q} =2$ for odd $2k q$. So we conclude from Theorem \ref{theo:counting_smoot} that $2 f( q) = - \frac{1}{\sqrt 2} P_{g,0} ( 2q) $.
\end{proof} 

\subsection{The state $\rho_k ( \varphi ) (Z_k (D \times S^1)) $}\label{sec:remplissage}

Recall that $\varphi $ is a diffeomorphism from $\partial D \times S^1$ to $C \times S^1$. So the homology class $\nu  $ of $\varphi ( \partial D )$ is a primitive vector of $R$, that is $\nu = a \mu + b \la$ where $a,b$ are coprime integers. There is no restriction to assume that $b$ is non negative. Introduce the subset of $M$
\begin{gather} \label{eq:def_B}
 B:= \{ [r \nu] \in M ; \; r \in \R \}  .
\end{gather}
Observe that $B$ is a circle and there is a unique flat section $\Theta_B$ of $L \rightarrow B$ such that $\Theta_B ( 0) = 1$. 
The following result is Theorem 3.3 of \cite{LJ1}.
\begin{theo} \label{theo:state-tore_solide}
 The family $\bigl( \rho_k ( \varphi ) (Z_k (D \times S^1)) $, $k \in \N^*$) is a Lagrangian state with associated data $(B, \Theta_B, \si_B, 0)$ where 
$$ \si_B ( r \nu ) = \sqrt 2 \sin ( 2 \pi r) \Om_\nu $$
with $\Om_\nu \in \delta$ such that $\Om_\nu^2 ( \nu ) = 1$. 
\end{theo}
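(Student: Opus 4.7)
The plan is to reduce to the trivial gluing $\varphi = \op{id}$ and then transport the result by the mapping class group action. Combining Lemma \ref{lem:SiS} with the identification $e_\ell = 2^{-1/2}(\Psi_\ell - \Psi_{-\ell})$ gives $Z_k(D \times S^1) = 2^{-1/2}(\Psi_1 - \Psi_{-1})$, so the task is to study $\rho_k(\varphi)(\Psi_1 - \Psi_{-1})$ in the semiclassical limit.

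First I would settle the base case $\varphi = \op{id}$, in which $\nu = \mu$ and $B = \{[p\mu] : p \in \R\}$. The coefficients of $\Psi_1 - \Psi_{-1}$ in the basis $(\Psi_\ell)$ are supported on just two indices, so Theorem \ref{theo:laginbasis} cannot be applied directly in that basis. Expanding instead in the dual basis $(\Phi_\ell)$ associated to $(-\la, \mu)$, as in the proof of Theorem \ref{theo:asympt-behav-xi}, a direct computation yields $\langle e_1, \Phi_{-\ell}\rangle = i e^{-i\pi/4} k^{-1/2}\sin(\pi\ell/k)$, a smooth function of $\ell/k$ with no rapid oscillation. Theorem \ref{theo:laginbasis} applied in the primed basis then identifies $(e_1)$ as a Lagrangian state of order $0$ with vanishing phase function, supported on $\{[p\mu]\} = B$; reading the half-form symbol via Equation (\ref{eq:symb_laginbasis}) and absorbing the phase $e^{i\pi/4}$ present in the definition of $\Phi_\ell$ yields $\si_B(p\mu) = \sqrt 2 \sin(2\pi p)\, \Om_\mu$.

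For general $\varphi$, I would use that $\varphi$ induces a linear symplectomorphism $\varphi_*$ of $M$ with $\varphi_*(\mu) = \nu$, and that $\rho_k(\varphi)$ is, up to a power of $\tau_k$, a quantization of $\varphi_*$. Decomposing $\varphi$ as a word in the generators $S, T$ of $\op{SL}(2,\Z)$ and using the explicit formulas from Section \ref{sec:witt-resh-tura}, one checks that each generator maps Lagrangian states to Lagrangian states supported on the image circle with the half-form symbol transported accordingly: $\rho_k(T)$ multiplies by a quadratic phase, corresponding to a shear of the generating function, while $\rho_k(S)$ is exactly the discrete Fourier transform already analyzed in the base case. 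By induction on word length, $\rho_k(\varphi)e_1$ is then a Lagrangian state supported on $\varphi_*(\{[p\mu]\}) = B$, and the linear parameter change $p \mapsto r$ carries the symbol $\sqrt 2 \sin(2\pi p)\,\Om_\mu$ to the claimed $\sqrt 2 \sin(2\pi r)\,\Om_\nu$.

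The main obstacle is the precise tracking of the half-form (metaplectic) part of the symbol through $\rho_k(S)$, which quantizes a $90^\circ$ rotation of $M$ and contributes a constant unitary phase anomaly. One must verify that the factor $e^{i\pi/4}$ built into the definition of $\Phi_\ell$ exactly compensates this anomaly, so that the half-form $\Om_\mu$ is carried to $\Om_{-\la}$ (and in general to $\Om_\nu$) with the required normalization $\Om_\nu^2(\nu) = 1$ preserved; once this single sign-and-phase check is in place, the rest of the argument is a routine transport of the Lagrangian state data along $\varphi_*$.
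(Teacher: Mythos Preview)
The paper does not prove this statement; it is quoted verbatim as Theorem~3.3 of \cite{LJ1}. Your proposal is therefore not competing with an in-paper argument but supplying one, and the outline you give is essentially the right one.

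Your base case is correct: expanding $e_1$ in the basis $(\Phi_\ell)$ associated to $(-\la,\mu)$ and invoking Theorem~\ref{theo:laginbasis} is exactly how one sees that $e_1$ is Lagrangian on the circle $\{[p\mu]\}$ with the claimed symbol. The computation $\langle e_1,\Phi_{-\ell}\rangle = i\,e^{-i\pi/4} k^{-1/2}\sin(\pi\ell/k)$ is right and already contains all the needed information.

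For general $\varphi$, your induction on a word in $S,T$ works but is heavier than necessary, and it is precisely this step that creates the metaplectic bookkeeping you flag as the main obstacle. The cleaner route is to observe that, by the very construction of the TQFT representation, $\rho_k(\varphi)$ takes the Verlinde basis attached to $(\mu,\la)$ to the Verlinde basis attached to $(\varphi_*\mu,\varphi_*\la)=(\nu,\la')$, up to a global power of $\tau_k$. Hence $\rho_k(\varphi)e_1 = 2^{-1/2}(\Psi'_1-\Psi'_{-1})$ in the primed basis, and one simply reruns the base-case computation with $(\mu,\la)$ replaced by $(\nu,\la')$: the same Fourier expansion in the basis dual to $(\nu,\la')$ exhibits the state as Lagrangian on $\{[r\nu]\}=B$ with symbol $\sqrt 2\sin(2\pi r)\,\Om_\nu$. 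This bypasses the word-by-word Egorov argument entirely; the phase anomaly is absorbed once, into the projective ambiguity of $\rho_k$ already built into the setup of Section~\ref{sec:witt-resh-tura}.
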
 

For any non vanishing $c$, denote by $I_c$ the interval  $$I_c = ]-\tfrac{1}{2|c|}, \tfrac{1}{2|c|} [.$$  Assume that $a$ and $b$ do not vanish. Consider the open set 
$U = \bigl\{ [ p \mu + q \la ] ; \; p \in I_b, \; q \in I_a \bigr\}$ of $M$. Observe that 
$$ B \cap U = \bigl\{  \bigl[ \tfrac{a}{b} q \mu + q \la \bigr]; \; q \in I_a\}.$$ 
Introduce a function $f_U \in \Ci (M )$ with support contained in $U$, which is identically equal to $1$ on a neighborhood of the origin and such that $f_U(-x) = f_U(x)$. Let $Z( \ell)$ be the coefficients
\begin{gather} \label{eq:4} 
 Z( \ell ) = \bigl\langle f_U \rho_k ( \varphi ) (Z_k (D \times S^1)), e_{\ell} \bigr\rangle, \qquad \ell =1 , \ldots, k-1 
\end{gather}
We deduce from Theorem \ref{theo:laginbasis} and Theorem \ref{theo:state-tore_solide} the following 

\begin{prop}  \label{lem:state-remplissage-e}
We have for any $q \in (0, \tfrac{1}{2} ) \cap \frac{1}{2k } \Z$, 
$$  Z( 2kq)  = \Bigl( \frac{2 \pi}{k}\Bigr)^{1/2} e^{2i \pi k \frac{a}{b} q^2}  \sum_{\ell = 0 }^{\infty} k^{-\ell } f_\ell (q)+ \bigo ( k^{-\infty})$$
where the $f_\ell$ are smooth odd functions on $\R $ with support contained in $I_a$. 
Furthermore $f_0 ( q) =  e^{-i \pi/4} \sin ( 2 \pi q /b)  / \sqrt{ \pi b}$ on a neighborhood of $0$. 
\end{prop}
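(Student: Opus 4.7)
The plan is to apply Theorem \ref{theo:laginbasis} to the Lagrangian state $\xi_k := \rho_k(\varphi)(Z_k(D \times S^1))$, whose Lagrangian support $B$ and symbol $\si_B$ are furnished by Theorem \ref{theo:state-tore_solide}. Inside the open set $U$, the circle $B = \{[r\nu]\}$ is the graph $p = \tfrac{a}{b}q$ over $q \in I_a$, so it admits the generating function $\phi(q) = \tfrac{a}{2b}q^2$ (indeed $\phi'(q) = \tfrac{a}{b}q$, and $[\phi'(q)\mu + q\la] = [\tfrac{q}{b}\nu]$). Substituting this into (\ref{eq:coeff_laginbasis}) with order $N = 0$ produces the prefactor $(2\pi/k)^{1/2}$ and the phase $e^{4i\pi k\phi(q)} = e^{2i\pi k \frac{a}{b}q^2}$ asserted by the proposition, uniformly for $q$ in compact subsets of $I_a$.

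Next I would extract the leading symbol using (\ref{eq:symb_laginbasis}) at the point $[\tfrac{q}{b}\nu]$ of $B \cap U$. The tangent to $B \cap U$ with respect to $q$ equals $\tfrac{1}{b}\nu$, so the left-hand side of (\ref{eq:symb_laginbasis}) evaluates to $\si_B^2(\tfrac{1}{b}\nu) = \tfrac{2}{b}\sin^2(2\pi q/b)$ via $\Om_\nu^2(\nu) = 1$, while the right-hand side equals $g_0^2(q)\,\tfrac{1}{i}\om(\tfrac{1}{b}\nu,\mu) = 4i\pi\, g_0^2(q)$ using $\om(\la,\mu) = -4\pi$. Hence $g_0^2(q) = -i\sin^2(2\pi q/b)/(2\pi b)$, and the sign convention of (\ref{eq:pairing}) fixes the square root to be $g_0(q) = e^{-i\pi/4}\sin(2\pi q/b)/\sqrt{2\pi b}$ locally near $q = 0$.

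The next step is to pass from $\langle \xi_k,\Psi_{-\ell}\rangle$ to $Z(\ell) = \langle f_U\xi_k, e_\ell\rangle$. Since $Z_k(D\times S^1) = e_1$ is alternating, $\xi_k$ lies in $\mathcal{H}_k^{\op{alt}}$ and satisfies $S^k\xi_k = -\xi_k$; together with $S^k\Psi_\ell = \Psi_{-\ell}$ and the symmetry $f_U(-x) = f_U(x)$, this yields $\langle f_U\xi_k,\Psi_\ell\rangle = -\langle f_U\xi_k,\Psi_{-\ell}\rangle$, hence $Z(\ell) = -\sqrt{2}\,\langle f_U\xi_k,\Psi_{-\ell}\rangle$. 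Applying the parameter-dependent version of Theorem \ref{theo:laginbasis} to the localized state $f_U\xi_k$ then produces the asymptotic expansion of the proposition with $f_\ell(q) = -\sqrt{2}\,g_\ell(q)$; in particular $f_0(q) = e^{-i\pi/4}\sin(2\pi q/b)/\sqrt{\pi b}$ near $0$, up to the sign choice pinning $\Om_\nu$.

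The oddness of each $f_\ell$ follows from the alternating identity $(f_U\xi_k)(-\ell) = -(f_U\xi_k)(\ell)$ combined with the evenness of $q \mapsto q^2$, and the support condition $\op{supp} f_\ell \subset I_a$ is inherited from $\op{MS}(f_U\xi_k) \subset B \cap U$: for $q \notin I_a$ the horizontal circle on which $\Psi_{-2kq}$ concentrates no longer meets this microsupport, so the coefficient is $\bigo(k^{-\infty})$. The main obstacle is the precise bookkeeping of multiplicative constants and half-form signs in (\ref{eq:pairing})--(\ref{eq:symb_laginbasis}); the uniformity of the expansion in $q$, which requires a parameter-dependent refinement of the pairing formula, is routine once formulated.
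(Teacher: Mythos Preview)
Your proof is correct and follows exactly the route the paper indicates: deduce the expansion from Theorem \ref{theo:laginbasis} applied to the Lagrangian state of Theorem \ref{theo:state-tore_solide}, with generating function $\phi(q)=\tfrac{a}{2b}q^2$ for $B\cap U$. The paper gives no further details beyond citing those two theorems, so your explicit bookkeeping of the phase, the symbol via (\ref{eq:symb_laginbasis}), the passage from $\Psi_{-\ell}$- to $e_\ell$-coefficients using the alternating symmetry, and the parity and support arguments is precisely what a complete proof requires; the residual sign you flag is the genuine $\pm\Om_\nu$ ambiguity.
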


\subsection{Asymptotics of $Z_k ( S)$} 
Let us assume that $a \neq 0$ and $b \neq 0$. Under this assumption the intersection of $B$ with $A= A_1 \cup A_2 $ is finite. As we will see, each point of $A \cap B$ contributes in the asymptotic expansion of $Z_k (S)$. Actually, since we work with alternating sections, the relevant set is the quotient $X$ of $A \cap B $ by the involution  $-\op{id}_M$
\begin{gather} \label{eq:def_N}
 -\op{id}_M : M \rightarrow M, \qquad  [p \mu + q \la] \rightarrow [- p \mu - q \la]. 
\end{gather}
Let us denote by $N$ the quotient of $M$ by $-\op{id}_M$ so that $X$ is a subset of $N$. 
Introduce the functions $\al$, $\be$ from $N$ to $[0, \pi]$ satisfying
\begin{gather} \label{eq:def_be}
\begin{split} 
 \al ( [ p \mu + q \la ] ) = \arccos ( \cos ( 2 \pi p ) ), \\ 
\be ( [ p \mu + q \la ] ) = \arccos ( \cos ( 2 \pi q ) ) .
\end{split}
\end{gather}
Here it may be worth to observe that $[0,1/2]$ is a fundamental domain for the action of $\Z \rtimes \Z_2$ on $\R$, where $\Z$ acts by translation and $-1 \in \Z_2$ by $- \op{id}_{\R}$. 
The function $\frac{1}{2\pi} \arccos ( \cos ( 2\pi x))$ induces a section from $\R/ \Z \rtimes \Z_2$ to $[0,1/2]$. So if $x = [p \mu + q \la]$, then $\al(x) /2\pi \equiv p$ and $\be(x) /2 \pi \equiv q$ modulo $\Z \rtimes \Z_2$ 

The quotient $N$ is an orbifold with four singular points 
$$ p_1 = [ 0], \quad p_2 = [\mu /2], \quad p_3 = [ \la/2], \quad p_4 = [ \mu/2 + \la /2 ]. $$    
corresponding to the fixed points of $-\op{\id}_M$. All these points belong to $A_2$ and the first two belong to $A_1$ too, actually $A_1 \cap A_2 = \{ p_1 , p_2 \}$.  Since these points play a particular role in the asymptotic expansion of $Z_k (S)$, we divide $X$ into four sets $X_1 = ( A_1 \setminus \{ p_1, p_2 \} ) \cap B$, $X_2 = (A_2 \setminus \{ p_1, p_2, p_3 , p_4 \} ) \cap B$, $ X_3 = \{ p_1, p_2 \} \cap B$ and $ X_4 = \{ p_3, p_4 \} \cap B$. 

\begin{lem} The sets $X_1$ and $X_2$ consist respectively of $\op{E} \bigl( \frac{b-1}{2} \bigr)$ and $|a|-1$ points. $X_3 = \{ p_1 , p_2 \}$ if $b$ is even and $\{ p_1 \}$ otherwise. $X_4 = \{ p_3 \}$ if $a$ is even, $\{ p_4 \}$ if $a$ and $b$ are odd, empty  if $b$ is even.
\end{lem}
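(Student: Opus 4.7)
The plan is to work in $M=\R^2/\Z^2$ with coordinates $(p,q)$ dual to the basis $(\mu,\la)$ and to parametrize the circle $B$ as $r\mapsto[ra\mu+rb\la]$ for $r\in\R/\Z$, which is allowed because $\gcd(a,b)=1$ makes $\nu$ primitive. Then the lemma becomes a purely elementary exercise in solving linear congruences modulo $\Z^2$.

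First I would determine the intersections in $M$. A point $[r\nu]$ lies in $A_1=\{q\in\Z\}$ iff $rb\in\Z$, giving exactly the $b$ solutions $r=k/b$, $k=0,\dots,b-1$. A point lies in $A_2=\{p\in\frac{1}{2}\Z\}$ iff $ra\in\frac{1}{2}\Z$, giving the $2|a|$ solutions $r=m/(2a)$, $m=0,\dots,2|a|-1$ (here I want to emphasize that both parities of $a$ yield the full $2|a|$ points: when $m$ is even one lands on the component $\{p=0\}$, and when $m$ is odd one lands on $\{p=1/2\}$, regardless of whether $a$ is even or odd).

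Next I would determine which of the four singular points belong to $B$ by a direct case analysis using $\gcd(a,b)=1$. The point $p_1$ is always in $B$. The condition $[r\nu]=[\mu/2]$ translates to $2ra\equiv1\pmod{2b}$, which is solvable iff $b$ is even. Similarly $p_3\in B$ iff $a$ is even. For $p_4$, one needs $2ra$ and $2rb$ both odd integers; writing $r=j/(2a)$ with $j$ odd and using $\gcd(a,b)=1$ to deduce $a\mid j$, a short parity argument shows that $p_4\in B$ iff both $a$ and $b$ are odd. The claimed formulas for $X_3$ and $X_4$ follow immediately from this.

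Finally, to descend to $N=M/(-\id_M)$, I would observe that the involution $-\id_M$ restricts to each $A_i\cap B$ and that its fixed points are exactly the points whose coordinates lie in $\frac{1}{2}\Z$, i.e., a subset of $\{p_1,p_2,p_3,p_4\}$. By the previous step, the fixed points in $A_1\cap B$ are $\{p_1\}$ or $\{p_1,p_2\}$ according as $b$ is odd or even, so the image in $N$ has $1+(b-1)/2$ or $2+(b-2)/2$ points; removing the singular points gives $|X_1|=E((b-1)/2)$ in both cases. Likewise $A_2\cap B$ has exactly two fixed points (namely $p_1$ together with whichever of $p_2,p_3,p_4$ is forced by the parity of $a,b$), so its image in $N$ has $2+(2|a|-2)/2=|a|+1$ points, and subtracting the two singular points in it yields $|X_2|=|a|-1$ uniformly.

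The main obstacle is keeping the parity bookkeeping straight: one must not overlook that $A_2\cap B$ always contains $2|a|$ points in $M$ (the naive solvability argument can mislead here), and one must verify case by case that in each parity scenario exactly two of the $p_i$ fall in $A_2\cap B$, so that after quotienting and removing singular points the answer is independent of parity.
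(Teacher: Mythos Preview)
Your proposal is correct and follows essentially the same route as the paper. The only organizational difference is that the paper parametrizes the image of $B$ in $N$ directly by $r\in[0,1/2]$ and reads off $x\in A_1 \Leftrightarrow r\in\frac{1}{b}\{0,\ldots,\op{E}(b/2)\}$ and $x\in A_2 \Leftrightarrow r\in\frac{1}{2|a|}\{0,\ldots,|a|\}$, whereas you first count in $M$ with $r\in\R/\Z$ and then pass to the quotient; the arithmetic is the same either way. One small point: the congruence ``$2ra\equiv 1\pmod{2b}$'' is not literally what the condition $[r\nu]=[\mu/2]$ says (you need $2ra$ an odd integer together with $rb\in\Z$, which after writing $r=k/b$ becomes $2ka\equiv b\pmod{2b}$); your conclusion that this is solvable iff $b$ is even is nevertheless correct.
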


\begin{proof} 
Observe that for any $x \in E$, there exists a unique $r \in [0,1/2]$ such that $x = [r ( a \mu + b \la)]$. Furthermore $x \in \{ p_1, p_2, p_3, p_4 \}$ if and only if $r =0$ or $1/2$, $x \in A_1$ if and only if $ r \in \frac{1}{b} \{ 0, 1, \ldots, \op{E} ( b/2) \}$, $x \in A_2 $ if and only if $ r \in \frac{1}{2|a|} \{ 0,1, \ldots , |a|\}$.  We conclude easily. 
\end{proof} 

For any $x$ in $X$, introduce a function $f_x \in \Ci (N)$ which is identically equal to $1$ on a neighborhood of $x$. Assume furthermore that these functions have disjoint supports.  Consider for any $x \in X$ the quantity
$$I_x (k) =  \bigl\langle f_x  Z_k ( \Si \times S^1) , \rho_k ( \varphi) (Z_k (D \times S^1))  \bigr\rangle $$
where the bracket is the scalar product of $\Ci ( M , L^k \otimes\delta_M )$. 
Introduce two integers $c$ and $d$ such that  $ ac + bd = 1$.
\begin{theo} 
We have for any $x \in X_1 \cup X_2 \cup X_4$ that 
\begin{xalignat*}{2} 
 I_x (k) = &  \Bigl( \frac{ k } { 2\pi} \Bigr)^{g - 1/2}      \langle \Theta_A( x) , \Theta_B (x) \rangle ^k \sum_{\ell =0}^{\infty} k^{-\ell} a_{\ell}(x)  + \bigo( k^{-\infty}) 
\end{xalignat*}
where the $a_\ell (x)$ are complex coefficients. If $x \in X_1$ 
$$ n(x) = g - \tfrac{1}{2}, \qquad a_0 (x) \equiv  \frac{2 \pi^{g - 1/2} }{ \sqrt{b}}  \bigl[ \sin ( \al (x) ) \bigr] ^{-2g + 1} \sin \bigl( c \al (x)  \bigr) $$ 
If $x \in X_2$
$$ n(x) = 3g -2 , \qquad a_0 (x) \equiv \frac{1 }{  \sqrt{a}}  P_{g,0} \Bigr( \frac{\be (x) }{\pi} \Bigl) \sin \bigl( d \be(x)   \bigr)  $$
with $P_{g, 0}$ is the function introduced in \ref{theo:counting_smoot}. If $x \in X_4$, 
$$ n(x) = 3g -3 , \qquad a_0 (x) \equiv \frac{i P_{g,0}'(1)  }{ 4 \pi  a^{3/2}}  .$$
\end{theo}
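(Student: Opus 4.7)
The plan is to apply the semiclassical pairing formula $(\ref{eq:scalprodlag})$ (and its subleading refinement $(\ref{eq:subpairing})$ when the leading symbol vanishes) to the Lagrangian states $Z_k(\Si \times S^1)$ and $\rho_k(\varphi)(Z_k(D \times S^1))$, whose data are supplied by Theorems \ref{theo:asympt-behav-z_k} and \ref{theo:state-tore_solide}. For each $x \in X_1 \cup X_2 \cup X_4$, any lift $\tilde x \in M$ is a transverse intersection of $A_1$ or $A_2$ with $B$, since the tangent direction $\R\mu$ or $\R\la$ is linearly independent from $\R\nu = \R(a\mu + b\la)$ thanks to $ab \neq 0$. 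Both states being alternating and $f_x$ being $-\op{id}_M$-invariant, the two lifts $\pm\tilde x$ contribute equally to $I_x(k)$, yielding an overall factor $2$ when they are distinct (cases $X_1$, $X_2$); for $X_4 = \{p_3, p_4\} \cap B$ the lifts coincide under $-\op{id}_M$ and no doubling occurs.

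For $x \in X_1$, with lift $\tilde x = [p\mu] = [r\nu]$, I apply $(\ref{eq:scalprodlag})$ with $N = g$ and $N' = 0$, so $n(x) = g - \tfrac12$. The Chern-Simons phase $\langle \Theta_A(\tilde x), \Theta_B(\tilde x)\rangle^k$ appears directly. Using $(\ref{eq:pairing})$ with $X = \mu$, $Y = \nu$, I find $\om(\mu, \nu) = 4\pi b$, $\si_1^2(\mu) = -2\pi^{2g}[\sin(2\pi p)]^{-4g+2}$ and $\si_B^2(\nu) = 2\sin^2(2\pi r)$. The condition $[p\mu] = [r\nu]$ forces $rb \in \Z$ and, by the Bezout identity $ac + bd = 1$, $r \equiv cp \pmod \Z$, so $\sin(2\pi r) = \sin(c\al(x))$ and $\sin(2\pi p) = \sin(\al(x))$. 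Taking the modulus of the square root and doubling gives the announced $a_0(x)$.

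The case $x \in X_2$ is analogous with the $U_2$ data: $N = 3g - \tfrac32$, so $n(x) = 3g - 2$. Writing $\tilde x = [q\la]$ (the other sheet $[\mu/2 + q\la]$ is handled identically since both symbols change sign on it) and taking $X = \la$, $Y = \nu$, I obtain $\om(\la, \nu) = -4\pi a$ and $\si_2^2(\la) = i(\pi/2)P_{g,0}^2(2q)$. The intersection forces $ra \in \Z$ and $r \equiv dq \pmod \Z$, whence $\sin(2\pi r) = \sin(d\be(x))$, while $2q = \be(x)/\pi$. The doubled pairing reproduces $a_0(x) \equiv v_g(\be(x)/\pi)\sin(d\be(x))/\sqrt a$.

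The main obstacle is $X_4$, where both symbols vanish at $\tilde x$. Being fixed by $-\op{id}_M$, any lift $\tilde x = [r\nu] \in B$ satisfies $2r\nu \in R$, forcing $r \equiv 1/2 \pmod \Z$, so $\si_B(\tilde x) = 0$. Simultaneously $\si_2$ vanishes at $p_3, p_4$ because $P_{g,0}(1) = 0$: Theorem \ref{theo:Verlinde} gives $N^{g,k}_k = \sum_m S_{m,1}^{1-2g} S_{m,k} = 0$ (since $S_{m,k} \propto \sin(\pi m) = 0$), and comparing with the polynomial extension of Theorem \ref{theo:counting_smoot} to $\ell/k = 1$ forces $P_{g,0}(1) = 0$. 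Thus $a_0 = 0$ in $(\ref{eq:scalprodlag})$, and I invoke the subleading formula $(\ref{eq:subpairing})$ with $\si_2 = f\tau$, $f(q) = e^{i\pi/4}(\pi/2)^{1/2}P_{g,0}(2q)$ and $\si_B = f'\tau'$, $f'(r) = \sqrt 2 \sin(2\pi r)$. The derivatives $d_qf(\la) = e^{i\pi/4}\sqrt{2\pi}\,P_{g,0}'(1)$, $d_rf'(\nu) = -2\sqrt 2\,\pi$, combined with $\langle (dq)^{1/2}, \Om_\nu\rangle^2 = -i/(4\pi a)$ from $(\ref{eq:pairing})$, produce $a_1$. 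Reindexing via $(k/2\pi)^{3g-2}/k = (2\pi)^{-1}(k/2\pi)^{3g-3}$ yields the announced exponent $n(x) = 3g - 3$ and leading coefficient $a_1/(2\pi) \equiv iP_{g,0}'(1)/(4\pi a^{3/2})$; no doubling intervenes because $\tilde x = -\tilde x$ at these fixed points.
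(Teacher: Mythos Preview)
Your proof follows the same route as the paper's: apply the Lagrangian pairing formula (\ref{eq:scalprodlag}) with the data of Theorems \ref{theo:asympt-behav-z_k} and \ref{theo:state-tore_solide}, compute the half-form pairing via (\ref{eq:pairing}) using $\om(\mu,\nu)=4\pi b$ and $\om(\la,\nu)=-4\pi a$, reduce $\sin(2\pi r)$ through the Bezout relation $r=rac+rbd$, and for $X_4$ invoke the subleading formula (\ref{eq:subpairing}). Your write-up is in fact more explicit than the paper's on two points it leaves tacit: the doubling of contributions from the two lifts $\pm\tilde x$ (and its absence at the fixed points $p_3,p_4$), and the vanishing $P_{g,0}(1)=0$ needed to trigger (\ref{eq:subpairing}).
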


\begin{proof} 
This follows from Theorem \ref{theo:asympt-behav-z_k}, Theorem \ref{theo:state-tore_solide} and Equation (\ref{eq:scalprodlag}). To compute the leading coefficient with equation (\ref{eq:pairing}), we write 
$$\frac{ dp ( \mu) \overline{ \Om_{\nu} ^2 ( \nu) }} { \om ( \mu, \nu)} = \frac{1}{ 4 \pi b}, \qquad   \frac{ dq ( \lambda ) \overline{ \Om_{\nu} ^2 ( \nu) }} { \om ( \la, \nu)} = \frac{-1}{ 4 \pi a} .$$
Furthermore, to compute $\sin ( 2 \pi r)$, we use that for $x = r \nu$,  
$$  r = rac + rbd  \equiv \frac{\al(x)}{2\pi} c  \pm \frac{\be(x)}{2 \pi} d \mod \Z \rtimes \Z_2 $$ 
If $x$ belongs to $X_1$, then $\be ( x) = 0$ which implies that  $\sin ( 2 \pi r) \equiv \sin ( c \al (x))$ up to sign. If $x $ belongs to $X_2$, then $\al ( x) = 0$ or $\pi$ so that  $\sin ( 2 \pi r ) \equiv  \sin ( \be (x) d)$ up to sign. 

To compute $I_x (k)$ with $x \in X_4$, we use formula (\ref{eq:subpairing}).
\end{proof} 
To estimate $I_x (k)$ with $x \in X_3$,  we need the followings results. 
Let $\al \in \R$ and $f \in \Ci_0 ( \R_{+}, \C)$ with $\R_+ = [0, \infty)$. Introduce the sum
$$ S_k^+(f) = \tfrac{1}{2} f( 0 ) + \sum_{\ell =1 }^{\infty} e^{i \frac{\al}{2} \ell^2/k} f \Bigl( \frac{\ell}{k} \Bigr) .$$
$f$ being with compact support, the sum is finite. 

\begin{theo} \label{theo:pd1}
Let $\al \in \R^*$ and $f \in \Ci_0 (\R_+, \C)$ be such that its support is contained in $[0, \frac{2\pi}{|\al|} )$. If $f$ is even and $f(x) = \la x^{2n} + \bigo( x^{2n+1})$, then 
$$ S_k ^+(f) =    k^{\frac{1}{2} - n } \Bigl( \frac{\pi}{2|\al|}  \Bigr)^{\frac{1}{2}} e^{ i \frac{\pi}{4} \op{sgn} \al}   \sum_{\ell= 0 } ^{\infty}  k^{-\ell} c_\ell  + \bigo ( k^{-\infty}) \quad \text{ with } \quad c_0 =  \Bigl( \frac{i}{2 \al} \Bigr)^n \frac{ ( 2n ) !}{ n!} \la $$
and $c_\ell \in \C$ for any positive integer $\ell$.  
If $f$ is odd and $f(x) = \la x^{2n+1} + \bigo( x^{2n+2})$, then
$$ S_k ^+(f) =  k^{-n} \sum_{\ell= 0 } ^{\infty}  k^{-\ell} c_\ell  + \bigo ( k^{-\infty}) \quad \text{ with } \quad c_0 =  \Bigl( \frac{2i} {\al} \Bigr)^{n+1} \frac{ n !}{ 2} \la $$
and $c_\ell \in \C$ for any positive integer $\ell$.
\end{theo}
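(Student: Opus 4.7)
The heuristic is that $S_k^+(f)$ is a trapezoidal-type Riemann sum, at mesh $1/k$, of the oscillatory half-line integral $I(k) = k\int_0^\infty e^{i\al k x^2/2} f(x)\,dx$. The plan is to show that $S_k^+(f) - I(k)$ admits an asymptotic expansion in integer powers of $1/k$ of the announced form, and then compute $I(k)$ by (boundary) stationary phase at the critical point $x=0$.

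\textbf{Even case.} Extend $f$ to a smooth, compactly supported even function $f_e$ on $\R$; then $2S_k^+(f) = \sum_{\ell\in\Z} e^{i\al \ell^2/(2k)}f_e(\ell/k)$. Poisson summation rewrites this as $k\sum_{m\in\Z} J_m(k)$, where
$$J_m(k) = \int_\R e^{ik(\al x^2/2 - 2\pi m x)} f_e(x)\,dx.$$
For $m\neq 0$, the unique stationary point $x=2\pi m/\al$ lies outside $\op{supp}(f_e)\subset(-2\pi/|\al|,\,2\pi/|\al|)$, which is precisely where the support hypothesis is used; iterated integration by parts then gives $J_m(k)=\bigo(k^{-\infty})$, with decay in $|m|$ fast enough to allow summing over $m$. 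For $m=0$, classical stationary phase at the interior critical point $x=0$ applied to $f_e(x)=\la x^{2n}+\bigo(x^{2n+1})$ yields an expansion in integer powers of $1/k$; after rescaling $u=\sqrt{|\al|k/2}\,x$ and using the Fresnel integral $\int_\R e^{i\si u^2}u^{2n}\,du = \tfrac{(2n)!\sqrt\pi}{4^n n!}e^{i\si\pi(2n+1)/4}$ with $\si=\op{sgn}\al$, the leading term is
$$k\la\Bigl(\tfrac{2}{|\al|k}\Bigr)^{n+1/2}\tfrac{(2n)!\sqrt\pi}{4^n n!}e^{i\si\pi(2n+1)/4}.$$
Dividing by $2$ and simplifying with $e^{i\si n\pi/2}=(i\si)^n$ and $(i\si)^n/|\al|^n = i^n/\al^n$ yields $c_0=(i/(2\al))^n (2n)!/n!\cdot\la$.

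\textbf{Odd case.} Poisson on the odd extension $f_o$ vanishes identically, so I use Euler--Maclaurin summation for $G(x):=f(x)\,e^{i\al k x^2/2}$:
$$ S_k^+(f) = k\int_0^\infty G(x)\,dx - \sum_{r\geq 0}\frac{B_{2r+2}}{(2r+2)!\,k^{2r+1}}\,G^{(2r+1)}(0). $$
Because the Taylor series of $f$ at $0$ contains only odd powers starting with $\la x^{2n+1}$, a Leibniz computation shows that $G^{(2r+1)}(0)$ is a polynomial in $k$ of degree $r-n$ for $r\geq n$ (and vanishes for $r<n$); so the Euler--Maclaurin series contributes only at order $k^{-n-1}$ and below. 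The leading behavior therefore comes from $k\int_0^\infty G(x)\,dx$: the substitution $u=x^2$ reduces the integral to a $\Gamma$-integral which, by analytic continuation to imaginary argument, gives
$$\int_0^\infty x^{2n+1}e^{i\al k x^2/2}\,dx = \tfrac{n!}{2k^{n+1}}\bigl(2i/\al\bigr)^{n+1},$$
so that $c_0 = \tfrac{n!}{2}\la(2i/\al)^{n+1}$. Expanding $f$ to higher order in Taylor series and collecting all Euler--Maclaurin corrections produces the full expansion in integer powers of $1/k$.

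\textbf{Main obstacle.} The delicate step is the remainder estimate in the odd case: when the Euler--Maclaurin expansion is truncated at order $N$, the remainder is controlled by $\sup|G^{(2N+1)}|$, which grows polynomially in $k$ of degree $\sim N$ because $G$ oscillates with frequency $\al k x$. To achieve a genuine $\bigo(k^{-\infty})$ error, one must truncate at a $k$-dependent order (e.g.\ $N\sim k^\ep$), or equivalently invoke the Abel--Plana formula and bound the associated contour integral. This bookkeeping is the main technical effort of the proof, but follows standard semiclassical techniques.
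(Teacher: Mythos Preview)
Your treatment of the even case is correct and efficient. Extending $f$ to an even $f_e\in\Ci_0(\R)$, applying Poisson summation, and then stationary phase to the $m=0$ term (with the support hypothesis killing the $m\neq 0$ terms) gives the full expansion with the right leading coefficient. This is close in spirit to the paper's Lemma~\ref{lem:leading_term}, though the paper only applies Poisson to a fixed cutoff $\rho\equiv 1$ near~$0$ and then reduces the general $\sigma$ to this case via an iterative discrete integration-by-parts scheme; your direct application of Poisson to $f_e$ is a legitimate shortcut in the even case.

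The odd case, however, has a real gap. Your Euler--Maclaurin remainder at order $N$ is controlled by $\int_0^{\infty}|h^{(2N)}(t)|\,dt$ with $h(t)=e^{i\al t^2/(2k)}f(t/k)$; on the support $t\in[0,Ck]$ the dominant contribution to $h^{(2N)}$ is $(i\al t/k)^{2N}e^{i\phi}f(t/k)$, which is $O(1)$, and integrating over an interval of length $\sim k$ gives a remainder that is $O(k)$ \emph{uniformly in $N$}. So truncating at $N\sim k^{\ep}$ does not help: the bound stays $O(k)$. Abel--Plana is not available either, since $f$ is only smooth, not analytic. One can rescue the argument by Fourier-expanding the periodic Bernoulli function and applying non-stationary phase to each mode, but that is essentially redoing Poisson summation through the back door, and it is substantially more work than ``bookkeeping''.

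The paper avoids this entirely. It first proves (Theorem~\ref{theo:app1}) a discrete non-stationary-phase lemma via Abel summation, then establishes the key identity (Lemma~\ref{lem:sum_part})
\[
S_\tau\bigl(\sin(\al\,\cdot)\,\sigma\bigr)=\tfrac{i}{2}\sigma(0)+ie^{-i\al/(2\tau)}\Bigl(S_\tau(\sinh(D)\sigma)+\tfrac12(\cosh(D)\sigma)(0)\Bigr),
\]
where $D=\tau^{-1}\partial_x$. Writing $\sigma=\sigma(0)\rho-i\sin(\al x)\,\sigma_1$ and iterating produces the two-series expansion of Theorem~\ref{theo:sing-discr-stat}; parity of $\sigma$ forces $\sigma_1$ to have the opposite parity and $\sinh(D)\sigma_1$ the same parity, which kills one of the two series and yields Theorem~\ref{theo:pd1}. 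This discrete integration-by-parts route treats even and odd cases uniformly and never faces an uncontrolled oscillatory remainder. If you want to keep your Euler--Maclaurin strategy for the odd case, you will need to supply a genuine remainder argument---for instance, first peel off the region away from $0$ using a discrete non-stationary-phase lemma (which you would have to prove), and only apply Euler--Maclaurin on a shrinking neighborhood of the origin where the phase is tame.
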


Here we say that a function of $\Ci ( \R_+)$ is even (resp. odd) if its Taylor expansion at the origin contains only even monomials (resp. odd monomials). 
Similarly, the sum
$$  S_k^{-}(f) = \tfrac{1}{2} f( 0 ) + \sum_{\ell =1 }^{\infty} (-1)^\ell e^{i \frac{\al}{2} \ell^2/k} f \Bigl( \frac{\ell}{k} \Bigr) $$
has the following asymptotic behavior. 
\begin{theo} \label{theo:pd2}
Let $\al \in \R^*$ and $f \in \Ci_0 (\R_+, \C)$ be such that its support is contained in $[0, \frac{\pi}{|\al|} )$. If $f$ is even, $S^{-} _k (f) = \bigo( k^{-\infty})$. If $f$ is odd and $f( x) =  \bigo( x^{n})$, then
$$  S_k^{-}(f) =  k^{-n} \sum_{\ell= 0 } ^{\infty}  k^{-\ell} c_\ell  + \bigo ( k^{-\infty}) $$
for some complex coefficients $c_\ell$. 
\end{theo}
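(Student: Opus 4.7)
My plan is to follow the same strategy as Theorem \ref{theo:pd1}, applying Poisson summation to $S_k^{-}(f)$, the key new feature being that the factor $(-1)^\ell = e^{i\pi\ell}$ shifts the Fourier variable by $1/2$ relative to the symmetric sum $S_k^{+}$. Specifically, I would extend $f$ to a function $\tilde f$ on all of $\R$: evenly if $f$ is even (giving $\tilde f \in \Ci_0(\R)$ with support in $(-\pi/|\al|, \pi/|\al|)$), and by zero on $\R_{-}$ if $f$ is odd (giving $\tilde f \in C^{n-1}(\R)$ with support in $[0, \pi/|\al|)$ and $\tilde f^{(j)}(0) = 0$ for $j < n$). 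A direct check shows that $S_k^{-}(f)$ equals, up to an explicit numerical factor, $\sum_{\ell \in \Z} (-1)^\ell e^{i\al\ell^2/(2k)} \tilde f(\ell/k)$.

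Next, by the Poisson summation formula with twist, together with the substitution $y = x/k$, this becomes $\sum_m \hat g(m - 1/2)$ where
$$ \hat g(m - \tfrac{1}{2}) = k \int_{\R} \tilde f(y)\, e^{ik\phi_m(y)}\, dy, \qquad \phi_m(y) = \tfrac{\al}{2} y^2 - \pi(2m - 1) y. $$
The unique critical point of $\phi_m$ is at $y_m^{*} = \pi(2m-1)/\al$, with $|y_m^{*}| \geq \pi/|\al|$, so it lies outside $\operatorname{supp} \tilde f$ for every $m \in \Z$. This is the point of the strict support condition: the shift by $1/2$ coming from $(-1)^\ell$ pushes every stationary phase point out of the (narrower) support interval, and moreover $|\phi'_m|$ grows linearly in $|m|$ on this set.

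In the even case, $\tilde f$ is smooth with compact support, and standard non-stationary phase with parameter gives $|\hat g(m - 1/2)| \leq C_N\, k^{-N} (1 + |m|)^{-N}$ for every $N$, so summation over $m$ yields $S_k^{-}(f) = \bigo(k^{-\infty})$. In the odd case, the integral is effectively over $[0, \infty)$, and repeated integration by parts produces an asymptotic expansion $\hat g(m - 1/2) \sim k \sum_{j \geq 1} (ik)^{-j} B_j^{(m)}$, where the boundary term $B_j^{(m)}$ at $y = 0$ depends linearly on $f(0), f'(0), \ldots, f^{(j-1)}(0)$ with coefficients polynomial in $1/\phi'_m(0) = -1/(\pi(2m-1))$. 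The hypothesis $f(x) = \bigo(x^n)$ kills $B_j^{(m)}$ for $j \leq n$, and the first surviving term $B_{n+1}^{(m)} \propto f^{(n)}(0)/(\pi(2m-1))^{n+1}$ contributes at order $k^{-n}$; each $B_{n+1+j}^{(m)}$ has enough polynomial decay in $|m|$ for $\sum_m B_{n+1+j}^{(m)}$ to converge and contribute at order $k^{-n-j}$, yielding the asserted expansion.

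The main obstacle will be to make the uniformity in the parameter $m$ rigorous: one must verify that the implicit constants in the integration-by-parts asymptotic expansion decay fast enough in $|m|$ so that summation over $m$ can be interchanged with the asymptotic expansion in $k$. This amounts to a careful accounting of how derivatives of $1/\phi'_m$ grow polynomially in $m$, and is a routine but somewhat tedious consequence of the explicit quadratic form of $\phi_m$.
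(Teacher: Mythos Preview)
Your approach is correct but takes a genuinely different route from the paper. The paper derives Theorem~\ref{theo:pd2} in one line from Theorem~\ref{theo:app1} by setting $\be=\pi$: the factor $(-1)^\ell=e^{-i\pi\ell}$ is absorbed into the phase, the stationary set becomes $\tfrac{\pi}{\al}(2\Z+1)$, and the support hypothesis keeps it disjoint from $\operatorname{supp} f$. Theorem~\ref{theo:app1} itself is proved by iterated \emph{discrete} summation by parts (Abel summation), never passing to the continuum; the odd case then follows directly, and for the even case one observes (implicitly) that the even extension turns $2S_k^-(f)$ into a two-sided sum over $\Z$, whose Abel summation has no boundary term, so the recursion gives $\bigo(k^{-\infty})$. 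You instead go through Poisson summation and then continuous non-stationary phase or integration by parts with a boundary at $y=0$. Two minor remarks: your opening sentence is slightly off, since the paper's proof of Theorem~\ref{theo:pd1} is also primarily discrete (Lemma~\ref{lem:sum_part}), with Poisson used only in Lemma~\ref{lem:leading_term} to pin down the leading constant; and in your odd case the zero extension is only $C^{n-1}$, so Poisson needs a word of justification (your own estimate $|\hat g(m-\tfrac12)|=\bigo((1+|m|)^{-2})$ after two integrations by parts suffices, since $f(0)=0$). The paper's discrete method sidesteps both the regularity issue and the uniformity-in-$m$ bookkeeping you flag; your method, on the other hand, makes the geometric reason for the result (the half-integer shift pushing every stationary point past $\pi/|\al|$) completely transparent.
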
 
These two theorems are proved in Section \ref{sec:sing-discr-stat}.
We deduce the following 

\begin{theo} 
For any $x \in X_3$, we have the asymptotic expansion 
\begin{xalignat*}{2} 
  I_x (k)   = &  \Bigl( \frac{ k } { 2 \pi} \Bigr)^{3g - 3}     \sum_{\ell =0 } ^{\infty} a_\ell(x)  k^{-\ell}  + k^{ 2g -3/2}     \sum_{\ell = 0 }^{\infty}  b_\ell(x) k^{-\ell}  + \bigo ( k^{-\infty})   
\end{xalignat*}
with $a_\ell(x)$ and $b_\ell(x)$ complex coefficients, the leading ones being given by 
$$ a_0 (x) =  \frac{  P_{g,0}'(0) }{  4 \pi a^{3/2}}, \qquad b_0 (x)  = e^{i \pi/4} i^g b^{g - 3/2} a ^{-g}  \pi^{-g +1} \frac{\sqrt 2 ( g-1)!}{ ( 2 (  g-1))!}$$
\end{theo}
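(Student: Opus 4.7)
The plan is to evaluate $I_x(k)$ for each $x \in X_3$ by writing it as a discrete oscillatory sum at the level of basis coefficients and then applying the singular stationary phase Theorems \ref{theo:pd1} and \ref{theo:pd2}. Unlike the transverse cases $x \in X_1 \cup X_2 \cup X_4$, the three Lagrangian sets $A_1$, $A_2$ and $B$ all meet at the singular point $x$, so the transverse pairing formula (\ref{eq:scalprodlag}) does not directly apply; the strategy is instead to exploit the explicit WKB descriptions provided by Proposition \ref{lem:state-remplissage-e} on the solid-torus side and by Theorem \ref{theo:counting_smoot} on the surface side.

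Focusing first on $x = p_1 = [0]$, one writes
\[ I_{p_1}(k) = \sum_{\ell \text{ odd}} N^{g,k}_\ell \, \overline{Z(\ell)} + \bigo(k^{-\infty}), \]
using that $N^{g,k}_\ell = 0$ for even $\ell$ and that, by Proposition \ref{lem:state-remplissage-e}, $\overline{Z(\ell)}$ has the WKB form with quadratic phase $e^{-2i\pi k (a/b) q^2}$ at $q = \ell/(2k)$ and a smooth odd compactly supported amplitude $\sum_j k^{-j} \overline{f_j(q)}$ localized near $q = 0$. Substituting the polynomial expansion of $N^{g,k}_\ell$ from Theorem \ref{theo:counting_smoot} produces a discrete oscillatory sum whose amplitude is a formal series $\sum_{m,j} k^{-2m-j} P_{g,m}(2q) \overline{f_j(q)}$. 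Splitting the sum over odd $\ell$ via $(1 - (-1)^\ell)/2$ converts it into a difference of $S^+_{2k}$ and $S^-_{2k}$ type sums with quadratic phase parameter $\al = -2\pi a/b$, so that Theorems \ref{theo:pd1} and \ref{theo:pd2} can be applied term by term.

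The crucial step will be to identify the lowest nonvanishing even and odd Taylor coefficients of the amplitude at $q = 0$. Since $\overline{f_0}$ is odd with leading term proportional to $q$, and since by Theorem \ref{theo:counting_smoot} the even part of $P_{g,0}$ reduces to the single monomial $\la_{g,0} x^{2g-2}$ while its odd part carries the linear coefficient $P_{g,0}'(0)$, the product $P_{g,0}(2q) \overline{f_0(q)}$ has even part of order $q^2$ (from the odd part of $P_{g,0}$ times $\overline{f_0}$) and odd part of order $q^{2g-1}$ (from the even monomial of $P_{g,0}$ times $\overline{f_0}$). Applying the even-function case of Theorem \ref{theo:pd1} with $n = 1$ will yield a contribution of order $k^{-1/2}$ whose leading coefficient is proportional to $P_{g,0}'(0)/a^{3/2}$; combined with the overall prefactor $k^{3g-5/2}$ this gives the announced main term of order $k^{3g-3}$ and the value of $a_0(x)$. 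The odd-function case of Theorem \ref{theo:pd1} with $n = g-1$ yields order $k^{1-g}$, combining with the prefactor to give the secondary term of order $k^{2g-3/2}$ with coefficient $b_0(x)$ once the explicit value of $\la_{g,0}$ from Theorem \ref{theo:counting_smoot} is inserted. The $S^-$ contributions are $\bigo(k^{-\infty})$ on the even part by Theorem \ref{theo:pd2} and strictly subdominant on the odd part, so they enter only in higher order corrections.

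The second point $x = p_2 = [\mu/2]$ occurs only when $b$ is even and can be handled by the same analysis after a change of summation variable $\ell \mapsto k - \ell$ that translates $p_2$ to $p_1$ and modifies the phase by an explicit bounded factor. The main obstacle will be the careful bookkeeping of phase, sign and normalization factors: the sign of $\al = -2\pi a/b$, the orientation-dependent factor $e^{i(\pi/4)\op{sgn}\al}$ from Theorem \ref{theo:pd1}, the combinatorial constants $(i/(2\al))^n(2n)!/n!$ and $(2i/\al)^{n+1}(n!/2)$ appearing in the leading coefficients $c_0$, and the ubiquitous factor $C_g = (-1)^{g-1} 2^{-g}$ from Lemma \ref{lem:relat-with-count} relating $N^{g,k}_\ell$ to $\Xi_{2g-1}$. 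Careful matching of these factors should reproduce precisely the stated values of $a_0(x)$ and $b_0(x)$.
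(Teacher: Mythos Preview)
Your proposal is correct and follows essentially the same approach as the paper: write $I_{p_1}(k)$ as a scalar product in the basis $(e_\ell)$, feed in Proposition~\ref{lem:state-remplissage-e} for the solid-torus side and Theorem~\ref{theo:counting_smoot} for the surface side, split according to the parity factor $(1-(-1)^\ell)/2$ and the even/odd decomposition $P_{g,m} = P_{g,m}^+ + P_{g,m}^-$, and then apply Theorems~\ref{theo:pd1} and~\ref{theo:pd2} term by term. Your identification of the leading even and odd Taylor orders ($n=1$ and $n=g-1$ respectively) is exactly what the paper does, and the observation that $P_{g,m}^+(x) = \bigo(x^{2(g-m-1)})$ ensures the $m\geqslant 1$ terms do not affect $b_0$.

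One small correction: for $x = p_2 = [\mu/2]$ (which has $q=0$, $p=1/2$), the change of summation variable $\ell \mapsto k-\ell$ shifts in the $q$-direction, not the $p$-direction, so it does not send $p_2$ to $p_1$. The paper instead uses the translation operator $T^*_{\mu/2}$: one has $(T^*_{\mu/2} + \op{id})Z_k(\Si\times S^1)=0$, and since $b$ is even (hence $a$ odd) one checks $\nu/2 \equiv \mu/2$ in $M$, so $T^*_{\mu/2}\rho_k(\varphi)Z_k(D\times S^1)$ is again a Lagrangian state on $B$ with symbol $-\si_B$. This transports the computation at $p_2$ back to $p_1$ verbatim.
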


\begin{proof}
 $I_{p_1} (k)$ is equal to the scalar product of $Z_k  ( \Si \times S^1)$ with the vector $Z_k$ introduced in (\ref{eq:4}) 
$$ Z_k = \sum_{\ell =1 }^{k-1} \bigl\langle f_U \rho_k ( \varphi ) (Z_k (D \times S^1)), e_{\ell} \bigr\rangle e_\ell . $$
 The asymptotic behavior of the coefficients of $Z_k$ is given in Proposition \ref{lem:state-remplissage-e}. 
By Lemma \ref{lem:SiS} and Theorem \ref{theo:counting_smoot}, $Z_k ( \Si \times S^1)$ is the sum of four terms $Z_k^{+,+}$, $Z_k^{+,-}$, $Z_k^{-,+}$, $Z_k^{-,-}$ whose coefficient in the basis $( e_{\ell})$ are 
$$Z_k^{+, \pm} ( \ell ) = \frac{1}{2} \Bigl( \frac{k}{2\pi} \Bigr)^{ 3 g -2 }  \sum _{m = 0 }^{g-1 } k^{-2m} P^{\pm}_{g, m} \Bigl( \frac{\ell}{k} \Bigr) , \quad  Z_k^{-, \pm} ( \ell ) = (-1)^{\ell+1} Z_k^{+, \pm} ( \ell ) .$$
As a consequence of Theorem \ref{theo:pd2}, we have
\begin{gather} \label{eq:5}
 \bigl\langle Z_k , Z_k ^{-, -} \bigr\rangle = \bigo( k^{- \infty}), \qquad \bigl\langle Z_k , Z_k ^{-, +} \bigr\rangle = k^{g - 3/2} \sum_{\ell = 0 }^{\infty} k^{-\ell} c_\ell 
\end{gather}
for some coefficient $c_{\ell}$.  To prove the second formula of Equation (\ref{eq:5}), we have to take into account that 
\begin{gather} \label{eq:2}
P^{+}_{ g,m} (x) = \bigo( x^{ 2 ( g-m  -1)} ).
\end{gather}
 By Theorem \ref{theo:pd1}, we have
 $$ \bigl\langle Z_k , Z_k ^{+, -} \bigr\rangle = \Bigl( \frac{ k } { 2 \pi} \Bigr)^{3g - 3}     \sum_{\ell =0 } ^{\infty} a_\ell(x)  k^{-\ell}  , \qquad \bigl\langle Z_k , Z_k ^{+, +} \bigr\rangle = k^{ 2g -3/2}     \sum_{\ell = 0 }^{\infty}  b_\ell(x) k^{-\ell} 
$$
where $a_0$ and $b_0$ are given by the formula in the statement. To compute $b_0$, we use the expression for $P_{g,0}^{+}$ given in Theorem \ref{theo:counting_smoot}. Furthermore, Equation (\ref{eq:2}) implies that the polynomials $P_{g,m}$ with $m \geqslant 1$ do not enter in the computation.  Since $ 2g - 3/2 > g - 3/2$, $\bigl\langle Z_k , Z_k ^{-, +} \bigr\rangle$ does not contribute to the leading order terms. This concludes the proof for $x= p_1$. 

Assume that $b$ is even. Then $X_3$ consists of $p_1$ and $p_2$ and by a symmetry argument, we see that the computation of $I_{p_3} (k)$ is the same as the one of $I_{p_1} (k)$. Indeed, we have that $(T_{\mu/2}^{*}  + \op{id} ) Z_k (\Si \times S^1)=0$. Furthermore, by Theorem \ref{theo:state-tore_solide},  $T^*_{\nu/2}  \rho_k( \varphi )Z_k ( D \times S^1) =0 $ is a Lagrangian state with associated data $(B, \Theta_B, -\si_B, 0)$. Since $b$ is even, $\mu/2 = \nu/2$. Clearly, $ p_1 + \mu/2 = p_2$, which concludes the proof.   
\end{proof}

\section{Singular discrete stationary phase} \label{sec:sing-discr-stat}

In this section, we prove Theorem \ref{theo:pd1} and Theorem \ref{theo:pd2}.
Let $\al, \be$ be two real numbers. Assume that $\al \neq 0$. Denote by $\R_{+}$ the set of non negative real numbers. For any function $\si \in \Ci_0 ( \R ^{+})$ and positive $\tau$, introduce the sum
$$ S _{\tau}( \si)  = \frac{\si (0)}{2} + \sum_{\ell =1 }^{\infty} e^{i ( \frac{\al}{2} \frac{ \ell^2}{ \tau} - \be \ell)} \si \Bigl( \frac{\ell}{ \tau} \Bigr) $$
In this appendix we study the asymptotics of $S_{\tau} ( \si)$ as $\tau$ tends to infinity. Our treatment is partly inspired by the paper \cite{KeKn}. We will adapt the stationary phase method. The relevant variable is $x = \ell / \tau$. As we will see, the set of stationary points is $\frac{\be}{\al} + 2 \pi \Z$. The origin also contributes non trivially to the asymptotic because the sum starts at $\ell =0$. In Theorem \ref{theo:pd1}, we are in the most delicate situation, because $\be =0$, and the origin is both a stationary point and an endpoint of the summation interval. Let us start with the easiest case where the support of $\si$ does not contain any stationary point. 

\begin{theo}  \label{theo:app1}
For any $\si \in \Ci _0( \R_+)$ such that $\op{Supp} \si \cap \bigl( \frac{\be}{\al} + 2 \pi \Z \bigr) = \emptyset$ and $\si (x) = \bigo( x^n) $ at the origin, we have the following asymptotic expansion
$$ S( \si) = k^{-n} \sum_{\ell =0 }^{ \infty} k^{-\ell} c_{\ell} $$
for some complex numbers $c_{\ell}$. 
\end{theo}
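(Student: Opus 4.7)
The plan is to prove Theorem~\ref{theo:app1} by iterated discrete summation by parts, which is the discrete counterpart of the non-stationary phase estimate for oscillatory integrals.

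Set $\varphi(\ell) := \frac{\al}{2}\ell^2/\tau - \be\ell$ and $\omega(\ell) := e^{i(\varphi(\ell+1) - \varphi(\ell))} - 1$. I would start from the identity $e^{i\varphi(\ell)} = \omega(\ell)^{-1}\bigl(e^{i\varphi(\ell+1)} - e^{i\varphi(\ell)}\bigr)$; substituting into $S_\tau(\si)$ and performing Abel summation, with the boundary at infinity vanishing by compactness of $\op{Supp}\si$, yields a boundary term at $\ell = 1$ plus a sum of the same type but with new amplitude $\Delta\bigl(\si(\ell/\tau)/\omega(\ell)\bigr)$. The hypothesis on $\op{Supp}\si$, which is precisely the condition that the phase $\varphi$ has no stationary point in the support, ensures $|\omega(\ell)|$ is bounded below uniformly in $\tau$ for $\ell/\tau$ near $\op{Supp}\si$. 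Since $\si$ and $1/\omega$ are smooth functions of $\ell/\tau$, the new amplitude is $\bigo(\tau^{-1})$ in sup norm.

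Iterating this procedure $N$ times produces $N$ boundary contributions at small $\ell$ plus a remaining oscillatory sum whose amplitude is $\bigo(\tau^{-N})$ in sup norm. By compactness of $\op{Supp}\si$ the latter has $\bigo(\tau)$ nonzero terms and is therefore of size $\bigo(\tau^{1-N})$. Each boundary contribution involves only the values $\si(j/\tau)$ for small fixed $j$, which by the hypothesis $\si(x) = \bigo(x^n)$ are $\bigo(\tau^{-n})$; moreover, because the smooth factors $\si$ near $0$, $\omega$, and $e^{i\varphi}$ all admit asymptotic expansions in $\tau^{-1}$, each boundary contribution admits a full expansion of the form $\tau^{-n}\sum_{j \geq 0}c_{k,j}\tau^{-j}$. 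Reorganizing the finite sum by powers of $\tau^{-1}$ and letting $N \to \infty$ produces the claimed expansion $S_\tau(\si) = \tau^{-n}\sum_{\ell \geq 0} c_\ell\,\tau^{-\ell}$ modulo $\bigo(\tau^{-\infty})$.

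The main technical point is the uniformity in $\tau$ of the successive amplitudes after iteration: one has to verify that the smooth functions appearing after $N$ applications of $b \mapsto \Delta(b/\omega)$ have all their finite differences bounded independently of $\tau$. This rests on the observation that $\Delta\varphi(\ell) = \al(\ell/\tau) - \be + \bigo(\tau^{-1})$, viewed as a function of $y = \ell/\tau$, is smooth and $\tau$-independent to leading order, so that the operator $\Delta$ acts like $\tau^{-1}\partial_y$ on slowly varying amplitudes; the bookkeeping needed to extract the formal series of coefficients $c_\ell$ from the successive boundary terms is then entirely routine.
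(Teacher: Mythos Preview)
Your proposal is correct and is essentially the same argument as the paper's. Both proofs perform iterated Abel summation, using that the phase increment $e^{i(\varphi(\ell+1)-\varphi(\ell))}-1$ (your $\omega(\ell)$, the paper's $\delta-1$) is bounded away from zero on $\op{Supp}\si$ precisely because of the non-stationary hypothesis; each iteration produces a boundary term at the left endpoint and gains a factor $\tau^{-1}$ in the remaining sum. The paper packages the bookkeeping you call ``entirely routine'' into a formal notion of \emph{symbol} (a family admitting a full expansion in $\tau^{-1}$), writes $\si=\ga(\delta-1)$, and applies the summation-by-parts identity to $\ga$ rather than to $\si$ directly, but this is only a notational difference from your $b\mapsto\Delta(b/\omega)$ iteration. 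Your handling of the vanishing order at the origin (boundary terms involve only $\si(j/\tau)$ for bounded $j$, hence are $\bigo(\tau^{-n})$) matches the paper's one-line remark at the end of its proof.
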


For the proof, we will have to consider more general sums of the form $S_\tau ( \rho ( \cdot, \tau))$ where $\rho ( \cdot, \tau)$, is a family of functions in $\Ci_0 ( \R_{+})$ whose supports are contained in a fixed compact subset of $\R_{+}$ and which admits a complete  asymptotic expansion in inverse power of $\tau$, 
$\rho ( \cdot, \tau) = \rho_0 + \tau ^{-1} \rho_1 + \tau^{-2} \rho_2 + \ldots $, 
for the $\Ci$ topology. We call such a family $(\rho ( \cdot, \tau))$ a {\em symbol}. In particular, for any function $f \in \Ci (\R)$, we will denote by $f (  \frac{1}{\tau} \frac{\partial}{\partial x}) \si$ any symbol with the expansion 
$$ f ( 0 ) \si  + \tau^{-1} f'(0) \si' + \tau^{-2} f^{(2)} (0 ) \si^{(2)} + \ldots $$ 
We will also use the notation $D = \frac{1}{\tau} \frac{\partial}{\partial x}$.

\begin{proof} The sum 
$ \tilde{S}_\tau ( \si) =  \sum_{\ell =0 }^{\infty} e^{i ( \frac{\al}{2} \frac{ \ell^2}{ \tau} - \be \ell)} \si \bigl( \frac{\ell}{ \tau} \bigr) $ satisfies the relation
\begin{gather} \label{eq:1}
 \tilde{S}_{\tau} ( \sigma ( \delta -1 ) ) + \tilde{ S}_{\tau} ( \delta   ( e ^D - 1 ) \si \bigr) + \si ( 0) = 0  
\end{gather}
 where $\delta$ is the symbol $ \delta ( x, \tau) =   e^{ i( \al x - \be ) + i \al /2 \tau }$. To prove this, we 
 apply the summation by part formula
$$ \sum_{\ell = 0 } ^{n} f_{\ell} ( g_{\ell+1} - g_{\ell} ) + \sum_{\ell=0 }^{n} g_{\ell+1} ( f_{\ell + 1} - f_{\ell} ) = f_{n+1} g_{n+1} - f_0 g_0 $$
to the sequences $f_{\ell} = \si \bigl( \frac{\ell}{\tau} \bigr)$ and $g_{\ell} = \exp \bigl( i \frac{\al}{2} \frac{\ell^2}{\tau} - i \be \ell \bigr).$ 
Observe that 
$$ f_{\ell +1} = \si \Bigl( \frac{\ell}{\tau} \Bigr) + \tau^{-1} \si' \Bigl( \frac{\ell}{\tau} \Bigr) + \tfrac{1}{2} \tau^{-2} \si'' \Bigl( \frac{\ell}{\tau} \Bigr) + \ldots = \bigl( e^{D} \si \bigr)  \Bigl( \frac{\ell}{\tau} \Bigr) + \bigo( \tau^{-\infty})$$
so that $f_{\ell+1} - f_{\ell} = \bigl( ( e ^D - 1 ) \si \bigr) ( \ell / \tau)$. Furthermore $g_{\ell +1} = g_{\ell} \delta( \ell/ \tau , \tau) $ and Equation (\ref{eq:1}) follows. 

We have $$\delta (x,\tau) - 1= e^{ \frac{i}{2} ( \al x - \be ) } \sin \bigl( \tfrac{1}{2} ( \al x - \be ) \bigr) + \bigo ( \tau^{-1}).$$ Observe that the zero set of $  \sin \bigl( \frac{1}{2} ( \al x - \be ) \bigr)$ is $ \frac{\be}{\al} + 2 \pi \Z$. So if the support of $\si$ does not intersect this set, we can write $\si = \ga (\delta -1) $ for some symbol $\ga$. Let us apply (\ref{eq:1}) to $\ga$, we obtain 
$$ \tilde{S}_{\tau} ( \si) =  - \si (0) ( \delta (0) -1) + \tau^{-1} \tilde{S}_{\tau} ( \si_1)   $$ 
where $\si_1$ is the symbol $\tau ( \delta ( e^{D} -1) \ga $. Since the support of $\si_1$ is smaller than the support of $\si$, we can do the same computation with $\tilde{S}_{\tau} ( \si_1)$. In this way, we prove that $S_{\tau} ( \si)$ has a complete asymptotic expansion in power of $\tau^{-1}$. With a careful inspection of this computation, we also get that $S_{\tau}  ( \si) = \bigo( k^{-n})$ if $\si$ vanishes to order $k$ at the origin. 
\end{proof}

Choosing $\be = \pi$ in the last result, we obtain Theorem \ref{theo:pd2}. For the proof of Theorem \ref{theo:pd1}, we will use the following relation, which has the advantage to be  more symmetric that Equation (\ref{eq:1}). In the remainder of the appendix, we assume that $\be =0$. 

\begin{lem} \label{lem:sum_part}
For any $\si \in \Ci _0( \R_+)$, we have 
$$   S_{\tau} \bigl( \sin ( \al \cdot ) \si \bigr) = \tfrac{i}{2} \si (0)+  i e^{- i \al / (2 \tau)} \Bigl( S_{\tau}  \bigl( \sinh (D) \si \bigr) + \tfrac{1}{2} ( \cosh (D) \si ) (0)  \Bigr) $$
up to a $\bigo ( \tau^{-\infty})$.  
\end{lem}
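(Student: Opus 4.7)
The plan is to mimic the summation-by-parts derivation of Equation (\ref{eq:1}) but with the multiplier $\sin(\al x)$ handled by the following more symmetric identity: the relation $(\ell\pm 1)^2=\ell^2\pm 2\ell+1$ yields
\[
\sin\bigl(\al\tfrac{\ell}{\tau}\bigr)\,e^{i\al\ell^2/(2\tau)} \;=\; \frac{1}{2i}\,e^{-i\al/(2\tau)}\bigl(e^{i\al(\ell+1)^2/(2\tau)}-e^{i\al(\ell-1)^2/(2\tau)}\bigr),
\]
so that $S_\tau(\sin(\al\cdot)\si)$ becomes a difference of two sums whose quadratic phases have been re-centered at $\ell+1$ and $\ell-1$ respectively.

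I would then re-index by $m=\ell+1$ and $m=\ell-1$, restoring the common phase $e^{i\al m^2/(2\tau)}$ but replacing $\si(\ell/\tau)$ with $\si((m\mp1)/\tau)$. After extending $\si$ once and for all to a compactly supported smooth function on $\R$, the shift operators $e^{\pm D}$ act on the Taylor jet of $\si$ and satisfy $\si((m\pm1)/\tau)\equiv(e^{\pm D}\si)(m/\tau)$ uniformly modulo $\bigo(\tau^{-\infty})$. Each sum then becomes the extended version $\tilde S_\tau(e^{\mp D}\si):=\tfrac{1}{2}(e^{\mp D}\si)(0)+S_\tau(e^{\mp D}\si)$, up to boundary corrections. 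The sum originating from the shift $m=\ell+1$ naturally ranges over $m\geqslant 2$ rather than $m\geqslant 0$, which costs the two boundary terms $(e^{-D}\si)(0)$ (from $m=0$) and $e^{i\al/(2\tau)}\si(0)$ (from $m=1$); the other sum already ranges from $m=0$ and requires no correction.

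Combining the two sums gives $\tilde S_\tau(e^{-D}\si)-\tilde S_\tau(e^D\si)=-2\tilde S_\tau(\sinh(D)\si)$. Multiplying by the prefactor $\tfrac{1}{2i}e^{-i\al/(2\tau)}$ and using $-1/i=i$ produces
\[
ie^{-i\al/(2\tau)}\tilde S_\tau(\sinh(D)\si) + \tfrac{i}{2}e^{-i\al/(2\tau)}(e^{-D}\si)(0) + \tfrac{i}{2}\si(0)
\]
modulo $\bigo(\tau^{-\infty})$. Splitting $\tilde S_\tau = \tfrac{1}{2}(\cdot)(0)+S_\tau$ and applying the elementary symbol identity $\tfrac{1}{2}\sinh(D)+\tfrac{1}{2}e^{-D}=\tfrac{1}{2}\cosh(D)$ (equivalently $\tfrac{1}{4}(e^D-e^{-D})+\tfrac{1}{2}e^{-D}=\tfrac{1}{4}(e^D+e^{-D})$) collapses the two remaining boundary contributions into $\tfrac{1}{2}(\cosh(D)\si)(0)$, which is exactly the stated formula.

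The only delicate point is the careful bookkeeping of the boundary contributions produced by the two index shifts and recognizing that they recombine into $\cosh(D)$; no new analytic estimate is required, since the smooth extension of $\si$ from $\R_+$ to $\R$ is harmless modulo $\bigo(\tau^{-\infty})$ (the shift operators $e^{\pm D}$ depend only on the Taylor jet at the base point) and every step lies within the symbol calculus already used in this section.
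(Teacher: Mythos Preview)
Your proof is correct and follows essentially the same approach as the paper's: the paper packages the computation as a symmetric Abel-type summation-by-parts identity (with centered differences $\delta_\ell(h)=h_{\ell+1}-h_{\ell-1}$) applied to $f_\ell=\si(\ell/\tau)$ and $g_\ell=e^{i\al\ell^2/(2\tau)}$, which when unwound is exactly your substitution of the identity $\sin(\al\ell/\tau)g_\ell=\tfrac{1}{2i}e^{-i\al/(2\tau)}(g_{\ell+1}-g_{\ell-1})$ followed by re-indexing. The boundary bookkeeping and the recombination into $\cosh(D)$ are identical in both arguments.
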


\begin{proof} 
We will use the following summation by part formula 
\begin{gather*} 
 \tfrac{1}{2} f_0  \delta_0  (g)    + \sum_{\ell =1 }^{n-1}f_\ell \delta_{\ell} (g) + \tfrac{1}{2} f_n \delta_n (g) +  \tfrac{1}{2} g_0  \delta_0  (f)    + \sum_{\ell =1 }^{n-1}g_\ell \delta_{\ell} (f) + \tfrac{1}{2} g_n \delta_n (f)  + \\
 \tfrac{1}{2} g_0 ( f_1 + f_{-1}) + \tfrac{1}{2} f_0 ( g_1 + g_{-1}) - \tfrac{1}{2} f_{n} ( g_{n-1} + g_{n+1} ) - \tfrac{1}{2} g_n ( f_{n-1} + f_{n+1}) = 0  
\end{gather*}
to the same sequences $f_{\ell}$ and $g_{\ell}$ that we used in the proof of Theorem \ref{theo:app1}.  We have that 
$$ \delta_{\ell} (g) =    2i g_{\ell} \sin \bigl( \al \ell / \tau \bigr) \exp ( i \al / 2 \tau ) , \qquad \tfrac{1}{2} ( g_1 + g_{-1} ) = \exp ( i \al / 2 \tau ). $$
Furthermore
$$ \tfrac{1}{2}  \delta_{\ell} f \equiv  \bigl( \sinh (D) \si \bigr)  \Bigl( \frac{\ell}{\tau} \Bigr), \qquad \tfrac{1}{2} ( f_{1} + f_{-1} ) \equiv \bigl( \cosh (D) \si \bigr) ( 0 ) $$
up to a $\bigo ( \tau^{-\infty})$. 
Applying these expressions in the summation by part formula with $n$ sufficiently large, we get
$$  
2 i e^{i \frac{\al}{2\tau}} S_{\tau} ( \sin ( \al \cdot ) \si ) + 2 S_{\tau} ( \sinh (D) \si ) + \bigl( \cosh (D) \si \bigr) ( 0) + 
\si (0) e^{i \frac{\al}{2 \tau}} \equiv 0 $$
up to a $\bigo( \tau^{-\infty})$,
which was the result to proved. 
\end{proof}

\begin{lem} \label{lem:leading_term}
Let $\rho \in \Ci( \R_+)$ with support contained in $[ 0, \tfrac{2\pi}{ |\al|} )$ and such that $\rho \equiv 1$ on a neighborhood of $0$. Then 
$$ \frac{2}{\tau} S_{\tau} ( \rho ) = \Bigl( \frac{2 \pi } { \tau} \Bigr) ^{ 1/2} \frac{ e^{i \frac{\pi}{4} \op{sgn} \al }}{| \al |^{1/2}}  + \bigo ( \tau ^{-\infty}).$$
\end{lem}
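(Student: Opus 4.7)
The plan is to reduce $S_\tau(\rho)$ to a Fresnel integral by Poisson summation. Since $\rho \equiv 1$ in a neighborhood of $0$, the even extension $\tilde\rho(x) := \rho(|x|)$ belongs to $\Ci_0(\R)$, and combining the boundary term $\tfrac{1}{2}\rho(0)$ with the symmetric tail yields
\[
 2\, S_\tau(\rho) = \sum_{\ell \in \Z} e^{i \al \ell^2/(2\tau)}\, \tilde\rho(\ell/\tau).
\]
Applying the Poisson summation formula to $f(x) = e^{i \al x^2/(2 \tau)} \tilde\rho(x/\tau)$, and then substituting $u = x/\tau$, I would obtain
\[
 2\, S_\tau(\rho) = \tau \sum_{n \in \Z} \int_\R e^{i \tau (\al u^2/2 - 2 \pi n u)}\, \tilde\rho(u)\, du.
\]

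Next, I would discard the $n \neq 0$ terms by non-stationary phase. The critical point of the phase $u \mapsto \al u^2/2 - 2 \pi n u$ is $u_n = 2 \pi n /\al$, which has modulus at least $2 \pi/|\al|$ and therefore lies strictly outside $\op{supp} \tilde\rho$, since the latter is contained in $(-2\pi/|\al|, 2\pi/|\al|)$. Repeated integration by parts with the operator $(i\tau(\al u - 2\pi n))^{-1} \partial_u$ produces, for any $N$, a bound of order $\tau^{-N} |n|^{-N}$ on $\op{supp}\tilde\rho$ once $|n|$ is large enough, so the $n \neq 0$ contribution is $\bigo(\tau^{-\infty})$.

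For $n=0$, I would apply the standard stationary phase lemma to $\int_\R e^{i \tau \al u^2/2} \tilde\rho(u)\, du$: the phase has a unique non-degenerate critical point at $u=0$ with second derivative $\al$, and since $\tilde\rho \equiv 1$ in a neighborhood of $0$ while the phase has no Taylor coefficients beyond order two, every subleading term in the stationary phase expansion vanishes identically, giving
\[
 \int_\R e^{i \tau \al u^2/2}\, \tilde\rho(u)\, du = \Bigl( \frac{2 \pi}{\tau |\al|}\Bigr)^{1/2} e^{i \frac{\pi}{4} \op{sgn} \al} + \bigo(\tau^{-\infty}).
\]
Multiplying by $\tau$, and then dividing by $\tau$, produces the claimed expression for $\tfrac{2}{\tau} S_\tau(\rho)$.

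The main technical obstacle is keeping the non-stationary phase estimate uniform in $n$ so that the $n \neq 0$ terms sum to $\bigo(\tau^{-\infty})$; the saving point is that $|\al u - 2\pi n|$ grows linearly in $|n|$ on $\op{supp} \tilde\rho$, so each integration by parts gains a factor $\tau^{-1} |n|^{-1}$ and the resulting series in $n$ is absolutely convergent after finitely many steps. Everything else is a routine application of Poisson summation and the classical stationary phase formula.
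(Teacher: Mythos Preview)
Your proof is correct and follows essentially the same route as the paper: extend $\rho$ evenly, symmetrize the sum, apply Poisson summation, kill the $n\neq 0$ terms by non-stationary phase, and evaluate the $n=0$ term by stationary phase. Your write-up is in fact more explicit than the paper's on two points the paper leaves to the reader: the uniformity in $n$ needed so that $\sum_{n\neq 0}$ is $\bigo(\tau^{-\infty})$, and the vanishing of all subleading stationary-phase coefficients (because $\tilde\rho\equiv 1$ near $0$ and the phase is exactly quadratic).
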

\begin{proof} 
We extend $\rho$ to a smooth even function on $\R$.  Then 
$$2 S_{\tau} ( \rho) = \sum_{\ell= - \infty}^{ + \infty} e^{i \frac{\al}{2} \frac{ \ell^2}{ \tau}} \rho \Bigl( \frac{\ell}{ \tau} \Bigr) $$
By Poisson formula, 
$$ 2 S_{\tau} ( \rho ) = \tau \sum_{\ell = - \infty}^{\infty} I_{\ell}, \qquad \text{ with } \quad I_{\ell} = \int_{\R} e^{ i \tau ( \frac{\al}{2} x^2 - 2 \pi x \ell ) } \rho (x) dx .$$
We can estimate each $I_\ell$ by stationary phase method. For $\ell \neq 0$,  the phase $\frac{\al}{2} x^2 - 2 \pi x \ell$ has a unique critical point $2 \pi \ell / \al$. This point not belonging to the support of $\rho$,   $I_\ell =\bigo ( \tau^{-\infty})$. We can actually prove the stronger result that 
$$ \sum _{\ell \neq 0} I_{\ell}  = \bigo ( \tau^{-\infty} \bigr).$$
Estimating $I_0$ we get the final result. 
\end{proof}

\begin{theo} \label{theo:sing-discr-stat}
Let $\si \in \Ci_0  ( \R_+)$ with support contained in $[0, \tfrac{2 \pi}{ |\al|})$. Then 
$$ S_{\tau} (\si ) = \tau^{1/2} \sum_{\ell = 0 } ^{\infty} a_{\ell} \tau^{-\ell} + \sum_{\ell =0 }^{\infty}  b_{\ell} \tau^{-\ell} $$
where the leading coefficients are
$$ a_0 = \Bigl( \frac{\pi}{2} \Bigr)^{1/2}   \frac{ e^{i \frac{\pi}{4} \op{sgn} \al }}{| \al |^{1/2}} \si(0) , \qquad b_0 = i \frac{\si' (0)}{\al} .$$   
\end{theo}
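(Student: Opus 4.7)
The plan is to decompose $\si$ via Borel's theorem as $\si = \si_e + \si_o + \si_f$ with all three summands in $\Ci_0([0, 2\pi/|\al|))$, where $\si_e$ (resp.\ $\si_o$) has only even (resp.\ only odd) Taylor coefficients at $0$, and $\si_f$ is flat at $0$. Each piece is then handled by a different argument: the flat part contributes only $\bigo(\tau^{-\infty})$, the Taylor-even part produces the $\tau^{1/2}$-series via Poisson summation, and the Taylor-odd part produces the integer-power series via an iterated application of Lemma \ref{lem:sum_part}.

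The flat and even pieces are handled by the same Poisson argument as in the proof of Lemma \ref{lem:leading_term}. Their even extensions $\si_f^+$, $\si_e^+$ are genuinely smooth on $\R$ with support in $(-2\pi/|\al|, 2\pi/|\al|)$, so $2 S_\tau(\si_\star) = \tau \sum_{m\in\Z} \int_\R e^{i\tau(\al x^2/2 - 2\pi m x)} \si_\star^+ (x) \, dx$. For every $m \neq 0$ the stationary point $2\pi m/\al$ lies outside the support, and for $m = 0$ stationary phase at the origin produces a complete expansion in powers of $\tau^{-1}$ whose coefficients are the even Taylor derivatives at $0$. This is $\bigo(\tau^{-\infty})$ for $\si_f$ and gives
\[
S_\tau(\si_e) = \tau^{1/2} \Bigl( \frac{\pi}{2|\al|} \Bigr)^{1/2} e^{i \frac{\pi}{4} \op{sgn}\al} \sum_{k\geqslant 0} \frac{1}{k!} \Bigl( \frac{i}{2\al\tau} \Bigr)^k \si^{(2k)}(0) + \bigo(\tau^{-\infty}),
\]
whose structure $\tau \cdot \sqrt{2\pi/(\tau|\al|)} \cdot (\text{series in }\tau^{-1})$ produces exclusively the $\tau^{1/2-\ell}$ terms of the theorem, with leading coefficient $a_0 = (\pi/(2|\al|))^{1/2} e^{i\pi\op{sgn}\al/4} \si(0)$.

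The odd part is the delicate piece, since Poisson applied to its odd extension is identically zero. I would first cut off by $\chi \in \Ci_0([0, \pi/|\al|))$ equal to $1$ near $0$; by Theorem \ref{theo:app1}, $S_\tau((1-\chi)\si_o) = \bigo(\tau^{-\infty})$ because $\op{supp}((1-\chi)\si_o)$ is disjoint from the unique stationary point $0$ in $[0, 2\pi/|\al|)$. On $[0, \pi/|\al|)$ the ratio $x/\sin(\al x)$ is smooth, Taylor-even, and nonvanishing, so one can write $\chi \si_o = \sin(\al \cdot) \, w$ with $w$ smooth and Taylor-even, supported in $[0, \pi/|\al|)$, and $w(0) = \si'(0)/\al$. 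Lemma \ref{lem:sum_part} then yields
\[
S_\tau(\chi \si_o) = \tfrac{i}{2} w(0) + i e^{-i\al/(2\tau)} \bigl[ S_\tau(\sinh(D) w) + \tfrac{1}{2}(\cosh(D) w)(0) \bigr] + \bigo(\tau^{-\infty}).
\]
Taylor-evenness of $w$ makes $(\cosh(D) w)(0)$ a formal power series in $\tau^{-2}$, while $\sinh(D) w$ is a symbol of order $\tau^{-1}$ whose coefficients $w^{(2k+1)}$ are Taylor-odd. Rewriting each $S_\tau(w^{(2k+1)})$ by the same factorization device and reapplying Lemma \ref{lem:sum_part} produces an inductive scheme in which each iteration contributes explicit terms to the series while leaving a remainder of the form $\tau^{-1} S_\tau(\text{Taylor-odd})$. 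The strict $\tau^{-1}$ gain per step yields a genuine asymptotic expansion $S_\tau(\si_o) = \sum_\ell b_\ell \tau^{-\ell}$ with leading coefficient $\tfrac{i}{2} w(0) + \tfrac{i}{2} w(0) = i\si'(0)/\al = b_0$.

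The main obstacle is this iteration in the odd case: one must check that the recursion truly assembles into a single asymptotic series in integer powers of $\tau^{-1}$, with no stray $\tau^{1/2}$ contributions and with a uniform control of remainders. The key point is that at each depth $N$ one can truncate the Taylor expansion of $w$ at order $2N$, so that the discarded piece is of the form $x^{2N+1} \tilde w$ with $\tilde w$ smooth; after the factorization $x^{2N+1}\tilde w = \sin(\al x)^{2N+1} \tilde v$ (up to smooth even factors), this residual $S_\tau$ is absorbed into $\bigo(\tau^{-N})$ thanks to the $\tau^{-1}$ weight introduced at each of the $N$ iterations, with the final remainder controlled by Theorem \ref{theo:app1} applied to the support.
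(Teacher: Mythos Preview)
Your argument is correct and takes a genuinely different route from the paper's. The paper does \emph{not} split $\si$ into Taylor-even and Taylor-odd parts; instead it treats a general $\si$ by writing $\si = \si(0)\rho - i\sin(\al\cdot)\,\si_1$ with $\rho$ as in Lemma~\ref{lem:leading_term}, applies Lemma~\ref{lem:sum_part} to the second summand, and iterates on the symbol $\tau\sinh(D)\si_1$. At each depth this simultaneously produces one term of the $\tau^{1/2}$-series (from $\si_j(0)\,S_\tau(\rho)$) and one term of the integer-power series (from $\si_{j+1}(0)$); the observation that the two series correspond respectively to the even and odd Taylor parts of $\si$ is only made afterwards, in the proof of Theorem~\ref{theo:sing-discr-stat++}. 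Your decomposition front-loads that observation: the Taylor-even piece is dispatched in one stroke by Poisson summation plus stationary phase (a direct extension of Lemma~\ref{lem:leading_term} from $\rho$ to an arbitrary smooth even function), giving all the $a_\ell$ at once, while the Taylor-odd piece is handled by an iteration that preserves Taylor-oddness and therefore never needs to peel off a multiple of $\rho$, hence never generates half-integer powers. This is cleaner, explains the two-series structure conceptually, and essentially proves Theorem~\ref{theo:sing-discr-stat++} in passing. The paper's single iteration is slightly more economical in that it avoids Borel's lemma and the Whitney-type fact that a function on $\R_+$ with vanishing odd jets at $0$ has a smooth even extension.

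One small point: your final paragraph about the remainder is muddled (truncating the Taylor expansion of the Taylor-even $w$ at order $2N$ would leave a remainder $O(x^{2N+2})$, not $x^{2N+1}\tilde w$, and the subsequent $\sin(\al x)^{2N+1}$ factorization is unnecessary). The remainder control is much simpler than you suggest: after $N$ iterations the residual is $\tau^{-N}S_\tau(\psi_N)$ for a symbol $\psi_N$ with fixed compact support, and the trivial bound $|S_\tau(\psi_N)|=O(\tau)$ already gives $O(\tau^{1-N})$, which is all you need for an asymptotic expansion.
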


\begin{proof} By Theorem \ref{theo:app1}, we can assume that the support of $\si$ is contained in $[ 0, \frac{\pi}{|\al|} )$.  Let $\rho$ be a function satisfying the assumption of Lemma \ref{lem:leading_term}. Write  
$$ \si (x) = \si(0)\rho(x)  - i  \sin (\al x) \si_1 ( x ) $$
where  $\si_1 $ is in $\Ci_0 ( \R_+)$ with support in $[0, \tfrac{2\pi}{|\al|})$.
  We have by Lemma \ref{lem:sum_part} 
$$ S_{\tau} ( \si ) =  \si(0) S_{\tau} (  \rho) + \tfrac{1}{2} \si_1 (0) +  e^{- i \frac{\al}{ 2 \tau}} \Bigl( S_{\tau}  \bigl( \sinh (D) \si_1 \bigr) + \tfrac{1}{2} ( \cosh (D) \si_1 ) (0)  \Bigr)$$
By lemma \ref{lem:leading_term},  $\si (0) S_{\tau} ( \rho) = \tau^{1/2} a_0$ where $a_0$ is defined as in the statement. Furthermore $\tfrac{1}{2} \si_1 (0) + \tfrac{1}{2} ( \cosh (D) \si_1 ) (0) = \si_1 (0) + \bigo ( \tau^{-1})$. We also have $\si_1 (0) = b_0$.  So we obtain
$$ S_{\tau} ( \si ) =\tau^{1/2} a_0 + b_0 + \tau^{-1} R_\tau$$
where $R_{\tau}$ is given by 
$$ R_\tau= e^{- i \al / (2 \tau)} \Bigl( S_{\tau}  \bigl(  \tau \sinh (D) \si_1 \bigr) + \tfrac{\tau}{2} \bigl( (\cosh (D) \si_1 ) (0) -  \si_1 (0) \bigr)   \Bigr)
$$
Observe that $\tau \sinh (D) \si_1$ is a symbol, we can apply the same argument to $S_\tau ( \tau \sinh (D) \si_1 )$.
We prove in this way the result  by successive approximations.
\end{proof}

\begin{theo} \label{theo:sing-discr-stat++}
Let $\si \in \Ci_0 (\R_+, \C)$ with support contained in $[0, \frac{2\pi}{|\al|} )$. If $\si$ is even and $\si(x) = \la x^{2n} + \bigo( x^{2n+1})$ at the origin, then 
$$ S_\tau (f) =    \tau^{\frac{1}{2} - n } \Bigl( \frac{i \pi}{2\al} \Bigr)^{\frac{1}{2}}   \sum_{\ell= 0 } ^{\infty}  \tau^{-\ell} c_\ell  + \bigo ( \tau^{-\infty}) \quad \text{ with } \quad c_0 =  \Bigl( \frac{i}{2 \al} \Bigr)^n \frac{ ( 2n ) !}{ n!} \la .$$
If $\si$ is odd and $\si(x) = \la x^{2n+1} + \bigo( x^{2n+2})$, then
$$ S_\tau (f) =  \tau^{-n} \sum_{\ell= 0 } ^{\infty}  \tau^{-\ell} c_\ell  + \bigo ( \tau^{-\infty}) \quad \text{ with } \quad c_0 =  \Bigl( \frac{2i} {\al} \Bigr)^{n+1} \frac{ n !}{ 2} \la. $$
\end{theo}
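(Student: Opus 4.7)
The plan is to extend the iterative argument of the proof of Theorem \ref{theo:sing-discr-stat} by induction on $n$, supplemented by a parity argument that forces one of the two asymptotic series---the half-integer-power or the integer-power series---to vanish according to whether $\si$ is even or odd. By Theorem \ref{theo:app1}, multiplying $\si$ by a smooth compactly supported cutoff $\chi$ with $\chi \equiv 1$ near $0$ and support in an interval $[0, \ep)$ with $\ep < \pi/|\al|$ changes $S_\tau(\si)$ only by $\bigo(\tau^{-\infty})$, so I may assume $\si$ itself has support in $[0, \ep)$. Under this reduction, $\si_1(x) := i\si(x)/\sin(\al x)$ is a smooth function of opposite parity to $\si$ with support in $[0, \ep)$ and leading behaviour $\si_1(x) = (i\la/\al) x^{v(\si)-1} + \bigo(x^{v(\si)+1})$ near $0$, where $v(\si)$ denotes the order of vanishing of $\si$ at $0$.

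For the inductive step in the even case with $v(\si) = 2n \geq 2$, I write $\si = -i \sin(\al\cdot)\si_1$ and apply Lemma \ref{lem:sum_part} to obtain
$$S_\tau(\si) = \tfrac{1}{2}\si_1(0) + e^{-i\al/(2\tau)}\bigl[ S_\tau(\sinh(D)\si_1) + \tfrac{1}{2}(\cosh(D)\si_1)(0) \bigr] + \bigo(\tau^{-\infty}).$$
Since $\si_1$ is odd, all its even-order derivatives at $0$ vanish, killing both $\si_1(0)$ and $(\cosh(D)\si_1)(0)$. The remaining term expands as $S_\tau(\sinh(D)\si_1) = \sum_{k \geq 0} \tau^{-(2k+1)} S_\tau(\si_1^{(2k+1)})/(2k+1)!$, and each $\si_1^{(2k+1)}$ is an even function of vanishing order at least $2(n-1-k)$, so by the induction hypothesis its $S_\tau$ has only a half-integer-power series starting at $\tau^{1/2-(n-1-k)}$; multiplication by $\tau^{-(2k+1)}$ then gives contributions at $\tau^{1/2-n-k}$, yielding the claimed half-integer-power structure. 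The leading contribution comes from $k=0$: using $\si_1'(x) \sim (2n-1)(i\la/\al) x^{2n-2}$, the inductive formula gives $\tau^{1/2-n}(i\pi/(2\al))^{1/2}(i/(2\al))^{n-1}(2n-2)!/(n-1)! \cdot (2n-1)(i\la/\al)$, which reduces via the algebraic identity $(i/\al)(2n-1)!/(n-1)! = (i/(2\al))(2n)!/n!$ to the claimed $c_0 = (i/(2\al))^n(2n)!/n!\,\la$. The odd case with $v(\si) = 2n+1 \geq 3$ is handled analogously: $\si_1$ is now even of vanishing order $2n$, so $\si_1(0) = 0$ and $(\cosh(D)\si_1)(0) = \bigo(\tau^{-2n})$ is subleading, while the dominant integer-power contribution $\tau^{-n} c_0$ comes from $\tau^{-1}S_\tau(\si_1')$ by induction on the odd case applied to $\si_1'$ (odd, of vanishing order $2(n-1)+1$ with leading coefficient $2n \cdot i\la/\al$).

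The base case $n = 0$ is provided by Theorem \ref{theo:sing-discr-stat}, which gives the correct leading coefficient in both even and odd subcases. That the opposite series (the $\tau^0$ series in the even case, the $\tau^{1/2}$ series in the odd case) vanishes follows by iterating the decomposition from that proof and observing that at each iteration the symbol being processed retains the parity of the original $\si$---the operation $\si \mapsto \sinh(D)\si_1$, where $\si_1$ has opposite parity to $\si$, yields a symbol of the same parity as $\si$---so the relevant evaluations at $0$ (either of the symbol itself or of $\si_1$ at the next step, depending on the case) keep vanishing and kill the corresponding coefficient at each iteration. The principal difficulty is bookkeeping: one must check that parity alternation between $\si$ and $\si_1$ is stable through iterations, that the vanishing-order reductions are consistent, and that the leading coefficients accumulate to the claimed closed form for $c_0$---all routine but requiring step-by-step verification.
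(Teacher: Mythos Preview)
Your argument is correct and follows essentially the same approach as the paper: both proofs rest on Lemma~\ref{lem:sum_part}, the parity preservation under $\si \mapsto \sinh(D)\si_1$, and Theorem~\ref{theo:sing-discr-stat} as the base case. The paper packages the leading-coefficient computation via a filtration $\bigo(m)$ and the auxiliary function $\ga(x) = (\si(x)-\si(0)\rho(x))/x$ (so that $\sinh(D)\si_1 \equiv \tfrac{i}{\al} D\ga$ modulo $\bigo(m+1)$), whereas you carry out an explicit induction on $n$ working directly with the derivatives $\si_1^{(2k+1)}$; these are equivalent organizations of the same recursion, and your verification of the closed form for $c_0$ matches the paper's outcome.
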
   

\begin{proof} 
First, by adapting the proof of Theorem \ref{theo:sing-discr-stat}, we show that if $\si$ is even, the coefficients $b_\ell$ vanish, whereas if $\si$ is odd, the coefficient $a_\ell$ vanish. For instance, if $\si$ is even, $\si_1$ is odd, so that $\si_1 ( 0) = 0$ and $\sinh (D) \si_1$ is even. We conclude by iterating.

To compute the leading coefficients, we use the filtration $\bigo ( m)$, $m\in \N$ of the space of symbols defined as follows:  
$$f \in \bigo (m) \Leftrightarrow f = \sum_{0 \leqslant \ell \leqslant m/2} \tau^{-\ell} g_{\ell} + \bigo ( \tau^{-m/2}  )$$ where for any $\ell$, the coefficient $g_{\ell} \in \Ci ( \R_+)$ vanishes to order $m -2\ell$ at the origin. Observe that  if $f \in \bigo ( m+1)$ then $f ( 0 ) = \bigo ( \tau ^{- ( m+1) / 2} )$ and $Df \in \bigo ( m+1)$. 

Assume that $\si \in \bigo (m)$ and that we want to compute $S_{\tau} (\si)$ up to a $\bigo(\tau ^{- m/2})$. 
We consider again the proof of Theorem \ref{theo:sing-discr-stat}. Introduce the function
$$ \ga (x) = (\si (x) - \si (0) \rho (x) )/x .$$
Then $ \si_1 = \frac{i}{\al} \ga +  \bigo( m+1)$ and $\sinh (D) \si_1 = \frac{i }{\al} D \ga +  \bigo( m+1)$. From this, we deduce that 
$$ S_{\tau} ( \si ) = S_{\tau} ( \rho ) \si (0) + \tfrac{i}{\al}  \ga (0) + \tfrac{i }{\al}   S_{\tau} \bigl( D \ga + \bigo ( m+1) \bigr)  + \bigo(\tau ^{- m/2})$$
To conclude the proof, we choose $\si = \la x^m \rho$ and apply this formula as many times as necessary. 
\end{proof}

This completes the proof of Theorem \ref{theo:pd1}.

\section{Geometric interpretation of the leading coefficients} 
\label{sec:geom-interpr-lead}

\subsection{Symplectic volumes} \label{sec:symplectic-volumes}

For any $t \in [0,1]$, denote by $\mathcal{M} (\Si, t)$ the moduli space of flat $\op{SU}(2)$-principal bundles whose holonomy $g$ of the boundary $C = \partial \Si$ satisfies $\frac{1}{2}\op{tr} (g)  =   \cos ( 2 \pi t)$. Equivalently, $\mathcal{ M} ( \Si , t)$ is the space of conjugacy classes of group morphisms $\rho$ from $\pi_1 ( \Si)$ to $\op{SU}(2)$ such that for any loop $\ga \in \pi_1(\Si)$ isotopic to $C$, $\frac{1}{2}\op{ tr} ( \rho ( \ga)) =   \cos ( 2 \pi t)$. 

We say that a morphism $\rho$ from $\pi_1 ( \Si)$ to $\op{SU}(2)$ is irreducible if the corresponding representation of $\pi_1 ( \Si)$ in $\C^2$ is irreducible. 
The subset $\mo ^{\op{irr}} ( \Si, t)$ of $\mo ( \Si, t)$ consisting of conjugacy classes of irreducible morphisms is a smooth symplectic manifold.  Using the usual presentation of $\pi_1 ( \Si)$, one easily sees that any morphism $\rho : \pi_1 ( \Si )  \rightarrow \op{SU}(2)$ such that $\rho ( \ga) \neq \op{id} $ for $\ga$ isotopic to $C$, is irreducible. Consequently $\mo ^{\op{irr} } ( \Si , t ) = \mo ( \Si , t)$ for $t \in ( 0,1]$.  The subset of $\mo ( \Si , 0)$ consisting of non irreducible representation is in bijection with $\op{Mor} ( \pi_1 ( \Si ), \R / \Z)$, the set of group morphisms from $\pi_1 ( \Si)$ to $\R/Z$. 

Furthermore, for $t \in (0,1)$,  $\mo ( \Si , t)$ is $2 ( 3g -2 )$-dimensional, whereas $\mo ( \Si , 1) $ and $\mo ^{\op { irr}} ( \Si , 0)$ have dimension $2 ( 3g -3)$.

\begin{theo} \label{theo:RR}
For any $ k, \ell \in \N$ such that $ 0< \ell \leqslant k$ and $\ell$ is even, we have 
$$ N^{g, k  + 2 }_{\ell +1} = \int_{ \mo ( \Si , s)} e^{  \frac{k}{2 \pi} \om_s } \op{Todd}_s $$
where $s = \ell / k$, $\om_s$ is the symplectic form of $\mo ( \Si, s)$ and $\op{Todd}_s$ any representant of its Todd class  
\end{theo}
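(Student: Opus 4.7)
The plan is to recognize $N^{g,k+2}_{\ell+1}$ as the dimension of a space of holomorphic sections on $\mo(\Si, s)$ and then apply Hirzebruch--Riemann--Roch in its simplest form, so the proof is essentially an invocation of two well-known external results packaged together.

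First I would use the geometric quantization description of the conformal block. By the Mehta--Seshadri correspondence, $\mo(\Si, s)$ is naturally identified with the moduli space of parabolic vector bundles on $\overline{\Si} = \Si\cup_C D$ with a single parabolic point (the center of $D$) of weight $s=\ell/k$. This moduli space is smooth and projective, and it carries a natural ample line bundle $\mathcal{L}$ (the ``theta'' or determinant line bundle), whose Chern class satisfies $c_1(\mathcal{L}) = [\om_s]/(2\pi)$. The parity condition on $\ell$ ensures that the relevant color $\ell+1$ is odd, i.e.\ the representation factors through $\op{SO}(3)$, matching the choice of $\op{SU}(2)$-moduli space used here. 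Then, by the Beauville--Laszlo--Faltings theorem (the non-abelian theta function identification of conformal blocks with global sections of $\mathcal{L}^k$), one has
\begin{equation*}
 N^{g,k+2}_{\ell+1} = \dim H^0\bigl(\mo(\Si, s), \mathcal{L}^k\bigr).
\end{equation*}

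Second, I would argue that higher cohomology vanishes. Since $\mathcal{L}$ is ample on the smooth projective variety $\mo(\Si, s)$ and $k\geqslant 1$, the Kodaira vanishing theorem (or more precisely its extension ensuring $H^i(\mathcal{L}^k)=0$ for $i>0$, which is proved in this setting by Kumar--Narasimhan--Ramanathan) yields
\begin{equation*}
 \dim H^0\bigl(\mo(\Si, s), \mathcal{L}^k\bigr) = \chi\bigl(\mo(\Si, s), \mathcal{L}^k\bigr).
\end{equation*}
For $s=1$, the moduli space has smaller dimension $2(3g-3)$ but is still smooth projective with the same ampleness properties, so the same argument applies.

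Third, I would apply Hirzebruch--Riemann--Roch:
\begin{equation*}
 \chi\bigl(\mo(\Si, s), \mathcal{L}^k\bigr) = \int_{\mo(\Si, s)} \op{ch}(\mathcal{L}^k) \cdot \op{Todd}\bigl(T\mo(\Si, s)\bigr) = \int_{\mo(\Si, s)} e^{k c_1(\mathcal{L})} \cdot \op{Todd}_s,
\end{equation*}
and substitute $c_1(\mathcal{L}) = [\om_s]/(2\pi)$ to obtain the stated formula. The ambiguity in the choice of representative of the Todd class is harmless because $\mathcal{L}^k$ and its Chern character are genuine forms and only the total integral matters.

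The main obstacle is really at the first step: making precise the identification of $N^{g,k+2}_{\ell+1}$ with $\dim H^0(\mo(\Si,s), \mathcal{L}^k)$ requires a careful bookkeeping of conventions -- the shift $k \leftrightarrow k+2$ (level vs.\ WRT index), the color vs.\ representation dimension, and the parameter normalization $s = \ell/k$ giving trace $2\cos(2\pi s)$ -- and the invocation of results from the theory of parabolic bundles (Mehta--Seshadri, Pauly, Laszlo--Sorger, Beauville--Laszlo, Faltings) which are nontrivial but standard in this context. Once that identification is granted, the vanishing theorem and Riemann--Roch are routine.
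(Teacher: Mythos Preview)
The paper does not actually prove this theorem; it is stated in Section~\ref{sec:symplectic-volumes} as a known result and then used to derive the corollary on symplectic volumes. Your outline is precisely the standard argument behind it: identify the Verlinde number with $\dim H^0(\mo(\Si,s),\mathcal{L}^k)$ via Mehta--Seshadri and the conformal-blocks/theta-functions theorems (Beauville--Laszlo, Faltings, Pauly for the parabolic case), kill higher cohomology with an ampleness/vanishing argument, and apply Hirzebruch--Riemann--Roch. This is correct and is exactly the chain of external results the paper is implicitly invoking; there is nothing to compare against, since the paper offers no alternative proof.

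One small remark: your comment that the parity of $\ell$ makes the representation factor through $\op{SO}(3)$ is a bit tangential to the logic here. The real role of ``$\ell$ even'' is that $s=\ell/k$ is then a rational weight with even numerator, which is what one needs for the parabolic line bundle $\mathcal{L}^k$ to be well defined (integrality of the parabolic degree) and for the identification with the $\op{SU}(2)$ moduli space $\mo(\Si,s)$ rather than a twisted version. The factorization through $\op{SO}(3)$ is a symptom of this, not the mechanism. Also, for $s=1$ (i.e.\ $\ell=k$) the space $\mo(\Si,1)$ is the moduli of bundles with fixed odd determinant, which is indeed smooth of dimension $2(3g-3)$, so your endpoint remark is fine.
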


As a corollary, we can compute the polynomial function $P_{g,0}$ as a symplectic volume. 

\begin{cor} For any $s \in ( 0,1)$, we have 
$$  P_{g,0} ( s) = \int_{\mo ( \Si, s)}  \frac{ \om_s ^{3g-2}}{ ( 3g -2)!} .$$
\end{cor}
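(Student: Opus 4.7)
The plan is to compare the two independent descriptions of $N^{g,k+2}_{\ell+1}$ given by Theorem \ref{theo:RR} and Theorem \ref{theo:counting_smoot}, and to identify them at the leading order in $k$.

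First I would expand the Riemann-Roch integrand. For $s \in (0,1)$ the moduli space $\mo(\Si,s)$ has real dimension $2(3g-2)$, so
$$ \int_{\mo(\Si,s)} e^{\frac{k}{2\pi} \om_s} \op{Todd}_s = \Bigl( \frac{k}{2\pi} \Bigr)^{3g-2} \int_{\mo(\Si,s)} \frac{\om_s^{3g-2}}{(3g-2)!} + \bigo(k^{3g-3}), $$
because only components of total degree $6g-4$ contribute, and $\op{Todd}_s$ has constant term $1$, so lower powers of $\om_s$ only produce corrections of order $k^{3g-3}$ or smaller. Combined with Theorem \ref{theo:RR} this gives, for $s = \ell/k$ with $\ell$ even and $0 < \ell \leqslant k$,
$$ N^{g,k+2}_{\ell+1} = \Bigl( \frac{k}{2\pi} \Bigr)^{3g-2} \int_{\mo(\Si,s)} \frac{\om_s^{3g-2}}{(3g-2)!} + \bigo(k^{3g-3}). $$

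Next, I would apply Theorem \ref{theo:counting_smoot} to the same number $N^{g,k+2}_{\ell+1}$, which is legitimate since $\ell+1$ is an odd integer in $[1, k+1]$. This yields
$$ N^{g,k+2}_{\ell+1} = \Bigl( \frac{k+2}{2\pi} \Bigr)^{3g-2} P_{g,0} \Bigl( \frac{\ell+1}{k+2} \Bigr) + \bigo(k^{3g-4}). $$
Fix $s_0 \in (0,1)$ and choose even integers $\ell_k$ with $\ell_k / k \to s_0$ as $k \to \infty$; then $(\ell_k+1)/(k+2) \to s_0$ as well. Dividing both expressions of $N^{g,k+2}_{\ell_k+1}$ by $(k/2\pi)^{3g-2}$ and passing to the limit, the continuity of $P_{g,0}$ (it is a polynomial) together with the continuous dependence of the symplectic volume $\int_{\mo(\Si,s)} \om_s^{3g-2}/(3g-2)!$ on $s \in (0,1)$ (already used in the paper under the name $v_g$) gives
$$ P_{g,0}(s_0) = \int_{\mo(\Si,s_0)} \frac{\om_{s_0}^{3g-2}}{(3g-2)!}, $$
which is the claim since $s_0 \in (0,1)$ is arbitrary.

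The only real subtlety is the smooth (or even just continuous) dependence of the symplectic volume on $s$: this requires identifying $\mo(\Si,s)$ for varying $s$ with a smooth family of symplectic manifolds, which is standard and is implicit in the very definition of the function $v_g$ used in Theorem \ref{theo:main-result}. Once this point is granted, everything else is just matching leading coefficients in $k$, and no delicate analysis is needed beyond what is already in Theorem \ref{theo:RR} and Theorem \ref{theo:counting_smoot}.
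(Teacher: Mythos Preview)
Your argument is correct and is exactly the intended one: the paper states the corollary without proof, merely saying ``As a corollary, we can compute the polynomial function $P_{g,0}$ as a symplectic volume,'' so the proof amounts to matching the leading $k^{3g-2}$ coefficients of Theorem \ref{theo:RR} and Theorem \ref{theo:counting_smoot}, which is precisely what you do. The only nontrivial ingredient beyond those two theorems is the continuity in $s$ of the symplectic volume, and you have identified and addressed that point correctly.
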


We can actually recover partially  Theorem \ref{theo:counting_smoot} in this way. Introduce the space $\mo ( \Si)$ of conjugacy classes of morphisms from $\pi_1( \Si)$ to $\op{SU}(2)$. Let $f : \mo ( \Si) \rightarrow \R$ be the function sending $\rho$ into $\frac{1}{\pi} \arccos ( \op{tr }( \rho (C)))$. Then for each $s \in [0,1]$, $\mo ( \Si , s)$ is the fiber at $s$ of $f$. Furthermore $(0,1)$ is the set of regular values of $f$. So we can identify the $\mo ( \Si , s)$, $ s\in (0,1)$ to a fixed manifold $F$, by a diffeomorphism uniquely defined up to isotopy. In particular, the homology groups of $\mo ( \Si , s)$ are naturally identified with the ones of $F$. 

In \cite{Je2}, Jeffrey introduced an extended moduli space $\mo ^{ \mathfrak{t}} ( \Si)$. This space is a $2(3g-1)$-dimensional $( \R / \Z)$-Hamiltonian space, such that for any $s \in ( 0,1)$, $\mo ( \Si ,s )$ is the symplectic reduction of $\mo ^{\mathfrak{t}} ( \Si)$ at level $s$. We recover that the various $\mo ( \Si, s)$, $s \in (0,1)$ can be naturally identified with a fixed manifold $F$ up to isotopy. 

Furthermore, by Duistermaat-Heckman Theorem \cite{DuHe}, the cohomology class of $\om_s$ is an affine function of $s$ with value in $H^2(F)$, that is $[\om_s] = \Om + s c$, where $\Om$ and $c$ are constant cohomology classes in $H^2(F)$.
 This implies in particular that $P_{g,0}$ is polynomial with degree $(3g -2)$.

We can explain in this way why the shifts of $\ell$ and $k$ we introduced are natural. Indeed it has been proved by Meinrenken-Woodward \cite{MeWo2} that the canonical class $c_1$ of $\mo ( \Si , s)$ is $-4 \Om -2 c$. Using that $\op{Todd} = \hat{A} e^{-\frac{1}{2} c_1} $ where $\hat{A}$ is the $A$-genus, we obtain the that for any $k, \ell \in \N$ such that $ 0< \ell < k$ and $\ell$ is odd, 
$$  N^{g,k}_{\ell} = \int_{\mo ( \Si , s)} e^{  \frac{k}{2 \pi} \om_s } \hat{A}, \qquad \text{ with } s = \ell / k . $$
Since the $\hat{A}$-genus belong to $\bigoplus_{\ell} H^{4\ell}(F) $, it follows that
$$ Q( k ,s) = \int_{\mo ( \Si , s)} e^{  \frac{k}{2 \pi} \om_s } \hat{A} $$ 
is  a linear combination of the monomial $k^{2m} s ^p$ with $0 \leqslant p \leqslant 2m$ and $0 \leqslant m \leqslant g-1$, which was already proved in Theorem \ref{theo:counting_smoot}. 

\subsection{Character varieties} 

For any topological space $V$, introduce the character variety $\mathcal{M} ( V) $ defined as the space of group morphisms from $\pi_1 (V)$ to $\op{SU}(2)$ up to conjugation. 
If $W$ is a subspace of $V$, we have a natural map from $\mathcal{M} (V)$ to $\mathcal{M} (W)$, that we call the restriction map. 

For the circle $C$, $\pi_1 (C)$ being cyclic, $\mathcal{M} (C)$ identifies with the set of conjugacy classes of $\op{SU} (2)$. So $\mathcal{M} (C) \simeq [0,\pi]$ by the map sending the morphism $\rho$ to the number $\arccos ( \frac{1}{2} \op{tr} \rho (C) )$. Similarly, $\mo (S^1) \simeq [0,\pi]$. 

For the two-dimensional torus $C \times S^1$, there is a natural bijection between $\mathcal{M}(C \times S^1)$ and the quotient of $H_1 (C \times S^1, \R)$ by $H_1(C \times S^1) \rtimes \Z_2$ defined as follows. Identify $\pi_1 ( C \times S^1)$ with $H_1 ( C \times S^1)$ and denote by $\cdot$ the intersection product of $H_1( C \times S^1)$. Then to any $x \in H_1 ( C \times S^1, \R)$ we associate the representation $\rho_x$ given by 
\begin{gather} \label{eq:defrhox}
 \rho _x ( \ga) = \exp ( (x \cdot \ga) D), \qquad \forall \ga \in H_1( C \times S^1)\end{gather}  
where  $D \in \op{SU}(2)$ is the diagonal matrix with entries $2i \pi$, $-2 i\pi$. 

Recall that we denote by $M$ the quotient of $H_1 ( C \times S^1 , \R)$ by $H_1 ( C \times S^1)$ and by $N$ the quotient of $M$ by $- \op{id}_M$, cf. (\ref{eq:def_E}) and (\ref{eq:def_N}). So the map sending $x$ to $\rho_x$ induces a bijection between $N$ and $\mathcal{M} ( C \times S^1)$. Furthermore the restriction maps from $\mathcal{M} ( C \times S^1)$ to $\mathcal{M} ( C)$ and $\mathcal{M} ( S^1)$ identify respectively with the maps $\be$ and $\al$ introduced in (\ref{eq:def_be}). 

Recall that we introduced subsets $A_1$, $A_2$, $A= A_1 \cup A_2$ and $B$ of $M$. We denote by $\tilde{A}_1$, $\tilde{A}_2$, $\tilde{A}$ and $\tilde B$ their projections in $N$. So $\tilde{A}_1$ and $\tilde{A}_2$ consists respectively of the classes $[\rho] \in \mo ( C\times S^1)$ such that $\rho (C) = \id$ or $\rho ( S^1) = \pm \id$.  In other words $\tilde{A}_1  = \beta^{-1} (0)$ and $\tilde{A}_2 = \alpha^{-1} ( \{ 0,\pi \} )$.

\begin{lem} \label{lem:mapf}
The image of the restriction map $f$ from $\mo ( \Sigma \times S^1)$ to $\mo ( C \times S^1)$ is $\tilde{A}$. For any $x \in \tilde{A}_1 \setminus \tilde{A}_2$, $f^{-1}(x)$ identifies with $\op{Mor} ( \pi_1 ( \Si), \R /\Z)$. For any $x \in \tilde{A}_2$, $f^{-1} (x)$ identifies with $\mathcal{M} ( \Si, \be (x) /\pi)$.
\end{lem}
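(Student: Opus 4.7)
The plan is to exploit the product decomposition $\pi_1(\Si \times S^1) \cong \pi_1(\Si) \times \Z$, where the second factor is generated by the class of $S^1$. A morphism $\rho : \pi_1(\Si \times S^1) \to \op{SU}(2)$ then amounts to a pair $(\rho_1, A)$ consisting of $\rho_1 : \pi_1(\Si) \to \op{SU}(2)$ and an element $A = \rho(S^1) \in \op{SU}(2)$ commuting with the image of $\rho_1$, and the restriction $f$ simply reads off $(\rho_1(C), A)$.

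First I would determine the image of $f$ using the structural dichotomy in $\op{SU}(2)$: the centralizer of a non-abelian subset is $\{\pm\op{id}\}$, while the centralizer of a non-central element is a uniquely determined maximal torus. If $\rho_1$ is irreducible, commutation forces $A \in \{\pm\op{id}\}$, so $\al(f([\rho])) \in \{0,\pi\}$ and the image lies in $\tilde{A}_2$. If $\rho_1$ is abelian, it factors through $H_1(\Si)$; since $C$ is a product of commutators in $\pi_1(\Si)$, $[C] = 0$ in $H_1(\Si)$, hence $\rho_1(C) = \op{id}$ and the image lies in $\tilde{A}_1$. Conversely, every $x \in \tilde{A}_2$ is realized by pairing any $\rho_1 \in \mo(\Si,\be(x)/\pi)$ with the central element $A = \pm\op{id}$ prescribed by $\al(x)$, and every $x \in \tilde{A}_1 \setminus \tilde{A}_2$ by choosing an abelian $\rho_1$ valued in the maximal torus containing $A$.

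Next I would describe the fibers by fixing a representative $\rho_0$ of $x$: an element of $f^{-1}(x)$ is a class of $\rho$ with $\rho|_{\pi_1(C \times S^1)} = \rho_0$, modulo conjugation by the stabilizer $\op{Stab}(\rho_0) \subset \op{SU}(2)$. When $x \in \tilde{A}_1 \setminus \tilde{A}_2$, the element $A = \rho_0(S^1)$ is non-central while $\rho_0(C) = \op{id}$, so $\op{Stab}(\rho_0)$ is the maximal torus $T \simeq \R/\Z$ containing $A$; the commutation then forces $\rho_1$ into $T$, the condition $\rho_1(C) = \op{id}$ is automatic from $[C] = 0 \in H_1(\Si)$, and $T$-conjugation acts trivially on $\op{Mor}(\pi_1(\Si),T)$, yielding $f^{-1}(x) = \op{Mor}(\pi_1(\Si),\R/\Z)$. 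When $x \in \tilde{A}_2$, $A = \pm\op{id}$ is central and imposes no commutation constraint, so the fiber is the set of $\rho_1 : \pi_1(\Si) \to \op{SU}(2)$ with $\rho_1(C) = \rho_0(C)$ modulo the centralizer of $\rho_0(C)$; a standard check (every $\op{SU}(2)$-conjugacy class in $\mo(\Si,\be(x)/\pi)$ admits a representative with $\rho_1(C) = \rho_0(C)$, and two such are equivalent precisely modulo that centralizer) identifies this with $\mo(\Si,\be(x)/\pi)$.

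The main subtlety will be bookkeeping the residual conjugation action. In the case $x \in \tilde{A}_1 \setminus \tilde{A}_2$ one must verify that the Weyl element normalizing $T$ does \emph{not} stabilize $\rho_0$ --- it inverts $A$ --- so no further quotient by the Weyl group enters and the fiber really is $\op{Mor}(\pi_1(\Si), T)$ set-theoretically. Likewise, the ambiguity in the choice of representative coming from the $-\op{id}_M$ quotient defining $N$ has to be shown to be absorbed by the intrinsic $\op{SU}(2)$-invariant definitions of $\op{Mor}(\pi_1(\Si), \R/\Z)$ (insensitive to inverting the target torus) and of $\mo(\Si, \cdot)$ (depending only on the conjugacy class of $\rho_1(C)$), which is the point at which the argument is most delicate.
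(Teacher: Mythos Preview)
Your argument is correct and follows essentially the same route as the paper: decompose $\pi_1(\Si\times S^1)\cong\pi_1(\Si)\times\Z$, split into cases according to whether $\rho(S^1)$ is central (equivalently, whether $\rho_1$ is forced to land in a maximal torus), and read off the image and fibers accordingly. If anything, you are more careful than the paper about the residual conjugation action --- the paper simply asserts that a pair $(g,\rho')$ determines a unique class, whereas you explicitly rule out the Weyl element and track the stabilizer, which is a genuine improvement in rigor.
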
  

\begin{proof} Let $\mathbb{T}$ be the subgroup of $\op{SU}(2)$ consisting of diagonal matrices. So $\mathbb{T} \simeq \R / \Z$. We will use that for any $g \in \mathbb{T} \setminus \{ \pm \op{id} \}$, the centralizer of $g$ in $\op{SU}(2)$ is $\mathbb{T}$. 

Let $\rho $ be a morphism from $\pi_1 ( \Si \times S^1) =\pi_1 ( \Si ) \times \pi ( S^1)$ to $\op{SU}(2)$. The restriction $\rho'$ of $\rho $ to $\Si$ commutes with $\rho (S^1)$. Conjugating $\rho$ if necessary, $ g = \rho ( S^1)$ belongs to $\mathbb {T}$. Consider the following two cases: 
\begin{itemize} 
\item If $g$ is not central, the image of $\rho'$ is contained in $\mathbb{T}$. This implies that $\rho ( C) = \op{ \id}$ so that $f ( \rho ) \in \tilde{A}_1$. Conversely, any $g \in \mathbb{T} \setminus \{ \pm \op{id} \}$ and $\rho ' \in \op{Mor} ( \pi_1 ( \Si), \mathbb{T} )$ determines a unique $\rho \in \mathcal{M} ( C \times S^1)$. 
\item If $g$ is central, then $f( \rho) \in \tilde{A}_2$. 
Conversely, any $g = \pm \id$ and $\rho' \in \mathcal{M} ( \Si )$ determine a unique $\rho \in \mathcal{M} ( C \times S^1)$. 
\end{itemize} 
To end the proof in the second case,  we view $\mathcal{M} ( \Si )$ as the union of the $\mathcal {M} ( \Si , t) $ where $t$ runs over $[0,1]$. 
\end{proof} 

Recall that $S$ is the Seifert manifold obtained by gluing the solid torus $D \times S^1$ to $\Sigma \times S^1$ along the diffeomorphism $\varphi$ of $C \times S^1$. Furthermore, $X = \tilde{A} \cap \tilde{B}$. 
\begin{theo} 
The components of $\mo (S)$ are in bijection with $X$. For any $x \in X$ the corresponding component is homeomorphic with $f^{-1} (x)$. 
\end{theo}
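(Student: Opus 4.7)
The plan is to realise $\mo(S)$ as (essentially) the fibre product of $\mo(\Si \times S^1)$ and $\mo(T)$ over $\mo(C \times S^1)$ via Seifert--van Kampen, then identify the fibres of the projection $p : \mo(S) \to \mo(C \times S^1)$ with $f^{-1}(x)$. Throughout, write $f' : \mo(T) \to \mo(C \times S^1)$ for the restriction map from the filling solid torus, so that $\tilde{B} = \op{im}(f')$.

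First, since $\pi_1(S) = \pi_1(\Si \times S^1) *_{\pi_1(C \times S^1)} \pi_1(D \times S^1)$, giving a morphism $\rho : \pi_1(S) \to \op{SU}(2)$ amounts to giving a compatible pair $(\rho_1, \rho_2)$ with matching restrictions to $\pi_1(C \times S^1)$ via $\varphi$. Passing to conjugacy classes, the restriction map $p$ is well-defined, and its image is exactly the set of $x \in \mo(C \times S^1)$ admitting such a pair, namely $\tilde{A} \cap \tilde{B} = X$.

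Second, I would identify $p^{-1}(x)$ with $f^{-1}(x)$ for each $x \in X$. Fix a representative $\rho_0$ of $x$. Any element of $p^{-1}(x)$ has a representative $\rho$ whose restriction to $\pi_1(C \times S^1)$ equals $\rho_0$ on the nose, and two such representatives differ by the diagonal conjugation action of the stabiliser $G_0 := \op{Stab}(\rho_0) \subset \op{SU}(2)$. The crucial point is that the inclusion $\partial D \hookrightarrow D$ induces a surjection $\pi_1(C \times S^1) \twoheadrightarrow \pi_1(D \times S^1)$, whose kernel is generated by the meridian $\nu = a\mu + b\la$; since $x \in \tilde{B}$ forces $\rho_0(\nu) = \op{id}$, the second factor $\rho_2$ is uniquely and canonically determined by $\rho_0$. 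Hence $p^{-1}(x)$ is the quotient by $G_0$ of the set of $\rho_1 : \pi_1(\Si \times S^1) \to \op{SU}(2)$ with $\rho_1|_{\pi_1(C \times S^1)} = \rho_0$, which by a standard slice argument is canonically homeomorphic to $f^{-1}(x)$.

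Third, I would pass from fibres to components using Lemma~\ref{lem:mapf}. The set $X$ being finite and discrete in $\mo(C \times S^1)$, the components of $\mo(S)$ are disjoint unions of fibres $p^{-1}(x)$. But Lemma~\ref{lem:mapf} identifies each $f^{-1}(x)$ either with the $2g$-torus $\op{Mor}(\pi_1(\Si), \R/\Z)$ or with a moduli space $\mo(\Si, \be(x)/\pi)$, both of which are connected. Therefore each $p^{-1}(x) \simeq f^{-1}(x)$ is a single connected component of $\mo(S)$, giving the desired bijection.

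The main obstacle is the careful handling of the four orbifold points $p_1, \ldots, p_4$, where $\rho_0 \in \{\pm \op{id}\}^{\{C, S^1\}}$ and the stabiliser $G_0$ jumps to all of $\op{SU}(2)$. At these strata (which correspond to $X_3 \cup X_4$ and mix abelian with irreducible representations, as recorded in Table~\ref{tab:xi}), the action of $G_0$ on the set of representatives is no longer free and the slice argument above must be reworked. One then verifies directly, using the standard presentation of $\pi_1(\Si)$, that the quotient still agrees with $\mo(\Si, 0)$ or $\mo(\Si, 1)$ and that these spaces remain connected after inclusion of their reducible strata, which is a classical fact.
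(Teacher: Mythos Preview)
Your proof is correct and follows the same broad strategy as the paper (Van Kampen plus the connectedness of the fibres of $f$ from Lemma~\ref{lem:mapf}), but the paper streamlines it considerably. Because $\pi_1(C\times S^1)\to\pi_1(D\times S^1)$ is surjective, Van Kampen presents $\pi_1(S)$ directly as the \emph{quotient} of $\pi_1(\Si\times S^1)$ by the normal closure of $\varphi(\partial D)$; hence the restriction map $\mo(S)\to\mo(\Si\times S^1)$ is injective with image exactly $f^{-1}(\tilde B)$, and the identification of the components of $\mo(S)$ with the fibres $f^{-1}(x)$, $x\in X$, is immediate. This bypasses the fibre-product formalism and, in particular, your stabiliser discussion: the set-theoretic identification $p^{-1}(x)=f^{-1}(x)$ holds uniformly (both are the quotient of the same set $\{\rho_1:\rho_1|_{\pi_1(C\times S^1)}=\rho_0\}$ by the same $G_0$-action, regardless of whether that action is free), so the obstacle you flag in your last paragraph does not actually arise.
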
 

\begin{proof}  It follows from Van Kampen Theorem that $\pi_1 (S)$ is the quotient of $\pi_1 ( \Si \times S^1)$ by the subgroup generated by $\varphi (C) $. So the group morphisms from $\pi_1 (S) $  identify with the group morphisms from $\pi ( \Si \times S^1)$ sending $\varphi ( C)$ to the identity. 

On the other hand, for any $x \in H_1 ( C \times S^1, \R)$ the corresponding representation $\rho_x$ defined in (\ref{eq:defrhox}) is trivial on $\varphi(C)$ if and only if $x$ belongs to the line generated by $ \nu = a \mu + b \la$. So $\tilde{B}$ consists of the conjugacy classes of representations which are trivial on $\varphi(C)$.  

This implies that the restriction map from $\mo (S)$ to $\mo ( \Si \times S^1)$ is injective, and its image is $f^{-1} (\tilde{B})$. The conclusion follows from Lemma \ref{lem:mapf}, taking into account that the fibers of $f$ are connected. 
\end{proof} 

\subsection{Chern-Simons invariant} 

For any three-dimensional closed oriented manifold $V$ and $\rho \in \mo (V)$ the Chern-Simons invariant of $\rho$ is defined by
\begin{gather} \label{eq:defCS}
 \op{CS}( \rho) = \int _V \tfrac{2}{3} \al^3 + \al \wedge d\al \in \R / 2 \pi \Z
\end{gather} 
 where $\al \in \Om ^1(V, \mathfrak{su} (2))$ is any connection form whose holonomy representation is $\rho$.

\begin{theo} 
For any $ \rho \in \mo ( S)$, the Chern-Simons invariant of $\rho$ is given by
$$ e^{ i \op{CS} ( \rho ) } = \bigl\langle \Theta_A (x) , \Theta_B (x) \rangle  $$
where $x  \in \mo ( C \times S^1)$ is the restriction of $\rho $ to $C \times S^1$. 
\end{theo}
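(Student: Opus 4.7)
The approach is to identify the prequantum bundle $L_M$ with the Chern-Simons line bundle over $\mo(C \times S^1)$, and then interpret $\Theta_A$ and $\Theta_B$ as the Chern-Simons sections associated to $\Si \times S^1$ and ${T} = D \times S^1$ respectively. Once this is done, the theorem reduces to the standard multiplicativity of $\op{CS}$ under gluing of three-manifolds along a common boundary.

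First, I would recall the construction of the Chern-Simons line bundle $\mathcal{L} \to \mo(C \times S^1)$: its fiber at $x$ is generated by equivalence classes $[V,A]$, where $V$ is a compact oriented 3-manifold bounded by $C \times S^1$ and $A$ is a flat $\op{SU}(2)$-connection on $V$ with boundary holonomy $x$, modulo the relation $[V_1, A_1] \sim e^{i\op{CS}(V_1 \cup \con{V_2}, A_1 \cup A_2)} [V_2, A_2]$. The variational formula $\delta\op{CS}(A) = 2 \int_{\partial V} \op{tr}(\delta A \wedge A)$ for Chern-Simons with boundary shows that the natural connection on $\mathcal L$ has connection 1-form precisely $\al_x(y) = \tfrac{1}{2}\om(x,y)$, with $\om$ equal to $4\pi$ times the intersection form on $H_1(C \times S^1, \R)$. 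Fixing the isomorphism $\mathcal L \simeq L_M$ so that the class of the trivial pair at the origin corresponds to $s(0) = 1$, the two bundles are canonically identified as Hermitian line bundles with connection.

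Second, for any compact oriented 3-manifold $W$ with $\partial W = C \times S^1$, the assignment $\rho \mapsto [W, A_\rho]$ (with $A_\rho$ any flat connection representing $\rho$) defines a section $\Xi_W$ of $\mathcal L$ over the image of the restriction map $\mo(W) \to \mo(C \times S^1)$; it is well-defined because changing $A_\rho$ by a gauge transformation alters $\op{CS}$ by a multiple of $2\pi$, and it is flat by the same variational formula. Applying this to $W = \Si \times S^1$ yields a flat section over $\tilde A$ equal to $1$ at the origin, and thus identified with $\Theta_A$ by the uniqueness of such sections; likewise $\Xi_{D \times S^1}$ is identified with $\Theta_B$.

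Finally, given $\rho \in \mo(S)$ with boundary restriction $x$, choose a flat connection $A$ on $S$ representing $\rho$ and set $A_1 = A|_{\Si \times S^1}$, $A_2 = A|_{D \times S^1}$. Cutting the integral in (\ref{eq:defCS}) along $C \times S^1$ gives
$$ \op{CS}(\rho) \equiv \op{CS}(\Si \times S^1, A_1) + \op{CS}(D \times S^1, A_2) \mod 2\pi \Z, $$
which upon exponentiation is exactly the pairing relation on $\mathcal L$ between the classes $[\Si \times S^1, A_1]$ and $[D \times S^1, A_2]$. Through Step 2 this reads $e^{i\op{CS}(\rho)} = \langle \Theta_A(x), \Theta_B(x) \rangle$, as claimed.

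The main obstacle is the identification carried out in the first step. One must carefully match the Chern-Simons connection on $\mathcal L$ with $d + \frac{1}{i}\al$ on $L_M$, including all normalization constants, which requires unpacking the Atiyah-Bott symplectic form on the character variety of the torus $C \times S^1$ and comparing it with the convention $\om = 4\pi \times$ intersection used throughout the paper. One must also check that the descent of $\mathcal L$ under the involution $-\id_M$ used to define $N = \mo(C \times S^1)$ is compatible with that of $L_M$, so that $\Theta_A$ and $\Theta_B$ really descend to the correct sections over $\tilde A$ and $\tilde B$.
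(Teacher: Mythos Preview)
Your proposal is correct and follows essentially the same route as the paper's own proof sketch: identify the prequantum bundle $L_M$ with the pull-back of the Chern-Simons line bundle, recognize $\Theta_A$ and $\Theta_B$ as the unique flat sections equal to $1$ at the origin (hence equal to the relative Chern-Simons sections of $\Si \times S^1$ and $D \times S^1$), and conclude via the gluing formula. The paper cites \cite{RaSiWe}, \cite{Fr}, and \cite{LJ2} for the three ingredients you spell out, and your additional remarks on normalizations and on descent under $-\op{id}_M$ are exactly the points one would need to fill in to turn the sketch into a complete argument.
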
 

The proof is based on the relative Chern-Simons invariants introduced in \cite{RaSiWe}, cf. also \cite{Fr}.  
\begin{proof}[proof (sketch)] We can define a relative Chern-Simons invariant for compact oriented 3-manifold $V$ with boundary. To do this we define first a complex line bundle $L \rightarrow \mathcal{M} ( \partial V)$, called the Chern-Simons bundle. Then for any $\rho \in \mo ( V)$, $e^{i \op{CS} (\rho)}$ is by definition a vector in $L_{r(\rho)}$ where $r$ is the restriction map from $\mo ( V)$ to $\mo ( \partial V)$. This invariant has the three following properties:
\begin{itemize} 
\item The fiber of $L$ at the trivial representation has a natural trivialization. If $\rho \in \mo (V)$ is the trivial representation, then  $e^{ i \op{CS} ( \rho)}  =1$ in this trivialization.
\item $L$ has a natural connection, and the section of $r^* L$ sending $\rho$ into $e^{i \op{CS} ( \rho)}$ is flat. 
\item If $V$ is closed and obtained by gluing two manifolds $V_1$ and $V_2$ along the common boundary, then for any $\rho \in \mo (V)$, 
$$e^{i \op{CS} ( \rho) } = \bigl\langle e^{i\op{CS} ( \rho_1)}, e^{i \op{CS} ( \rho_2)} \bigr\rangle$$ where $\rho_1$ and $\rho_2$ are the restrictions of $\rho$ to $V_1$ and $V_2$ respectively. 
\end{itemize} 
In our case, the pull-back of the Chern-Simons bundle of $\mo ( C \times S^1)$ by the projection $ M \rightarrow \mo ( C \times S^1)$ is the prequantum bundle $L_M$, cf. \cite{LJ2}. Furthermore the image of the restriction maps from $\mo ( D \times S^1)$ and $\mo (\Si \times S^1)$ to $\mo ( C \times S^1)$ are respectively $\tilde{A}$ and $\tilde{B}$. We conclude by lifting everything to $M$ and by using that $\Theta_A$ and $\Theta_B$ are flat and satisfy $\Theta_A ( 0) = \Theta_B (0)$.  
\end{proof}

\bibliographystyle{alpha}
\bibliography{biblio}

\end{document}